\def\NZQ{\mathbb}               
\def\ZZ{{\NZQ Z}}
\def\Z{{\NZQ Z}}
\def\v{{\bf v}}
\def\w{{\bf w}}
\def\u{{\bf u}}
\def\Um{{\operatorname{Um}}}          
\def\E{{\operatorname{E}}}            
\def\ESp{{\operatorname{ESp}}}          
\def\GL{{\operatorname{GL}}}          
\def\K{{\operatorname{K}}}            
\def\M{{\operatorname{M}}}            
\def\MSE{{\operatorname{MSE}}}        
\def\SL{{\operatorname{SL}}}          
\def\SK{{\operatorname{SK}}}          
\def\Sp{{\operatorname{Sp}}}          
\def\Um{{\operatorname{Um}}}          
\def\vpmod{\!\!\!\pmod} 
\def\WMS{{\operatorname{WMS}}}     
\def\SK{{\operatorname{SK}}}
\newtheorem{theorem}{Theorem}[section]
\newtheorem{lemma}[theorem]{Lemma}
\newtheorem{corollary}[theorem]{Corollary}
\newtheorem{proposition}[theorem]{Proposition}
\newtheorem{remark}[theorem]{Remark}
\newtheorem{definition}[theorem]{Definition}
\newtheorem{question}[theorem]{Question}
\newtheorem*{acknowledgement}{Acknowledgement}
\begin{document}

\title{\small Optimal injective stability for the 
symplectic {\bf $\K_1\Sp$} group}

\author{{\bf Anjan Gupta}\\
Tata Institute of Fundamental Research, Mumbai }

\footnote{Correspondence author: Anjan Gupta; 
{\it email: anjan@math.tifr.res.in, agmath@gmail.com}}

\maketitle

\noindent {\it Mathematics Subject Classification 13C10, 13H05, 19B14, 19B99.}

\vskip3mm

\noindent {\it Key words: unimodular rows, elementary symplectic group, injective stability.}
\subjclass{}

\vskip3mm

\begin{abstract} 

If $R$ is a commutative ring, $I$ an ideal of $R$ and $\v, \w \in \Um_{2n}(R, I)$ then we show that $\v, \w$ are in the same orbit of elementary action if and only if they are in that of elementary symplectic action extending a result in \cite{cr}. We also show that if $A$ is a non-singular affine algebra of dimension $d$ over an algebraically closed field $k$ such that $d! A = A$, $d \equiv 2 \vpmod
4$ and $I$ an ideal of $A$, then $\Um_d(A, I) = e_1{\Sp}_d(A, I)$. As a consequence it is proved that if $A$ is a non-singular affine algebra of dimension $d$ over an algebraically closed field $k$ such that $(d + 1)!A = A$, $d \equiv 1 \vpmod 4$ and $I$ a principal ideal then
$\Sp_{d-1}(A, I) \cap {\ESp}_{d+1}(A, I) = {\ESp}_{d -1}(A, I)$. We give an example to show that the above result does not hold true for an affine algebra over a $C_2$ field and also show by an example
that the above stability estimate is optimal.
\end{abstract}

\section{Introduction}

We shall confine ourselves to working over a commutative ring $R$ with
$1$, which is generally an affine algebra over an algebraically closed
field $k$.

Historically, Bass--Milnor--Serre in \cite{bms} observed that
$K_1(R) = \lim GL_n(R)/E_n(R)
$ is a finite limit, attained when $n \geq d + 3$, where $d$ is the Krull
dimension of $R$. They conjectured that the correct estimate should be
max$\{3, d + 2\}$, which was established by L.N. Vaserstein in \cite{V}.
Vaserstein went on to show that a similar phenomenon occurs for the
symplectic and orthogonal $K_1$ functors; and established estimates
$\geq 2d + 4$ for them.

A.A. Suslin started improving the estimates of J-P Serre and Hyman
Bass for $K_0$, by working over affine algebras over a field of cohomological
dimension at most one (under certain divisibility conditions on the field;
which we shall blur over in the introduction). This led Ravi A. Rao and
W. van der Kallen in \cite{iis}
to question whether a similar phenomenon would occur for injective
$K_1$-stabilization; which they established as max$\{3, d +1\}$. Later in \cite{sfmaa} it was further improved to max$\{3, d\}$ under certain divisibility conditions on the field. In this article we  prove similar estimate for relative $K_1$ (see Theorem \ref{13}).

Their methods were successfully
used in the symplectic group in the work of R. Basu--R.A. Rao in \cite{br}
to get the bound $\geq d + 1$ for the symplectic group. Along with S. Jose
in \cite{rbj} they also showed that the injective stability bound does not
improve for the orthogonal $K_1$ even over nice affine algebras.

The Basu--Rao results were reproved (and slightly improved) by
Chattopadhyay--Rao in \cite{cr}. Later Basu--Chattopadhyay--Rao in
\cite{bcr} succeeded in getting the injective estimate max$\{4, d\}$, for smooth
affine algebras over a field of cohomological dimension $\leq 1$, when
$d$ is divisible by $4$.

There was a further improvement in the stability estimate for $K_0$ in
the recent work of Jean Fasel--Ravi A. Rao--Richard G. Swan in \cite{sfmaa},
over a smooth affine algebra over an algebraically closed field $k$. They
also showed that the injective stability estimate fell to max$\{3, d\}$.

This led us to further investigate whether the injective stability bound
for the symplectic group also fell similarly. In this article we  first show the following result of \cite{cr} extending it to the case when $ n = 2$ and removing the condition $2R = R$.

\begin{theorem}\label{eo11}
Let $R$ be a commutative ring, $I$ an ideal of $R$ and $\v \in \Um_{2n}(R, I)$. Then  $ \v\E_{2n}(R, I) = \v\ESp_{2n}(R, I)$.
\end{theorem}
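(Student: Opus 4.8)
Since $\ESp_{2n}(R,I)\subseteq\E_{2n}(R,I)$, the inclusion $\v\,\ESp_{2n}(R,I)\subseteq\v\,\E_{2n}(R,I)$ is automatic, so the content is the reverse inclusion. My plan is to remove the ideal first, by the standard double-ring reduction, and then to concentrate on the absolute statement $\v\,\E_{2n}(R)=\v\,\ESp_{2n}(R)$ for $\v\in\Um_{2n}(R)$, which is where the work lies and where the new cases --- $n=2$, and not assuming $2$ invertible --- have to be handled.

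For the absolute statement, fix $\v\in\Um_{2n}(R)$ and $\epsilon\in\E_{2n}(R)$, and write $\epsilon=g_1g_2\cdots g_m$ as a word in the elementary generators $g_k=e_{i_k j_k}(a_k)$. Processing the letters from the left and maintaining the invariant that the current row stays in $\v\,\ESp_{2n}(R)$ reduces everything, by an immediate induction on $m$ (using that $\w\sigma\in\Um_{2n}(R)$ whenever $\sigma\in\ESp_{2n}(R)$), to the following generator step: for every $\w\in\Um_{2n}(R)$ and every $e_{ij}(a)$, one has $\w\,e_{ij}(a)\in\w\,\ESp_{2n}(R)$. Since every $e_{ij}(a)$ is conjugate, by a monomial matrix lying in $\ESp_{2n}(R)$ (a symplectic representative of the Weyl group of type $C_n$), to a generator in one of two standard positions, and such conjugators can be absorbed back into $\ESp_{2n}(R)$, the generator step reduces to two cases: (i) $\{e_i,e_j\}$ is a hyperbolic pair, and (ii) $e_i,e_j$ lie in distinct hyperbolic pairs.

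Case (i) is free, since then $e_{ij}(a)=I_{2n}+aE_{ij}$ is already a long-root elementary symplectic generator, so $\w\,e_{ij}(a)\in\w\,\ESp_{2n}(R)$ trivially. Case (ii) is the crux, and I expect it to be the main obstacle. The short-root elementary symplectic matrix that adds $aw_i$ to the $j$-th coordinate of $\w$ inevitably also adds $\pm a\,w_{j'}$ to the $i'$-th coordinate, where $i',j'$ denote the symplectic partners of $i,j$, and this spurious change must be undone; but its obvious short-root correction would disturb the $j$-th coordinate again, so a more careful route is needed. Here the unimodularity of $\w$ is indispensable: it supplies the extra freedom (concretely, a column $u$ with $\w u=1$) to replace $e_{ij}(a)$, as far as its action on $\w$ is concerned, by a longer word in the root subgroups, and the manipulations are organised by the Chevalley commutator relations of type $C_n$ --- in which a short-root element with a product parameter appears as a commutator of a short-root and a long-root element, up to a long-root factor. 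Carrying this out uniformly without assuming $2\in R^\times$ forces one to track signs through the symplectic relations rather than symmetrising, and the case $2n=4$ is tight, since there is then only one auxiliary hyperbolic pair in which to perform the corrective moves.

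Finally, the passage from the absolute statement to the relative one is routine and follows a standard pattern. Working over the double ring $D=R\times_{R/I}R=\{(r,s)\in R\times R:r\equiv s\bmod I\}$ with projections $p_1,p_2:D\to R$, one has that $p_1$ identifies $\ker p_2$ with $I$ and carries $\E_{2n}(D,\ker p_2)$ and $\ESp_{2n}(D,\ker p_2)$ onto $\E_{2n}(R,I)$ and $\ESp_{2n}(R,I)$. Applying the absolute case over $D$ to the lift $(\v,e_1)\in\Um_{2n}(D)$, projecting back along $p_1$, and using $p_2$ together with the standard generators $e_{ji}(x)e_{ij}(a)e_{ji}(-x)$ of relative elementary subgroups and the normality of $\ESp_{2n}(R,I)$ in $\Sp_{2n}(R)$ to keep the resulting symplectic word trivial modulo $I$, one obtains $\v\,\E_{2n}(R,I)=\v\,\ESp_{2n}(R,I)$.
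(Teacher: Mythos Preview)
Your approach is genuinely different from the paper's, and the place where it breaks is precisely the step you call ``routine''.

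The paper does \emph{not} reduce the relative case to the absolute one by a double-ring argument. Instead it lifts $\alpha\in\E_{2n}(R,I)$ to a polynomial word $A(X_1,\dots,X_m)$ over $R[X]$, sets $W(X)=\v A(X)$, and proves $W(X)\sim_{\ESp_{2n}(R[X])}W(0)$ by a Quillen-type local--global principle for $\ESp$-orbits (built from a dilation lemma for $\ESp_{2n}(R[X],(X))$). The only pointwise input needed is Vaserstein's absolute identity $e_1\E_{2n}=e_1\ESp_{2n}$ over local rings, where every relative unimodular row is already in the $e_1$-orbit. Specialising $X\mapsto(x_1,\dots,x_m)$ then gives an element of $\ESp_{2n}(R[X],(X))$ whose image automatically lies in $\ESp_{2n}(R,I)$. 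So the paper never has to correct an absolute $\ESp$-word into a relative one.

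Your reduction does have to, and that is where the gap lies. Over $D=R\times_{R/I}R$ you obtain $\tilde\sigma\in\ESp_{2n}(D)$ with $(\v,e_1)\tilde\sigma=(\w,e_1)$; to force $\tilde\sigma$ into $\ESp_{2n}(D,\ker p_2)$ you multiply by $s(p_2(\tilde\sigma))^{-1}$, using the retract. But this correction moves the orbit: $(\v,e_1)\cdot\tilde\sigma\,s(p_2(\tilde\sigma))^{-1}=(\w\,p_2(\tilde\sigma)^{-1},e_1)$, and $p_2(\tilde\sigma)$ fixes $e_1$, not $\w$. Pushing down you get $\w\,p_2(\tilde\sigma)^{-1}\in\v\,\ESp_{2n}(R,I)$, which is not $\w$. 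The analogous argument in the paper (Lemma~2.12) works only because there the base row is $e_1$ itself, so the correction is harmless; for general $\v$ this trick fails, and invoking ``normality of $\ESp_{2n}(R,I)$ in $\Sp_{2n}(R)$'' does not close the gap --- you would need $\ESp_{2n}(R)\cap\Sp_{2n}(R,I)=\ESp_{2n}(R,I)$, which is false in general.

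A secondary point: your absolute Case~(ii) is only sketched. That case is in fact already in Chattopadhyay--Rao for all $n\ge 2$ and all characteristics, so you could cite it; but since the absolute statement does not by itself yield the relative one (for the reason above), this would not rescue the argument.
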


The above result is used to show the following.

\begin{theorem}\label{i52}
Let $A$ be a non singular affine algebra of dimension $d$ over an algebraically closed field $k$ such that $d \equiv 2 \vpmod 4$, $\frac{1}{d!} \in k$ and $I$ an ideal of $A$. 
Let $\v \in \Um_{d}(A, I)$. Then $\v$ can be completed to a symplectic matrix which is congruent to identity modulo $I$.
\end{theorem}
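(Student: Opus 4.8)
The plan is to reduce the statement to the \emph{relative elementary completability} of $\v$, and then to run it through Theorem~\ref{eo11}. Concretely, write $d=2n$ (here $n$ is odd, since $d\equiv 2\pmod 4$) and suppose we have shown
\[
  \Um_d(A, I) = e_1\,\E_d(A, I),
\]
so that $e_1\in\v\,\E_d(A,I)$. Theorem~\ref{eo11} applies to the row $\v\in\Um_d(A,I)$ (legitimately, as $d$ is even) and gives $\v\,\E_d(A,I)=\v\,\ESp_d(A,I)$; hence there is $\tau\in\ESp_d(A,I)$ with $e_1=\v\tau$, so $\v=e_1\tau^{-1}$ is the first row of $\tau^{-1}\in\ESp_d(A,I)\subseteq\Sp_d(A,I)$. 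Since by definition every element of $\Sp_d(A,I)$ is congruent to $I_d$ modulo $I$, this $\tau^{-1}$ is exactly the required symplectic completion, and the theorem follows. (Note that $\tfrac1{d!}\in k$ forces $2\in A^\times$, so $2A=A$; thus the strengthening of Theorem~\ref{eo11} that removes the hypothesis $2R=R$ is not even needed here, and the only instance used is automatic when $d=2$, where $\ESp_2=\E_2$.)

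So everything comes down to the identity $\Um_d(A,I)=e_1\,\E_d(A,I)$, which is the relative analogue of the Fasel--Rao--Swan theorem on elementary completability of unimodular rows of length equal to the dimension. Over $A$ itself their result (using $A$ non-singular, $k$ algebraically closed, $\tfrac1{d!}\in k$) already gives $\Um_d(A)=e_1\,\E_d(A)$, so for our $\v\in\Um_d(A,I)\subseteq\Um_d(A)$ there is $\varepsilon\in\E_d(A)$ with $\v\varepsilon=e_1$, and reducing modulo $I$ shows $\bar\varepsilon$ fixes $e_1$. One would like to correct $\varepsilon$ by an element of the stabilizer of $e_1$ so as to land in the \emph{relative} group $\E_d(A,I)$, but this is exactly the delicate point: a product of elementary matrices that is congruent to $I_d$ modulo $I$ need not lie in $\E_d(A,I)$, and the usual doubling device $D(A,I)=A\times_{A/I}A$ (with $(\v,e_1)\in\Um_d(D(A,I))$) does not rescue the argument, since $D(A,I)$, though affine of dimension $d$ over $k$, is generally singular, so Fasel--Rao--Swan cannot be quoted for it. I would therefore establish the relative statement by re-running the Fasel--Rao--Swan/Suslin machinery directly in the relative setting --- a local--global principle for relative $\E_d$-orbits, Mennicke--Newman manoeuvres, descent from algebraically closed (and $C_1$) fields, and triviality of the relative van der Kallen group $\Um_d(A,I)/\E_d(A,I)$; this is precisely the sort of statement recorded in this paper as Theorem~\ref{13}.

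The passage through Theorem~\ref{eo11}, and the observation that membership in $\Sp_d(A,I)$ already encodes congruence to $I_d$ modulo $I$, are formal. I expect the entire weight of the theorem --- and all of its hypotheses, $A$ non-singular, $k$ algebraically closed, $\tfrac1{d!}\in k$, and in particular $d\equiv 2\pmod 4$ --- to sit in the relative completability identity $\Um_d(A,I)=e_1\,\E_d(A,I)$: the parity condition is what is needed to push the Fasel--Rao--Swan vanishing from the absolute to the relative situation (equivalently, to kill the relevant top Euler/Chow--Witt-class obstruction), and it is genuinely obstructive when $4\mid d$. If one is content to cite Theorem~\ref{13} as a black box, the present theorem is then an immediate consequence of it together with Theorem~\ref{eo11}; the real work, and the main obstacle, is the relative elementary-completability input.
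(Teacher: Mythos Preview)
Your reduction to Theorem~\ref{eo11} is correct and is indeed the last step the paper uses, but the input you feed into it is wrong. You claim that the heart of the matter is $\Um_d(A,I)=e_1\E_d(A,I)$ and that this is ``precisely the sort of statement recorded in this paper as Theorem~\ref{13}.'' It is not. Theorem~\ref{13} asserts $\SL_d(A,I)\cap\E(A,I)=\E_d(A,I)$, i.e.\ relative injective $K_1$-stability; it says nothing about transitivity of $\E_d(A,I)$ on $\Um_d(A,I)$. The strongest completability statement the paper proves is Theorem~\ref{17}, which gives only $\Um_d(A,I)=e_1\SL_d(A,I)$ via a factorial-row reduction, and this holds for \emph{all} $d\geq 4$ with no parity hypothesis. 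So your proposed identity $\Um_d(A,I)=e_1\E_d(A,I)$ is neither proved in the paper nor an immediate consequence of Theorem~\ref{13}; if it held as you describe (and if the parity condition were really where you locate it), you would be proving the theorem without ever using $d\equiv 2\pmod 4$ in any visible way.

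The paper's route is different and explains exactly where the congruence condition enters. One first reduces to $I$ principal (an elementary $\ESp$-move), then uses Theorem~\ref{17} to bring $\v$ to a factorial row $\psi_{(d-1)!}(\v')$ and Theorem~\ref{eo11} to do this by an element of $\ESp_d(A,I)$. The remaining task (Lemma~\ref{51}) is to complete the factorial row symplectically: here one invokes the Suslin matrix $S_{d-1}(\v',\w')$, which lies in $\Sp_{J_{d-1}}$ precisely because $d-1\equiv 1\pmod 4$; Lemma~\ref{9} then compresses it (up to $\ESp_{J_{d-1}}(A,I)$ and $\E_{2^{d-1}}(A,I)$) to $I\perp\gamma$ with $\gamma\in\Sp_d(A,I)$, and comparison with $\beta_{d-1}(\v',\w')$ together with Theorem~\ref{13} gives $\beta_{d-1}(\v',\w')\gamma^{-1}\in\E_d(A,I)$. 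A final application of Theorem~\ref{eo11} converts that elementary matrix to one in $\ESp_d(A,I)$, landing $\psi_{(d-1)!}(\v')$ in $e_1\Sp_d(A,I)$. In short: the parity condition is consumed by the Suslin-matrix/symplectic mechanism of Lemma~\ref{51}, not by an elementary-completability statement of the type you invoke; and Theorem~\ref{13} is used to identify two $\SL_d(A,I)$-matrices up to $\E_d(A,I)$, not to prove $\Um_d(A,I)=e_1\E_d(A,I)$.
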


Then using Theorem \ref{i52} we  show that the injective stability for symplectic $K_1$ falls to max$\{4,
d - 1\}$, when $d \equiv 1 \vpmod 4$ and $(d +1)!A = A$. Precisely we  prove the following.

\begin{theorem}\label{i53}
Let $A$ be a nonsingular affine algebra of dimension $d$ over an algebraically closed field $k$ and $I = (a)$ a principal ideal. 
Assume $(d+1)! \in k^* = k \setminus \{0\}$, $d \equiv 1 \vpmod 4$.
Then
$$\Sp_{d-1}(A, I) \cap \ESp_{d+1}(A, I) = \ESp_{d-1}(A, I).$$
\end{theorem}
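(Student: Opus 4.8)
The inclusion $\ESp_{d-1}(A,I)\subseteq\Sp_{d-1}(A,I)\cap\ESp_{d+1}(A,I)$ is immediate, since $\sigma\mapsto\sigma\perp I_2$ carries $\ESp_{d-1}(A,I)$ into both $\ESp_{d+1}(A,I)$ and $\Sp_{d-1}(A,I)$. For the reverse inclusion, fix $\alpha\in\Sp_{d-1}(A,I)$ with $\beta:=\alpha\perp I_2\in\ESp_{d+1}(A,I)$; the goal is $\alpha\in\ESp_{d-1}(A,I)$. Since $2n=d-1$ sits one symplectic step below the range $2n\ge d+1$ of \cite{br},\cite{cr}, and also below the range $2n\ge d$ of \cite{bcr} (which needed $4\mid d$, false here), the argument must bring in Theorem \ref{i52}.

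The plan is to invoke Theorem \ref{i52} at the auxiliary level $d+1$. As $d\equiv1\vpmod 4$ we have $d+1\equiv2\vpmod 4$, and $(d+1)!\in k^{*}$; so Theorem \ref{i52} applies with $d+1$ in place of $d$ --- directly to $A$ if that statement is read with $\dim A\le d+1$, otherwise to $A[T]$ followed by the harmless specialisation $T\mapsto0$ (the rows to be completed have entries in $A$). This gives $\Um_{d+1}(A,J)=e_{1}\Sp_{d+1}(A,J)$ for every ideal $J$. On the other hand, relative surjective stability of unimodular rows in length $d+1$ (Theorem \ref{13}) gives $\Um_{d+1}(A,J)=e_{1}\E_{d+1}(A,J)$, which by Theorem \ref{eo11} equals $e_{1}\ESp_{d+1}(A,J)$. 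Comparing, $e_{1}\Sp_{d+1}(A,J)=e_{1}\ESp_{d+1}(A,J)$, so
$$\Sp_{d+1}(A,J)=\ESp_{d+1}(A,J)\cdot P(A,J),$$
with $P(A,J)$ the relative stabiliser of the row $e_{1}$. The unipotent radical of $P$ is a product of elementary symplectic root subgroups along the first coordinate, hence lies in $\ESp_{d+1}(A,J)$, while its Levi factor is a copy of $\Sp_{d-1}(A,J)$; conjugating by a fixed element of $\ESp_{d+1}(A)$ we may take it to be $\Sp_{d-1}(A,J)\perp I_2$. Hence $\Sp_{d+1}(A,J)=\ESp_{d+1}(A,J)\cdot(\Sp_{d-1}(A,J)\perp I_2)$: surjective symplectic stability already holds at level $d-1$.

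It remains to upgrade this to the injective statement, i.e.\ to show that $\alpha$, which a priori only lies in the kernel of $\Sp_{d-1}(A,I)/\ESp_{d-1}(A,I)\to\Sp_{d+1}(A,I)/\ESp_{d+1}(A,I)$, is trivial there. I would argue locally--globally. Over each localisation $A_{\mathfrak m}$ one has $\Sp_{d-1}(A_{\mathfrak m},I_{\mathfrak m})=\ESp_{d-1}(A_{\mathfrak m},I_{\mathfrak m})$ (local ring, $d-1\ge2$), so $\alpha$ is locally elementary symplectic relative to $I$; feeding this, the hypothesis $\beta\in\ESp_{d+1}(A,I)$, and the surjective stability just obtained into the relative Local--Global (Quillen--Suslin type) principle for the elementary symplectic group of \cite{br},\cite{cr} should give $\alpha\in\ESp_{d-1}(A,I)$. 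This is where the hypothesis that $I=(a)$ is principal enters: the dilation underlying that principle is set up over the localisations $A_{a}$ and $A_{1+aA}$ --- over the former the relative conditions become absolute (and $4\mid d-1$ brings \cite{bcr} to bear on the level-$(d-1)$ bookkeeping), over the latter $(a)$ lies in the Jacobson radical so $\Sp_{d-1}$ again coincides with $\ESp_{d-1}$ relative to $(a)$ --- and the congruence $\alpha\equiv I_{d-1}\vpmod I$ makes the two pieces patch.

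The main obstacle is precisely this last step done rigorously. The Local--Global and dilation lemmas in the literature are tuned to $2n\ge d+1$, so they cannot be quoted verbatim at $2n=d-1$; one must re-run them with the completion statement $\Um_{d+1}(A,J)=e_{1}\Sp_{d+1}(A,J)$ of Theorem \ref{i52} playing the role normally played by a stable range hypothesis, all the while keeping control of the congruences modulo the ideals that occur ($I$, $(a)$, and the auxiliary ideals produced by dilation) and of the passages between $\E$-orbits and $\ESp$-orbits furnished by Theorem \ref{eo11}. The arithmetic hypothesis $d\equiv1\vpmod 4$ is what puts the auxiliary level $d+1$ in the residue class $2\vpmod 4$ required by Theorem \ref{i52}; had one tried instead to descend from level $d$ to level $d-2$, the relevant auxiliary level would be $d\equiv1\vpmod 4$, outside that class --- in accord with the optimality example announced just after the statement.
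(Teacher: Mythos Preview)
Your proposal has a genuine gap, which you yourself flag: the final Local--Global/dilation step at level $d-1$ is not carried out, and the literature results you hope to adapt are calibrated to $2n\ge d+1$, not $2n=d-1$. The surrounding heuristics do not close this gap either. For instance, Theorem~\ref{13} is an \emph{injective} stability statement for $\SL_d$, not the transitivity $\Um_{d+1}(A,J)=e_1\E_{d+1}(A,J)$ you invoke; and your appeal to \cite{bcr} over $A_a$ does not apply, since $A_a$ still has dimension $d$ while \cite{bcr} gives symplectic injective stability only at level $d$ when $4\mid d$, not at level $d-1$ when $4\mid d-1$. So the patching over $A_a$ and $A_{1+aA}$ is not justified as written.

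The paper's argument is quite different and sidesteps the Local--Global issue entirely. Given $\sigma\in\Sp_{d-1}(A,I)\cap\ESp_{d+1}(A,I)$, choose an elementary symplectic isotopy $\delta(T)\in\ESp_{d+1}(A[T])$ with $\delta(0)=I_{d+1}$ and $\delta(a)=I_2\perp\sigma$. Then $e_1\delta(T)\in\Um_{d+1}(A[T],(T^2-aT))$, and since $A[T]$ is nonsingular of dimension $d+1\equiv 2\pmod 4$ with $(d+1)!\in k^*$, Theorem~\ref{i52} completes this row to some $\alpha(T)\in\Sp_{d+1}(A[T],(T^2-aT))$. The matrix $\delta(T)\alpha(T)^{-1}$ then has first row $e_1$, hence (Lemma~\ref{rcm}) has the block form with lower right corner $\eta(T)\in\Sp_{d-1}(A[T],(T))$ satisfying $\eta(0)=I_{d-1}$ and $\eta(a)=\sigma$. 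The decisive input you are missing is Theorem~\ref{49} (the Vorst-type result for regular rings): $\Sp_{d-1}(A[T],(T))=\ESp_{d-1}(A[T],(T))$. This immediately gives $\sigma=\eta(a)\in\ESp_{d-1}(A,I)$, with the principal-ideal hypothesis $I=(a)$ used precisely to land in $I$ upon specialising $T\mapsto a$.

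In short: you correctly spot that Theorem~\ref{i52} must be applied to a ring of dimension $d+1$, but the right such ring is $A[T]$ with the ideal $(T^2-aT)$, and the point is \emph{not} to specialise immediately but to produce a symplectic isotopy $\eta(T)$ at level $d-1$; regularity (Theorem~\ref{49}) then replaces the Local--Global machinery you were reaching for.
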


We also show by an examples that one cannot expect the injective stability
bounds to fall further for smooth affine algebras over an algebraically
closed field. Just as the positive solutions of the cancellation problem
for $A^{d-1}$ leads to positive solution; the negative solution due to
Mohan Kumar in \cite{sfmk} leads to the examples that the injective stability
estimates will not fall further. These examples are also used by us to
show that the injective stability estimates do not even fall to $d -1$, as
above, when we work with affine algebras over a field of cohomological
dimension two. It remains to see if the situation improves over a field of
cohomological dimension at most one.

{\bf Throughout this article $R$ will denote a commutative ring with $1 \not= 0$ and $I$ an ideal of $R$ unless otherwise specified}.


\section{Preliminaries}

 A row $\v = (v_1, v_2, \ldots, v_n) \in R^n$ is said to be unimodular if there exists another row $\w = (w_1, w_2, \ldots, w_n) \in R^n$ such that $\langle \v, \w \rangle = \sum _{i =1}^{n}v_iw_i = 1$.  We shall denote the set of all unimodular rows in  $R^n$ by $\Um_n(R)$.  
Let $e_i$ denote the i - th row of the identity matrix $I_n$ of size $n$. Then
$\Um_n(R, I)$ will denote the set of all unimodular rows of length $n$ which are congruent to $e_1$ modulo $I$. It can be shown that for any $\v \in \Um_n(R, I)$ there exists  $\w \in \Um_n(R, I)$ such that $\langle \v, \w \rangle = 1$. Any subgroup $G$ of $\GL_n(R, I) = \{ \alpha \in \GL_n(R) : \alpha \equiv I_n \vpmod I \}$ acts on $\Um_n(R, I)$ where $I_n$ denotes the identity matrix.  Let $\v, \w \in \Um_n(R, I)$, we write $\v \sim_G \w$ if $\v = \w g$ for some $g \in G$.

\begin{definition}{\rm(Elementary group $\E_n(R)$, Relative elementary group $\E_n(R, I)$)}\label{elementary}
Given $\lambda \in R$, for $i \neq j$, let $E_{ij}(\lambda) = I_n + \lambda{e_{ij}}$ where $e_{ij} \in \M_n(R)$ is the matrix  whose only non-zero entry is $1$ at the $(i,j)$-th position.  Such $E_{ij}(\lambda)$'s are called elementary matrices.  The subgroup of $\GL_n(R)$ generated by  $E_{ij}(\lambda), i \not= j, \lambda \in R$ is called the elementary subgroup of $\GL_n(R)$ and is denoted  by $\E_n(R)$. Similarly we define $\E_n(I)$ for any ideal $I$ of $R$. We define $\E_n(R, I)$  to be the normal closure of $\E_n(I)$ in $\E_n(R)$. It is the smallest  normal subgroup of $\E_n(R)$ containing the element $E_{21}(x), x \in I$.
\end{definition}

\begin{definition}\label{symp}{\rm (Symplectic group $\Sp_{2n}(R)$, Relative symplectic group $\Sp_{2n}(R,I))$}
We define the symplectic group $\Sp_{2n}(R)$ to be the isotropy group of the standard symplectic form  $\psi_n = \sum^n_{i=1}{e_{2i-1\,2i}} - \sum^n_{i=1}{e_{2i\,2i-1}}$. In other words $\Sp_{2n}(R)=\{\alpha \in \GL_{2n}(R) \alpha^t\psi_n\alpha = \psi_n\}$. We define the relative symplectic group as $\Sp_{2n}(R,I)=\{\alpha \in \Sp_{2n}(R) \,| \, \alpha \equiv I_{2n} \vpmod I\}.$
Note that if $\alpha \in \Sp_{2n}(R,I)$, then its transpose $\alpha^t \in \Sp_{2n}(R,I)$. 
\end{definition}
Let $\sigma \in S_{2n}$ denote the permutation of the natural numbers given by $\sigma(2i) = 2i-1$ and $\sigma(2i-1)=2i;  i = 1, 2, \ldots, n$.

\begin{definition}{\rm $(\ESp_{2n}(R)$, $\ESp_{2n}(R, I) )$}\label{elesymp}
We define for $z\in R$, $1\leq i \not =  j\leq2n$,
\[ 
se_{i\,j}(z) = \begin{cases}
1_{2n} + ze_{ij} & \text{if $i = \sigma(j)$};\\

1_{2n} + ze_{ij} - (-1)^{i+j}ze_{\sigma(j)\sigma(i)} & \text{if $i\not=\sigma(j)$ and $i<j$.}
\end{cases}
\]
It is easy to see that all these generators belong to $\Sp_{2n}(R)$. We call them elementary symplectic matrices over $R$, and the subgroup of $\Sp_{2n}(R)$ generated by them is called the elementary symplectic group $\ESp_{2n}(R)$. Similarly the subgroup generated by $se_{ij}(z), z \in I$ is denoted by $\ESp_{2n}(I)$. The group $\ESp_{2n}(R, I)$ is defined to be the smallest normal subgroup of $\ESp_{2n}(R)$ containing $\ESp_{2n}(I)$.
\end{definition}

The following is easy.
\begin{lemma}\label{rcm}
If the first row of a symplectic matrix $M\in \Sp_{2n}(R, I)$ equals $e_1$, then its second column is equal to $e_2^t$ and vice versa.
\end{lemma}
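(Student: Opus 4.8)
The plan is to argue directly from the defining relation $M^{t}\psi_{n}M=\psi_{n}$, after noting that the congruence $M\equiv I_{2n}$ modulo $I$ plays no role: the statement is really about an arbitrary $M\in\Sp_{2n}(R)$. Write $c_{i}=Me_{i}^{t}$ for the $i$-th column of $M$, so that $M^{t}$ has rows $c_{1}^{t},\dots,c_{2n}^{t}$. The relation $M^{t}\psi_{n}M=\psi_{n}$ is then equivalent to the family of scalar identities $c_{i}^{t}\psi_{n}c_{j}=(\psi_{n})_{ij}$ for all $1\le i,j\le 2n$. From the definition of $\psi_{n}$ one reads off the only two computations needed: the first column of $\psi_{n}$ is $-e_{2}^{t}$ and its second column is $e_{1}^{t}$; moreover $\psi_{n}$ is skew-symmetric and orthogonal with $\psi_{n}^{2}=-I_{2n}$, so $\psi_{n}^{-1}=-\psi_{n}$, and $M$ is invertible over $R$ since $\psi_{n}^{-1}M^{t}\psi_{n}$ is a two-sided inverse.

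For the forward implication, the hypothesis that the first row of $M$ equals $e_{1}$ says precisely that $M^{t}e_{1}^{t}=e_{1}^{t}$, equivalently $e_{1}^{t}c_{i}=\delta_{1i}$ for every $i$. Taking $j=2$ in the identities above gives $c_{i}^{t}(\psi_{n}c_{2})=(\psi_{n})_{i2}=\delta_{i1}$ for all $i$, which is to say $M^{t}(\psi_{n}c_{2})=e_{1}^{t}$. Multiplying by $(M^{t})^{-1}$ and using $M^{t}e_{1}^{t}=e_{1}^{t}$ yields $\psi_{n}c_{2}=e_{1}^{t}$, whence $c_{2}=\psi_{n}^{-1}e_{1}^{t}=-\psi_{n}e_{1}^{t}=-(-e_{2}^{t})=e_{2}^{t}$, which is the assertion. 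For the converse, assume $c_{2}=e_{2}^{t}$; then for every $i$ one gets $\delta_{i1}=(\psi_{n})_{i2}=c_{i}^{t}\psi_{n}e_{2}^{t}=c_{i}^{t}e_{1}^{t}$, i.e.\ the $(1,i)$-entry of $M$ is $\delta_{1i}$, so the first row of $M$ is $e_{1}$.

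I do not expect any genuine obstacle here — this is exactly the ``easy'' lemma the authors flag. The only points deserving attention are bookkeeping ones: getting the signs and indices of the two relevant columns of $\psi_{n}$ correct, and invoking invertibility of $M$ (rather than a determinant argument) so that the proof remains valid over an arbitrary commutative ring. Equivalently one may phrase the argument in terms of the symplectic form $\langle x,y\rangle=x^{t}\psi_{n}y$ and the fact that the columns of a symplectic matrix form a symplectic basis; the computation is identical.
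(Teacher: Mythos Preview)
Your argument is correct. The paper does not actually supply a proof of this lemma --- it merely records that ``the following is easy'' and moves on --- so there is nothing to compare against beyond noting that your direct computation from $M^{t}\psi_{n}M=\psi_{n}$ is exactly the kind of verification the authors had in mind. One tiny notational slip: when you write ``$e_{1}^{t}c_{i}=\delta_{1i}$'' you mean $e_{1}c_{i}$ (row times column), since in the paper's conventions $e_{1}$ is a row and $e_{1}^{t}$ a column; the computation itself is unaffected.
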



\begin{definition}\label{7a}{\rm(The excision ring)}
If $I$ is an ideal of $R$, one constructs the ring $\ZZ \oplus I$ with multiplication defined by 
$$(n,  i)(m,  j) = (nm,  nj+mi+ij)$$ for $m,n \in \ZZ, i,j \in I$.
If the dimension of the ring is $d \geq 1$, then the maximal spectrum of $\ZZ \oplus I$ is the union of finitely many subspaces of dimension at most $d$ {\rm (\cite{gsu}, 3.19)}.  

There is a natural homomorphism $\mathfrak{f}: \ZZ \oplus I \longrightarrow R$ given by $(m , i) \longmapsto m+i \in R$. Let $\v = (1 +i_1, i_2, \ldots, i_n) \in \Um_n(R, I)$ where $i_j$'s are in $I$. Then we shall say $\tilde{\v} = (\tilde{1} + \tilde{i_1}, \tilde{i_2}, \ldots, \tilde{i_n}) \in \Um_n(\ZZ \oplus I, 0 \oplus I)$ for $\tilde{1} = (1, 0), \tilde{i_j} = (0, i_j)$ to be a lift of $\v$. Clearly $\mathfrak{f}$ sends $\tilde{\v}$ to $\v$.
\end{definition}

We shall define $\MSE_n(R) = \frac{\Um_n(R)}{\E_n(R)}$ and likewise $\MSE_n(R, I) = \frac{\Um_n(R, I)}{\E_n(R, I)}$. We recall the Excision theorem (see \cite{gsu}, Theorem
$3.21$). 

\begin{theorem}\label{7b}{\rm (Excision theorem)} Let $n \geq 3$
be an integer and $I$ be an ideal of a commutative ring $R$. Then the
natural maps  $F : \MSE_n(\ZZ \oplus I, 0 \oplus I) \rightarrow \MSE_n(R,
I)$ {\rm defined by} $[(a_i)] \mapsto [(\mathfrak{f}(a_i))]$ {\rm and}  $G : \MSE_n(\ZZ
\oplus I, 0 \oplus I) \rightarrow \MSE_n(\ZZ \oplus I)$ {\rm defined by} $ [(a_i)] \mapsto [(a_i)]$ are
bijections. 
	
\end{theorem}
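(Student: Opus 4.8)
The plan is to prove that $F$ and $G$ are each bijective, treating the two maps separately and in each case doing surjectivity first. Throughout write $B=\ZZ\oplus I$ and $J=0\oplus I$, so that $\mathfrak f$ restricts to an isomorphism of non-unital rings $J\xrightarrow{\sim}I$, the quotient $B/J$ is canonically $\ZZ$, and $s\colon\ZZ\to B$, $m\mapsto(m,0)$, is a section (as a ring map) of $B\twoheadrightarrow\ZZ$. For surjectivity of $F$ I would take $\v\in\Um_n(R,I)$, pick $\w\in\Um_n(R,I)$ with $\langle\v,\w\rangle=1$, and use the lift $\tilde\v$ of Definition \ref{7a}: then $\langle\tilde\v,\tilde\w\rangle$ is $\equiv1$ modulo $J$ and maps to $\langle\v,\w\rangle=1$ under $\mathfrak f$, so by injectivity of $\mathfrak f|_J$ it equals $1_B$, giving $\tilde\v\in\Um_n(B,J)$ with $F([\tilde\v])=[\v]$. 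For surjectivity of $G$ I would reduce a given $\v\in\Um_n(B)$ modulo $J$ to $\bar\v\in\Um_n(\ZZ)$; since $\ZZ$ has stable rank $2\le n-1$, $\E_n(\ZZ)$ acts transitively on $\Um_n(\ZZ)$, so $\bar\v\bar\varepsilon=e_1$ for some $\bar\varepsilon\in\E_n(\ZZ)$, and then $\v\,s(\bar\varepsilon)\in\Um_n(B,J)$ lies over $[\v]$.

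For injectivity of $F$ the crucial point will be that $\mathfrak f$ carries $\E_n(B,J)$ onto $\E_n(R,I)$ when $n\ge3$. Granting this, if $\tilde\v,\tilde\w\in\Um_n(B,J)$ and $\mathfrak f(\tilde\v)\varepsilon=\mathfrak f(\tilde\w)$ with $\varepsilon\in\E_n(R,I)$, I would lift $\varepsilon$ to $\tilde\varepsilon\in\E_n(B,J)$ and observe that $\tilde\v\tilde\varepsilon$ and $\tilde\w$ both lie in $e_1+J^n$ and have the same image under $\mathfrak f$, hence coincide, so $[\tilde\v]=[\tilde\w]$. To get the surjectivity of $\mathfrak f$ on relative elementary groups I would use that $\E_n(R,I)$ is generated by the conjugates $\alpha E_{jk}(x)\alpha^{-1}$ ($x\in I$, $\alpha\in\E_n(R)$), strip $\alpha$ one generator $E_{st}(a)$ at a time, and rewrite each such conjugate, via the Steinberg relations and the identity $E_{jk}(x)=[E_{jp}(x),E_{pk}(1)]$ with a third index $p$ (here $n\ge3$ is essential), as a product of matrices $E_{pq}(y)$ with $y\in I$ — every ring element $a$ inherited from $\alpha$ reappears multiplied into $I$ — and each $E_{pq}(y)$, $y\in I$, lifts to $E_{pq}\big((0,y)\big)\in\E_n(J)\subseteq\E_n(B,J)$.

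For injectivity of $G$ I would first note that $\E_n(B,J)$ is normal in $\E_n(B)$, that it generates $\E_n(B)$ together with $s(\E_n(\ZZ))$, and that $\E_n(B,J)\cap s(\E_n(\ZZ))=\{1\}$, so $\E_n(B)=\E_n(B,J)\rtimes s(\E_n(\ZZ))$. Thus, given $\v\varepsilon=\w$ with $\v,\w\in\Um_n(B,J)$ and $\varepsilon\in\E_n(B)$, one writes $\varepsilon=\varepsilon_J\cdot s(\tau)$, absorbs $\varepsilon_J\in\E_n(B,J)$ into $\v$, and is reduced to $\varepsilon=s(\tau)$; reduction modulo $J$ then forces $e_1\tau=e_1$. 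It remains to check that $\operatorname{Stab}_{\E_n(\ZZ)}(e_1)$ acts trivially on $\Um_n(B,J)/\E_n(B,J)$; since $\ZZ$ has stable rank $\le n-1$ this stabiliser is generated by the $E_{j1}(\lambda)$ ($j\ge2$) and by $\E_{n-1}(\ZZ)$ on the coordinates $2,\dots,n$, and each such generator alters one coordinate $u_l$ of a row $\u\in\Um_n(B,J)$ by $\lambda u_k$ with $k\ne l$ — an element of $J$ — so the claim follows from the standard relative dilation lemma: for $n\ge3$, a coordinate of a row in $\Um_n(R,I)$ may be modified by an element of $I$ lying in the ideal generated by the other coordinates without changing its class in $\Um_n(R,I)/\E_n(R,I)$.

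The hard part will be the two inputs this rests on: the surjectivity of $\mathfrak f\colon\E_n(B,J)\to\E_n(R,I)$ and the relative dilation lemma. Both are exactly where the hypothesis $n\ge3$ is indispensable — each needs a spare index to carry out the Steinberg rewriting — and both fail for $n=2$, in accordance with the failure of excision for $\Um_2/\E_2$. I would expect the technical heart of the write-up to be the bookkeeping in these two lemmas: keeping track that every non-relative ring element ends up multiplied into $I$, and using unimodularity to realise the increments $\lambda u_k$ through genuinely relative elementary operations.
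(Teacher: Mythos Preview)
First, note that the paper does not prove this theorem: it is quoted from van der Kallen \cite{gsu}, Theorem~3.21, without argument. So there is no ``paper's own proof'' to compare against, and your proposal has to stand on its own.

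Your treatment of surjectivity of $F$ and $G$, and of injectivity of $G$ via the semidirect decomposition $\E_n(B)=\E_n(B,J)\rtimes s(\E_n(\ZZ))$ together with the relative dilation lemma, is sound and is essentially the standard line. The genuine gap is in your injectivity argument for $F$. You reduce it to the surjectivity of $\mathfrak f\colon \E_n(B,J)\to\E_n(R,I)$, and propose to prove the latter by rewriting each conjugate $\alpha E_{jk}(x)\alpha^{-1}$ as a product of elementaries $E_{pq}(y)$ with $y\in I$. But that rewriting would yield $\E_n(R,I)=\E_n(I)$, and this is false in general. Take $g=E_{ji}(a)E_{ij}(x)E_{ji}(-a)$ with $a\in R$, $x\in I$: one computes
\[
g=I_n + x\,e_{ij} - a^2x\,e_{ji} + ax\,(e_{jj}-e_{ii}),
\]
so modulo $I^2$ the diagonal of $g$ is $(\ldots,1-ax,\ldots,1+ax,\ldots)$. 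On the other hand any finite product $\prod_k E_{p_kq_k}(y_k)$ with $y_k\in I$ is, modulo $I^2$, equal to $I_n+\sum_k y_k e_{p_kq_k}$, which has \emph{trivial} diagonal since $p_k\ne q_k$. Hence $g\notin\E_n(I)$ (e.g.\ for $R=k[a,x]$, $I=(x)$ over a field $k$), and the Steinberg rewriting you sketch cannot terminate in matrices with entries only in $I$. Concretely, your commutator trick $E_{ij}(x)=[E_{ip}(x),E_{pj}(1)]$ moves the problem but does not eliminate it: after conjugating, a factor $E_{ki}(-a)$ with $a\in R\setminus(\ZZ+I)$ remains, and it does not lift to $B$.

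What does survive is that $\mathfrak f(\E_n(B,J))=\E_n(\ZZ+I,\,I)$, which is a priori smaller than $\E_n(R,I)$. Van der Kallen's argument does not try to lift the group map; it works directly with orbits, showing that the $\E_n(R,I)$-action on $\Um_n(R,I)$ is generated by moves of exactly the dilation-lemma type you already isolate for $G$: altering one coordinate by an element of $I$ lying in the ideal of the remaining coordinates. Such moves depend only on the non-unital ring $I$, hence transport verbatim to $(B,J)$. In other words, the same relative dilation lemma is the engine for injectivity of $F$ too; the step you are missing is that the Vaserstein generators $E_{ji}(a)E_{ij}(x)E_{ji}(-a)$ of $\E_n(R,I)$, \emph{when acting on a given row} $\v\in\Um_n(R,I)$, can be replaced by a chain of such moves --- an orbit-level statement, not a group-level one.
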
 

\begin{definition}\label{retract} We shall say a ring homomorphism $\phi: B
\twoheadrightarrow D$ has a section if there exists a ring homomorphism
$\gamma : D \hookrightarrow B$ so that $\phi \circ \gamma $ is the
identity on $D$. We shall also say that $D$ is a retract of $B$.
\end{definition} 

The following is easy.

\begin{lemma}\label{lem:august15}  Let $B, D$ be rings and
and  $\pi : B \twoheadrightarrow D$ has a section.  If $J = {\rm
ker}(\pi)$, then $\E_{n}(B, J) = \E_{n}(B) \cap \SL_{n}(B, J), n \geq 3$ and $\ESp_{2n}(B, J) = \ESp_{2n}(B) \cap \Sp_{2n}(B, J), n \geq 2$.
\end{lemma}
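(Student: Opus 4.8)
The plan is to prove the two statements in parallel, since they are formally identical: one for the linear case ($\E_n$ inside $\SL_n$) and one for the symplectic case ($\ESp_{2n}$ inside $\Sp_{2n}$). In each case the inclusion $\subseteq$ is immediate: $\E_n(B,J)$ is by definition a subgroup of $\E_n(B)$, and it is also contained in $\SL_n(B,J)$ because the generators $E_{21}(x)$ with $x\in J$ lie in $\SL_n(B,J)$, which is a normal subgroup of $\SL_n(B)$, so the normal closure in $\E_n(B)$ stays inside $\SL_n(B,J)$; the same argument with $se_{ij}(z)$, $z\in J$, handles the symplectic side using Lemma \ref{rcm} and the fact that $\Sp_{2n}(B,J)$ is normal in $\Sp_{2n}(B)$. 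So the content is the reverse inclusion $\supseteq$.

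For the reverse inclusion I would exploit the section $\gamma: D\hookrightarrow B$. Write $\rho = \gamma\circ\pi : B\to B$; this is a ring endomorphism that is the identity modulo $J$ and satisfies $\rho^2=\rho$, with image $\gamma(D)$ and kernel $J$. Applying $\rho$ entrywise gives a group endomorphism of $\E_n(B)$ (respectively $\ESp_{2n}(B)$), because $\rho$ carries elementary generators to elementary generators. Now take $\alpha\in \E_n(B)\cap\SL_n(B,J)$. Since $\alpha\in\E_n(B)$, $\rho(\alpha)\in\E_n(\gamma(D))$; since $\alpha\equiv I_n \pmod J$ and $\rho$ kills $J$, we get $\rho(\alpha) = I_n$. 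Therefore $\alpha = \alpha\,\rho(\alpha)^{-1}$, and the key point is that for an elementary matrix the ``relative'' piece $\alpha\,\rho(\alpha)^{-1}$ can be shown to lie in $\E_n(B,J)$. Concretely, write $\alpha$ as a product of generators $E_{ij}(\lambda)$; then $\rho(\alpha)$ is the product of the $E_{ij}(\rho(\lambda))$ in the same order, and one checks by a telescoping argument — pulling the discrepancy $E_{ij}(\lambda)E_{ij}(\rho(\lambda))^{-1} = E_{ij}(\lambda-\rho(\lambda))$, which is an elementary matrix with entry in $J$, past the remaining factors by conjugation — that $\alpha\,\rho(\alpha)^{-1}$ is a product of $\E_n(B)$-conjugates of elementary matrices with entries in $J$, hence lies in $\E_n(B,J)$. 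Since $\rho(\alpha)=I_n$ this yields $\alpha\in\E_n(B,J)$.

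The symplectic case runs along identical lines: $\rho$ applied entrywise sends $se_{ij}(z)$ to $se_{ij}(\rho(z))$ and hence restricts to an endomorphism of $\ESp_{2n}(B)$ with image in $\ESp_{2n}(\gamma(D))$; for $\beta\in\ESp_{2n}(B)\cap\Sp_{2n}(B,J)$ we again get $\rho(\beta)=I_{2n}$, and the same telescoping-by-conjugation argument, now using $se_{ij}(z)se_{ij}(\rho(z))^{-1} = se_{ij}(z-\rho(z))$ with $z-\rho(z)\in J$, shows $\beta = \beta\,\rho(\beta)^{-1}\in\ESp_{2n}(B,J)$. The main obstacle — really the only place any care is needed — is the telescoping step: making precise that conjugating an elementary (or elementary symplectic) generator with entry in $J$ by an arbitrary element of $\E_n(B)$ (resp. $\ESp_{2n}(B)$) keeps it inside the normal subgroup $\E_n(B,J)$ (resp. $\ESp_{2n}(B,J)$). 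But this is exactly the defining property of these relative groups as normal closures, so once the bookkeeping of the product expansion of $\alpha$ versus $\rho(\alpha)$ is set up correctly, the argument closes. The dimension restrictions $n\geq 3$ (linear) and $n\geq 2$ (symplectic) are used only implicitly, to guarantee the standard fact that $\E_n(B,J)$ and $\ESp_{2n}(B,J)$ behave well as normal closures (e.g. that the relative elementary group is generated as claimed); no stability-type input is needed.
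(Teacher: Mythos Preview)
Your argument is correct and is the standard way to prove this lemma: the section gives a retraction $\rho=\gamma\circ\pi$, and the telescoping identity
\[
\alpha\,\rho(\alpha)^{-1}=\prod_{i=1}^{k}\bigl(\rho(g_1)\cdots\rho(g_{i-1})\bigr)\,\bigl(g_i\,\rho(g_i)^{-1}\bigr)\,\bigl(\rho(g_1)\cdots\rho(g_{i-1})\bigr)^{-1}
\]
exhibits $\alpha$ as a product of $\E_n(B)$-conjugates of generators with entries in $J$, exactly as you outline. The paper does not actually supply a proof of this lemma --- it is introduced with the sentence ``The following is easy'' and no argument is given --- so there is nothing to compare your approach against; you have written out precisely the routine verification the author had in mind.
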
 

It is well known that the double of a ring w.r.t. 
an ideal is the same as the excision algebra w.r.t. an ideal $I$. The following is proved in (\cite{cpm}, Proposition 3.1). 

\begin{proposition}\label{9a} 
Let $R$ be a ring of
dimension $d$ and $I$ a finitely generated ideal of $R$. 

Consider the Cartesian square 
$$\begin{CD}
C @>>> R \\ @VVV  @VVV\\ R @>>> R/I 
\end{CD}$$  Then, $C$ is a finitely generated algebra of dimension $d$
over $R$ and integral over $R$. In fact, $C \simeq R \oplus I$ with the
coordinate wise addition and the multiplication defined by $(a, i)(b,j) = (ab, aj + ib +ij)$, $\tilde{1} = (1, 0)$ being the identity in $C$. In particular, if $R$ is an affine algebra of dimension $d$ over a field $k$, then $C \simeq R \oplus I$ is also an affine algebra of dimension $d$ over $k$.
\end{proposition}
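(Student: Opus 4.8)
The plan is to construct the ring isomorphism $C \simeq R \oplus I$ explicitly and then to read off each of the remaining assertions from this concrete model.

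First I would unwind the Cartesian (fibre) square: by definition $C = \{(a, b) \in R \times R : a - b \in I\}$ with coordinatewise addition and multiplication, the two maps $C \to R$ being the projections and the two maps $R \to R/I$ the canonical surjection. Define $\Phi \colon C \to R \oplus I$ by $\Phi(a, b) = (a, b - a)$; this is well defined since $b - a \in I$. One checks immediately that $\Phi$ is additive, is a bijection with inverse $(a, i) \mapsto (a, a + i)$, and sends $(1,1)$ to $(1, 0)$. The one computation that needs care is multiplicativity: expanding $\Phi(a,b)\,\Phi(a',b') = (a, b-a)(a', b'-a')$ with the twisted product $(a,i)(b,j) = (ab, aj + bi + ij)$, the cross terms collapse and the result is $(aa', bb' - aa') = \Phi(aa', bb')$. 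This establishes $C \simeq R \oplus I$, which is the middle assertion of the proposition.

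Next I would record the $R$-algebra structure through this identification: $R \hookrightarrow C$, $a \mapsto (a, 0)$, is an injective ring homomorphism, and if $I = (x_1, \ldots, x_m)$ then $C$ is generated as an $R$-algebra by the elements $t_k := (0, x_k)$, since $\bigl(a, \sum_k r_k x_k\bigr) = (a, 0) + \sum_k (r_k, 0)\, t_k$. The key small observation is that each $t_k$ is integral over $R$: a direct computation in $R \oplus I$ gives $t_k^2 = (0, x_k^2) = (x_k, 0)\, t_k$, so $t_k$ is a root of the monic polynomial $T^2 - x_k T \in R[T]$. Hence $C$ is a finitely generated $R$-algebra which is integral over $R$, and therefore module-finite over $R$.

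Finally the dimension statements follow formally. Since $R \hookrightarrow C$ is an injective integral extension, lying-over and going-up give $\dim C = \dim R = d$. If moreover $R$ is an affine $k$-algebra, then $C$ is finitely generated over $R$ and $R$ is finitely generated over $k$, so $C$ is a finitely generated $k$-algebra, i.e.\ an affine $k$-algebra, necessarily of dimension $d$. I do not expect any genuine obstacle here: the whole argument is bookkeeping, the only load-bearing steps being the verification that $\Phi$ respects the twisted multiplication and the identification of the monic relation $T^2 - x_k T$ that witnesses integrality of the algebra generators.
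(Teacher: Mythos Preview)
Your argument is correct. The isomorphism $\Phi(a,b)=(a,b-a)$ does the job, your multiplicativity check is right (the second coordinate collapses to $bb'-aa'$), and the relation $t_k^2 = x_k t_k$ indeed exhibits integrality of the algebra generators; in fact your formula $(a,\sum_k r_k x_k)=(a,0)+\sum_k (r_k,0)t_k$ already shows that $1,t_1,\dots,t_m$ generate $C$ as an $R$-\emph{module}, so module-finiteness (hence integrality) comes for free without invoking the ``finitely generated $+$ integral $\Rightarrow$ module-finite'' step. The dimension and affine statements then follow exactly as you say.

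As for comparison: the paper does not prove this proposition at all---it simply cites Keshari (\cite{cpm}, Proposition~3.1). So your write-up supplies a self-contained argument where the paper defers to the literature; it is precisely the elementary verification one would expect.
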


\begin{remark}\label{9a1a}
We shall call $C$ as the excision algebra of $R$ with respect to the ideal $I$. There is a natural homomorphism $\mathfrak{g}: R \oplus I \longrightarrow R$ given by $(x, i) \longmapsto x+i \in R$. Clearly $\mathfrak{g}$ has a section.

Let $\v = (1 +i_1, i_2, \ldots, i_n) \in \Um_n(R, I)$ where $i_j$'s are in $I$. Then we shall call $\tilde{\v} = (\tilde{1} + \tilde{i_1}, \tilde{i_2}, \ldots, \tilde{i_n}) \in \Um_n(R \oplus I, 0 \oplus I)$ for $\tilde{1} = (1, 0), \tilde{i_j} = (0, i_j)$ to be a lift of $\v$. Note that $\mathfrak{g}$ sends $\tilde{\v}$ to $\v$.
\end{remark}

The following is from (\cite{sppmp}, Section \S 5,  Remarks (a)).

\begin{lemma}{\rm(Vaserstein)}\label{eo10}
For any commutative ring $R$, we have 
$$e_1\E_{2n}(R) = e_1\ESp_{2n}(R).$$
\end{lemma}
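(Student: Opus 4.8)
The inclusion $e_1\ESp_{2n}(R)\subseteq e_1\E_{2n}(R)$ is immediate from $\ESp_{2n}(R)\subseteq\E_{2n}(R)$: each generator $se_{ij}(z)$ is either the elementary matrix $I_{2n}+ze_{ij}$ or the product $(I_{2n}+ze_{ij})\bigl(I_{2n}-(-1)^{i+j}ze_{\sigma(j)\sigma(i)}\bigr)$ of two commuting elementary matrices. For the reverse inclusion, since $e_1\in e_1\ESp_{2n}(R)$, it is enough to show that the orbit $e_1\ESp_{2n}(R)$ is stable under right multiplication by the generators of $\E_{2n}(R)$; this gives $e_1\E_{2n}(R)\subseteq e_1\ESp_{2n}(R)$. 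I would not verify this for all $E_{kl}(\lambda)$ directly, but first cut the generating set down.

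The plan rests on two observations. First, a row $v$ with $v_1=1$ always lies in $e_1\ESp_{2n}(R)$: applying $se_{1,2j}(-v_{2j})$ and then $se_{1,2j-1}(-v_{2j-1})$ for $j=2,\dots,n$ (each such move alters only the named coordinate together with coordinate $2$) kills $v_3,\dots,v_{2n}$, after which $se_{12}(\cdot)$ kills coordinate $2$; thus $v$ is carried to $e_1$ by an element of $\ESp_{2n}(R)$. Second, using the relation $E_{1j}(\lambda)=\overline{se}_{1j}(\lambda)\,E_{\sigma(j)2}\bigl((-1)^{1+j}\lambda\bigr)$, where $\overline{se}_{1j}(\lambda):=I_{2n}+\lambda e_{1j}-(-1)^{1+j}\lambda e_{\sigma(j)2}\in\ESp_{2n}(R)$, together with $E_{12}(\lambda)=se_{12}(\lambda)$ and the commutator identities $[E_{p1}(a),E_{12}(b)]=E_{p2}(ab)$ and $[E_{i1}(a),E_{1j}(b)]=E_{ij}(ab)$ (applied for admissible indices), one checks that
\[
\E_{2n}(R)=\bigl\langle\,\ESp_{2n}(R),\ E_{i1}(\lambda)\ (2\le i\le 2n,\ \lambda\in R)\,\bigr\rangle .
\]
Hence it suffices to prove that $e_1\ESp_{2n}(R)$ is stable under right multiplication by each $E_{i1}(\lambda)$ with $i\ge2$. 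For $i=2$ this is trivial, since $E_{21}(\lambda)=se_{21}(\lambda)\in\ESp_{2n}(R)$.

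The main obstacle is stability under $E_{i1}(\lambda)$ for $i\ge3$. Writing $w=e_1\delta$ with $\delta\in\ESp_{2n}(R)$ and using the identity $E_{i1}(\lambda)=se_{1i}(\lambda)^{t}\,E_{2\sigma(i)}\bigl((-1)^{1+i}\lambda\bigr)$ (the transpose of an elementary symplectic generator being again in $\ESp_{2n}(R)$) reduces the question to the same kind of question for $E_{2\sigma(i)}(\pm\lambda)$, which is again attached to a non-hyperbolic index pair; so this naive reduction merely cycles among the elementary matrices joining two fixed hyperbolic planes, and one must genuinely feed in the unimodularity of $w$ — routing the residual elementary factor through the symplectically visible transvections $se_{2r-1\,2r}$, $se_{2r\,2r-1}$, clearing coordinates step by step, and finally invoking the ``$v_1=1$'' reduction. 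Making this termination argument precise is exactly the content of Vaserstein's remark, and I would follow the computation recorded in \cite{sppmp} (Section 5, Remarks (a)); the only point to stress beyond it is that nothing there requires $2n\ge4$ or any divisibility or dimension hypothesis on $R$, so the statement holds for $n\ge1$ over an arbitrary commutative ring.
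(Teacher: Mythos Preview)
The paper does not prove this lemma at all: it simply attributes the statement to Vaserstein and cites \cite{sppmp}, Section~5, Remarks~(a). Your proposal ultimately does the same --- after some preliminary reductions you explicitly defer to that same reference for the ``termination argument'' --- so the two approaches coincide. Your reductions (the generating-set claim $\E_{2n}(R)=\langle \ESp_{2n}(R),\,E_{i1}(\lambda)\rangle$ and the observation that a row with first coordinate $1$ lies in $e_1\ESp_{2n}(R)$) are correct as stated, but, as you yourself note, they do not close on their own; the substantive step remains the cited computation of Vaserstein.
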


\begin{lemma}\label{eo10a}
For any commutative ring $R$ and an ideal $I$ of $R$, we have $$e_1\E_{2n}(R, I) = e_1\ESp_{2n}(R, I).$$
\end{lemma}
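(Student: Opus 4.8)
The plan is to reduce this relative statement to the absolute Vaserstein Lemma \ref{eo10} by an excision argument. One inclusion is formal: each generator $se_{ij}(z)$ with $z\in I$ is a product of elementary matrices $E_{pq}(\pm z)$, so $\ESp_{2n}(I)\subseteq\E_{2n}(I)\subseteq\E_{2n}(R,I)$, and since $\ESp_{2n}(R)\subseteq\E_{2n}(R)$ normalizes $\E_{2n}(R,I)$, the normal closure $\ESp_{2n}(R,I)$ of $\ESp_{2n}(I)$ lies in $\E_{2n}(R,I)$; hence $e_1\ESp_{2n}(R,I)\subseteq e_1\E_{2n}(R,I)$. The real content is the opposite inclusion.

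For the opposite inclusion I would pass to the excision algebra $C=R\oplus I$ of Proposition \ref{9a}, which carries two split surjections onto $R$, namely $p\colon(r,i)\mapsto r$ and $\mathfrak g\colon(r,i)\mapsto r+i$, with common section $s\colon r\mapsto(r,0)$; set $J=\ker p=0\oplus I$. Given $v\in e_1\E_{2n}(R,I)$, write $v=e_1\epsilon$ with $\epsilon=\prod_k\gamma_k E_{21}(x_k)\gamma_k^{-1}$, $\gamma_k\in\E_{2n}(R)$, $x_k\in I$, using the description of $\E_{2n}(R,I)$ as the normal closure of the $E_{21}(x)$, $x\in I$. Applying $s$ entrywise to each $\gamma_k$ and replacing $x_k$ by $(0,x_k)\in J$ produces $\tilde\epsilon\in\E_{2n}(C)$ with $\mathfrak g(\tilde\epsilon)=\epsilon$ and $p(\tilde\epsilon)=I_{2n}$. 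Put $\tilde v=e_1\tilde\epsilon$, so $\tilde v\in e_1\E_{2n}(C)$, $\mathfrak g(\tilde v)=v$ and $p(\tilde v)=e_1$. Now Lemma \ref{eo10} over the ring $C$ gives $\tilde v=e_1\tau$ for some $\tau\in\ESp_{2n}(C)$.

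The delicate step — the one I expect to be the main obstacle — is to replace $\tau$ by an element of $\ESp_{2n}(C,J)$ completing the same row $\tilde v$. The trick I would use: since $p(\tilde v)=e_1$, the first row of $p(\tau)\in\ESp_{2n}(R)$ is $e_1$, hence so is the first row of $s(p(\tau))$ and of its inverse; therefore $\tilde\sigma:=s(p(\tau))^{-1}\tau$ still satisfies $e_1\tilde\sigma=e_1\tau=\tilde v$, while $p(\tilde\sigma)=p(\tau)^{-1}p(\tau)=I_{2n}$ forces $\tilde\sigma\in\Sp_{2n}(C,J)$. Since $\tilde\sigma\in\ESp_{2n}(C)$ as well, Lemma \ref{lem:august15} applied to $p$ and its section $s$ gives $\tilde\sigma\in\ESp_{2n}(C,J)$. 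Finally, applying $\mathfrak g$, which maps $J$ onto $I$ and carries $\ESp_{2n}(C,J)$ into $\ESp_{2n}(R,I)$, we conclude $v=\mathfrak g(\tilde v)=e_1\,\mathfrak g(\tilde\sigma)\in e_1\ESp_{2n}(R,I)$. The point worth emphasizing is that the correction must be made on the \emph{left} and by the section of $p$ rather than of $\mathfrak g$ — only then does it fix $\tilde v$ — which is exactly why the two projections of the excision algebra are assigned separate roles: $p$ (together with Lemma \ref{lem:august15}) to run the Vaserstein correction inside the relative group, and $\mathfrak g$ to transport the conclusion back to $R$. The remaining verifications — that $\tilde\epsilon$ may be taken with $p(\tilde\epsilon)=I_{2n}$ and $\mathfrak g(\tilde\epsilon)=\epsilon$, and that $\mathfrak g$ respects the relative elementary symplectic subgroups — are routine normal-closure bookkeeping; for $n=1$ the statement is trivial since $\ESp_2=\E_2$.
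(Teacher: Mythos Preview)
Your proof is correct and follows essentially the same excision argument as the paper: lift to the double $C=R\oplus I$, apply the absolute Vaserstein lemma there, correct on the left by the section of the first projection to land in $\ESp_{2n}(C)\cap\Sp_{2n}(C,J)=\ESp_{2n}(C,J)$ via Lemma~\ref{lem:august15}, and push down along $\mathfrak g$. The paper's proof is terser---it writes the correction simply as $q(\tilde\delta)^{-1}\tilde\delta$, leaving the section implicit---but the two are the same; your explicit separation of the roles of $p$ and $\mathfrak g$, and your verification that the left correction by $s(p(\tau))^{-1}$ fixes the first row, merely spell out what the paper takes for granted.
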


\begin{proof}
Let $R^* = R \oplus I$ and $I^* = 0 \oplus I$. Clearly the quotient  map $q: R^* \longrightarrow R = R^*/ I^*$ has a section.
Let $\v \in e_1\E_{2n}(R, I)$. Then there exists  $\varepsilon \in \E_{2n}(R, I)$ such that $\v = e_1 \varepsilon$. Let $\tilde{\varepsilon} \in \E_{2n}(R^*, I^*)$ be a lift of $\varepsilon$ i.e. $\mathfrak{g}(\tilde{\varepsilon}) =\varepsilon$. 
Then $\tilde{\v} = e_1\tilde{\varepsilon} \in \Um_{2n}(R^*, I^*)$ is a lift of $\v$. Now by Lemma \ref{eo10} we have  $\tilde{\delta} \in \ESp_{2n}(R^*)$ such that $\tilde{\v} = e_1 \tilde{\delta}$. Going modulo $I^*$ we have $e_1 = e_1 
q(\tilde{\delta})$. So replacing $\tilde{\delta}$ by $q(\tilde{\delta})^{-1} \tilde{\delta}$ if necessary we may assume that $\tilde{\delta} \in \ESp_{2n}(R^*) \cap \Sp_{2n}(R^*, I^*) = \ESp_{2n}(R^*, I^*)$ by Lemma \ref{lem:august15}. So $\delta = \mathfrak {g}(\tilde{\delta}) \in \ESp_{2n}(R, I)$ and $\v = \mathfrak{g}(\tilde{\v}) = e_1 \mathfrak{g}(\tilde{\delta}) = e_1 \delta \in e_1\ESp_{2n}(R, I)$. Therefore $e_1\E_{2n}(R, I) \subset e_1\ESp_{2n}(R, I)$. The reverse inclusion is obvious.
\end{proof}

The following lemma follows from (\cite{V}, Theorem 2.3(e), 2.5), (\cite{gsu}, Proposition 2.4).

\begin{lemma}\label{1}
Let the maximal spectrum $max(R)$ of $R$ be a disjoint union of $V(I) = \{\mathfrak{m} \in max (R) : I \subset \mathfrak{m}\}$ and finitely many  subsets $V_i$ where each $V_i$, when endowed with the  Zariski topology $($topology induced from Spec $R)$ is a noetherian space of dimension   at most $d$. Let $(a_0, a_1, \ldots, a_m) \in \Um_{m+1}(R)$, $a_0 \equiv 1 \vpmod I$ and $m \geq d +1$ and $S$ a subset of size $d+1$ of $\{0, 1, \ldots, m - 1\}$. Then there exists $t_i \in I$ with $t_i = 0$ for $i \not \in S$ such that  $(a_0 + a_mt_0, a_1 + a_m t_1, \ldots, a_{m-1} + a_mt_{m-1}) \in \Um_m(R)$. In particular $\Um_m(R,I) = e_1\E_m(R,I)$ for $m \geq d+2$.
\end{lemma}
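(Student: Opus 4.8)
The plan is to prove the displayed reduction statement first and then deduce the ``in particular'' clause. For the reduction, the idea is to discard the coordinates we are not allowed to touch. Set $T=\{0,1,\dots,m-1\}\setminus S$, let $B\subseteq R$ be the ideal generated by $\{a_i:i\in T\}$, and pass to $\bar R=R/B$. In $\bar R$ each $\bar a_i$ with $i\in T$ vanishes, so $(\bar a_i)_{i\in S\cup\{m\}}$ is a unimodular row of length $|S|+1=d+2$; moreover $\bar a_0\equiv 1\pmod{\bar I}$ when $0\in S$, while if $0\notin S$ then $a_0\in B$ together with $a_0\equiv 1\pmod I$ forces $\bar I=\bar R$, in which case the relative constraint below becomes vacuous.

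I would next check that the spectral hypothesis descends to $\bar R$: $\max\bar R$, identified with $V(B)\subseteq\max R$, is the disjoint union of $V(B)\cap V(I)=V(\bar I)$ and the sets $V(B)\cap V_i$, each of which is closed in $V_i$ and hence a noetherian space of dimension at most $d$. Over such a ring Vaserstein's relative stability estimate (\cite{V}, Theorem 2.3(e), 2.5) applies to the length-$(d+2)$ row $(\bar a_i)_{i\in S\cup\{m\}}$, with the reduction performed using the coordinate $\bar a_m$: it yields $\bar t_i$, $i\in S$, each in the image of $I$, with $(\bar a_i+\bar a_m\bar t_i)_{i\in S}$ unimodular of length $d+1$. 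Now lift each $\bar t_i$ to $t_i\in I$ and put $t_i=0$ for $i\in T$. Writing $J=(a_0+a_m t_0,\dots,a_{m-1}+a_m t_{m-1})$, for $i\in T$ the $i$-th generator is just $a_i$, so $B\subseteq J$; and modulo $B$ the ideal $J$ is generated by the elements $\bar a_i+\bar a_m\bar t_i$ ($i\in S$), so $J+B=R$. Hence $J=R$, i.e. $(a_0+a_m t_0,\dots,a_{m-1}+a_m t_{m-1})\in\Um_m(R)$, which is the assertion.

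For the last sentence, let $\v=(v_1,\dots,v_m)\in\Um_m(R,I)$ with $m\ge d+2$, and apply the reduction just proved to $\v$ regarded as a row of length $m=(m-1)+1$, so the reduced length is $m-1\ge d+1$. The correcting moves are of the shape ``add an $I$-multiple of $v_m$ to another coordinate'', hence lie in $\E_m(I)\subseteq\E_m(R,I)$, and they carry $\v$ to a row whose first $m-1$ coordinates form an element of $\Um_{m-1}(R,I)$. Since $v_m\in I$, a further product of elementary matrices with entries in $I$ clears the last coordinate, so it remains to show $(\u,0)\sim_{\E_m(R,I)}e_1$ for any $\u\in\Um_{m-1}(R,I)$; this is the standard relative transitivity statement for a unimodular row with a spare coordinate and follows from (\cite{gsu}, Proposition 2.4) together with \cite{V}. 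Composing the steps gives $\v\in e_1\E_m(R,I)$; the reverse inclusion is trivial.

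The main work is in the displayed statement. Granting Vaserstein's relative estimate, the new feature — that the correcting coefficients can be confined simultaneously to the prescribed index set $S$ and to the ideal $I$ — is exactly what passing to $R/B$ achieves, since killing the untouched coordinates drops the row to the stable length $d+2$. The one point that needs genuine care is verifying that the hypothesis on $\max R$, namely being a disjoint union of $V(I)$ and finitely many noetherian spaces of dimension $\le d$, is inherited by $\bar R=R/B$; everything after that is bookkeeping.
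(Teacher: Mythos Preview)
Your argument is correct, and since the paper gives no proof of this lemma beyond the citation ``follows from (\cite{V}, Theorem 2.3(e), 2.5), (\cite{gsu}, Proposition 2.4)'', there is no proof in the paper to compare against. Your quotient trick---killing the coordinates indexed by $T=\{0,\dots,m-1\}\setminus S$ and observing that the spectral hypothesis on $\max R$ passes to $\max(R/B)$---is a clean way to reduce the statement with the simultaneous constraints $t_i\in I$ and $t_i=0$ for $i\notin S$ to a direct application of Vaserstein's relative stable range in length $d+2$. The case distinction on whether $0\in S$ is handled correctly: when $0\notin S$ you get $\bar I=\bar R$, so every $\bar t_i$ lies in $\bar I$ automatically and can be lifted to $I$ since $I+B=R$. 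The verification that $J=R$ via $B\subseteq J$ and $J+B=R$ is fine.

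For the ``in particular'' clause, your reduction steps are all carried out by matrices in $\E_m(I)\subseteq\E_m(R,I)$, and the residual claim that $(\u,0)\sim_{\E_m(R,I)}e_1$ for $\u\in\Um_{m-1}(R,I)$ is indeed standard. One transparent way to see it, using ingredients already in the paper, is via the Excision Theorem~\ref{7b}: the corresponding absolute statement $(\tilde\u,0)\sim_{\E_m(\ZZ\oplus I)}e_1$ is immediate once $\tilde\u$ is unimodular, since one can place a $1$ in the spare slot and clear the rest. So your citation of \cite{gsu} and \cite{V} at that point is adequate, though you could equally well invoke excision.
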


\begin{lemma}\label{5}
Let the maximal spectrum of $R$ satisfy conditions as in Lemma \ref{1} and $\alpha \in \Sp_{2n}(R, I)$ for $2n \geq d+2$. Then $\exists$  $\varepsilon \in \ESp_{2n}(R, I)$ such that $\alpha \varepsilon = I_2 \perp \gamma$ for some $\gamma \in \Sp_{2n-2}(R, I)$.
\end{lemma}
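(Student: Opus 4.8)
The plan is to run the classical ``split off a hyperbolic plane'' argument, using as input the already-established stability for unimodular rows together with the symplectic normalisation of Lemma~\ref{rcm}.

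The first step is to bring the first row of $\alpha$ to $e_1$. Let $\v$ be this first row; since $\alpha \equiv I_{2n} \pmod{I}$ we have $\v \in \Um_{2n}(R, I)$, and because $2n \geq d+2$ Lemma~\ref{1} gives $\Um_{2n}(R, I) = e_1\E_{2n}(R, I)$, which by Lemma~\ref{eo10a} equals $e_1\ESp_{2n}(R, I)$. Hence there is $\delta \in \ESp_{2n}(R, I)$ with $\v\delta = e_1$, so that $\alpha_1 := \alpha\delta \in \Sp_{2n}(R, I)$ has first row $e_1$; by Lemma~\ref{rcm} its second column is then automatically $e_2^{t}$.

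The second step is to empty the second row of $\alpha_1$ by right multiplication with elementary symplectic matrices whose parameters lie in $I$. For $j \geq 3$ the generator $se_{2j}(z)$ acts on columns by adding $z$ times the second column to the $j$-th column and, through its symplectic correction term, by modifying only the first column; since the second column is $e_2^{t}$, this changes exactly the $(2,j)$ entry, so choosing $z$ to be the negative of the current $(2,j)$ entry (an element of $I$) kills it without touching the first row or the second column. After doing this for all $j \geq 3$, a single application of $se_{21}(z) = 1_{2n} + z e_{21}$, with $z$ the negative of the current $(2,1)$ entry, clears that last entry while affecting nothing else. The resulting matrix $\alpha_2 \in \Sp_{2n}(R, I)$ has first row $e_1$, second row $e_2$ and second column $e_2^{t}$; applying the ``vice versa'' part of Lemma~\ref{rcm} to the transpose $\alpha_2^{t}$ (which is symplectic by Definition~\ref{symp}) forces the first column of $\alpha_2$ to be $e_1^{t}$ as well. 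Therefore $\alpha_2 = I_2 \perp \gamma$ for some $\gamma$, and since $\psi_n = \psi_1 \perp \psi_{n-1}$ and $\alpha_2 \equiv I_{2n} \pmod{I}$, this $\gamma$ lies in $\Sp_{2n-2}(R, I)$. Taking $\varepsilon$ to be the product of $\delta$ and all the $se_{ij}(z)$ used above, each with $z \in I$ and hence in $\ESp_{2n}(I) \subseteq \ESp_{2n}(R, I)$, gives $\varepsilon \in \ESp_{2n}(R, I)$ with $\alpha\varepsilon = I_2 \perp \gamma$, as required.

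The real content is supplied by the cited results — Lemma~\ref{1} for the stable range of $e_1\E_{2n}$, and Lemma~\ref{eo10a} (ultimately Theorem~\ref{eo11}) for passing from the elementary to the elementary symplectic orbit. Thus the only thing requiring care in this lemma is the bookkeeping of the second step: one must verify that each elementary symplectic move used to clear the second row neither reintroduces nonzero entries into the first row nor disturbs the second column, and that the entries being cleared stay in $I$ throughout (so that the moves remain in the relative group $\ESp_{2n}(R,I)$). This is exactly where the normalisation ``second column $= e_2^{t}$'' from Lemma~\ref{rcm} is used crucially: it makes the relevant column operations act on a single entry at a time, and clearing the $(2,1)$ entry last guarantees that the corrections forced by the symplectic defining relations land only in already-irrelevant positions.
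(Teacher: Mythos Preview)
Your argument is correct and follows the paper's proof essentially line for line: reduce the first row to $e_1$ via Lemmas~\ref{1} and~\ref{eo10a}, invoke Lemma~\ref{rcm} to get the second column equal to $e_2^t$, then clear the second row by right-multiplying with $se_{2j}(z)$, $j\neq 2$, $z\in I$, and finally apply Lemma~\ref{rcm} again to force the first column to be $e_1^t$. The only minor imprecision is the phrase ``changes exactly the $(2,j)$ entry'': the symplectic correction term in $se_{2j}(z)$ for $j\ge 3$ also perturbs the $(2,1)$ entry, but since you clear that entry last (as you yourself note in the final paragraph) this does no harm.
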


\begin{proof}
Let $\v = e_1 \alpha \in \Um_{2n}(R,I)$. Since $2n \geq d+2$ we have $\Um_{2n}(R,I) = e_1\E_{2n}(R, I) = e_1\ESp_{2n}(R, I)$ by Lemmas \ref{eo10a}, \ref{1}. So we have an elementary symplectic matrix $\varepsilon' \in \ESp_{2n}(R,I)$ such that $\v\varepsilon' = e_1$. Therefore the first row of the symplectic matrix $\alpha \varepsilon'$ is $e_1$ and by Lemma \ref{rcm} its second column is $e_2^t$. So $\alpha\varepsilon'$ will look like 
$\begin{pmatrix} 1 & 0 & 0\\
                 x & 1 &v_1 \\
                 v_2^t &0 &\beta\\
\end{pmatrix}$ for some $x \in I$, $v_1, v_2 \in \M_{1 \;n-2}(I)$ and $\beta \in \GL_{n-2}(R, I)$.
Now we shall multiply $\alpha \varepsilon'$ by suitable elementary symplectic matrices of the form $se_{2j}(z), j \not= 2, z \in I$ from the right to change all non diagonal entries of the second row to zero. Note that no entries of the first row gets affected for these multiplications. The resulting matrix is a symplectic matrix whose second row is $e_2$ and therefore by Lemma \ref{rcm} its first column must be $e_1^t$. Thus we have an elementary symplectic matrix $\varepsilon$ such that $\alpha \varepsilon = I_2 \perp \gamma$ for some $\gamma \in \Sp_{2n-2}(R, I)$.
\end{proof}

\begin{corollary}\label{6}
If  $2n \geq d+2$, then we have $\varepsilon \in \ESp_{2n}(R,I)$ such that
\begin{eqnarray*}
\alpha\varepsilon = 
\begin{cases}
I_{2n-d}\perp\gamma\text{ for some} \,\gamma \in \Sp_d(R,I)\, & \text{when  $d$ is even}\\
I_{2n-d-1} \perp \gamma \text{ for some} \,\gamma \in \Sp_{d+1}(R,I)\, &  \text{when $d$ is odd}.
\end{cases}
\end{eqnarray*}
\end{corollary}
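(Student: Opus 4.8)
The plan is to obtain the statement from Lemma \ref{5} by peeling off one hyperbolic plane $I_2$ at a time. Recall that Lemma \ref{5} applies to any $\beta \in \Sp_{2m}(R,I)$ as soon as $2m \geq d+2$, yielding $\varepsilon' \in \ESp_{2m}(R,I)$ with $\beta\varepsilon' = I_2 \perp \gamma$ for some $\gamma \in \Sp_{2m-2}(R,I)$; moreover the condition on $\max(R)$ required there depends only on the ring $R$ and not on $m$, so it is preserved at every stage of the iteration.

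Concretely, I would iterate Lemma \ref{5}. First apply it to $\alpha$ (legitimate since $2n \geq d+2$) to get $\varepsilon_1 \in \ESp_{2n}(R,I)$ with $\alpha\varepsilon_1 = I_2 \perp \gamma_1$, $\gamma_1 \in \Sp_{2n-2}(R,I)$; if $2n-2 \geq d+2$, apply Lemma \ref{5} again to $\gamma_1$, and so on. After $k$ steps one has $\gamma_k \in \Sp_{2(n-k)}(R,I)$, and the process may continue precisely while $2(n-k) \geq d+2$. I stop at the first $k$ with $2(n-k) = d$ when $d$ is even, and with $2(n-k) = d+1$ when $d$ is odd; since $2n-d$ (resp. $2n-d-1$) is even, this stopping value is reached after finitely many steps of size $2$, and the last application of the lemma is to a block of size $d+2$ (resp. $d+3$), which satisfies the constraint $\geq d+2$. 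The surviving block $\gamma := \gamma_k$ then lies in $\Sp_d(R,I)$ or $\Sp_{d+1}(R,I)$, as required.

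The one point needing attention is to bundle the matrices produced at the successive stages into a single $\varepsilon \in \ESp_{2n}(R,I)$. At stage $j$ the lemma gives $\varepsilon_j' \in \ESp_{2(n-j+1)}(R,I)$ acting on the last $2(n-j+1)$ coordinates, and one multiplies $\alpha$ on the right successively by $\varepsilon_1,\; I_2\perp\varepsilon_2',\; I_4\perp\varepsilon_3',\ldots$, so that $\alpha\,\varepsilon_1(I_2\perp\varepsilon_2')(I_4\perp\varepsilon_3')\cdots = I_{2k}\perp\gamma_k$. Thus one needs $I_{2(j-1)}\perp\varepsilon_j' \in \ESp_{2n}(R,I)$: this holds because the block embedding $\beta \mapsto I_{2(j-1)}\perp\beta$ carries each generator $se_{pq}(z)$ with $z \in I$ to a generator of $\ESp_{2n}(I)$, and carries a conjugate $\eta\, se_{pq}(z)\,\eta^{-1}$ with $\eta \in \ESp_{2(n-j+1)}(R)$ to $(I_{2(j-1)}\perp\eta)\,(I_{2(j-1)}\perp se_{pq}(z))\,(I_{2(j-1)}\perp\eta)^{-1}$, with $I_{2(j-1)}\perp\eta \in \ESp_{2n}(R)$; hence $I_{2(j-1)}\perp\ESp_{2(n-j+1)}(R,I) \subseteq \ESp_{2n}(R,I)$. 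Taking $\varepsilon$ to be the above product then gives $\alpha\varepsilon = I_{2n-d}\perp\gamma$ when $d$ is even and $\alpha\varepsilon = I_{2n-d-1}\perp\gamma$ when $d$ is odd. There is no genuine obstacle in the argument; the only care needed is the parity bookkeeping that fixes the stopping stage and the verification that each partial product remains inside the relative elementary symplectic group.
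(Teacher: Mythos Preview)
Your proposal is correct and matches the paper's intended approach: the statement is recorded as an immediate corollary of Lemma~\ref{5} with no separate proof, the point being exactly the repeated application of Lemma~\ref{5} you describe. Your explicit check that the block inclusion $I_{2(j-1)}\perp\ESp_{2(n-j+1)}(R,I)\subseteq\ESp_{2n}(R,I)$ holds and your parity bookkeeping for the stopping stage fill in precisely the details the paper leaves implicit.
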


\begin{definition}{\rm(Permutation matrices)}
It is well known that there exists an injective group homomorphism from the permutation group $S_n$ on $n$ symbols to $\GL_n(\Z)$ defined by $$\sigma \longmapsto (e_{\sigma(1)}^t, e_{\sigma(2)}^t,\ldots, e_{\sigma(n)}^t) = (e_{\sigma^{-1}(1)}, e_{\sigma^{-1}(2)},\ldots, e_{\sigma^{-1}(n)})^t.$$ 
If we identify each permutation with its image in $\GL_n(\Z)$ then we can view $S_n$ as a subgroup of $\GL_n(\Z)$. Matrices in $S_n$  are called permutation matrices.

A transposition $(i\, j)$ corresponds to $E_{ji}(1) E_{ij}(-1) E_{ji}(1)\delta_j$ where $\delta_j$ is a diagonal matrix with all but $(j,j)$-th entry 1 and $(j,j)$-th entry $-1$. If $\sigma$ is an even permutation matrix then $\sigma \in \SL_n(\Z) = \E_n(\ZZ)$ as each transposition is of determinant $-1$.
 Thus the group of even permutation matrices is a subgroup of $\E_n(\Z)$.
\end{definition}

\begin{definition}\label{6a}{\rm(Suslin Matrix)} Given two vectors $\v, \w \in R^{r+1}$  A.A.Suslin in (\cite{sfm}, \S 5) gave an inductive process to construct the Suslin matrix $S_r(\v, \w)$. We recall this process: Let $\v=(a_0,\v_1)$, $\w=(b_0, \w_1)$, where $a_0, b_0 \in R$ and $\v_1, \w_1 \in \M_{1r}(R)$. Set $S_r(\v, \w) =(a_0)$ for $r =0$ and define
\[S_r(\v, \w)=
\begin{pmatrix}
a_0I_{2^{r-1}}&S_{r-1}(\v_1, \w_1)\\
-S_{r-1}(\w_1,  \v_1)^t &b_0I_{2^{r-1}}
\end{pmatrix}
\]
\end{definition}

In (\cite{sfm}, Lemma 5.1) it is noted that $S_r(\v, \w)S_r(\w, \v)^t=\langle \v, \w \rangle I_{2^r}$= $S_r(\w, \v)^t S_r(\v, \w)$  and $\text{det}\, S_r(\v, \w)$ $=\langle \v,  \w \rangle^{2^{r-1}}$ for $r\geq1$. A.A. Suslin introduced these matrices and showed that a unimodular row of the form $(a_0, a_1, a_2^2,\ldots,a_r^r)$ can be completed to an invertible matrix. Following Vaserstein in \cite{V4} we shall call unimodular row of this type as factorial row and denote it by $\psi_{r!}(\v)$ for $\v=(a_0, a_1, \ldots, a_r)$. In fact in (\cite{ms}, Proposition 2.2, Corollary 2.5) it is shown that there is an $\beta_r(\v, \w)$, $\langle \v, \w \rangle = 1$ with $[\beta_r(\v, \w)] = [S_r(\v, \w)]$ in $K_1(R)$ whose first row is $\psi_{r!}(\v)$. In (\cite{seh}, Corollary 4.3(iii)) it is shown that if $\v, \w \in \Um_{r+1}(R,I), r \geq 2$ then we may assume that $\beta_r(\v, \w) \equiv I_{r+1}$ modulo $I$.

A.A Suslin then described a sequence of forms $J_r \in M_{2^r}(R)$ by the recurrence formula:
\[
J_r=
\begin{cases}
1 & \text{for $r=0$},\\
J_{r-1} \perp - J_{r-1} & \text{for $r$ even}\\
J_{r-1} \top - J_{r-1} & \text{for $r$ odd}
\end{cases}
\]
(Here $\alpha \perp \beta = \begin{pmatrix} \alpha & 0\\0 & \beta \end{pmatrix}$ while $\alpha \top \beta = \begin{pmatrix} 0 & \alpha \\ \beta &0 \end{pmatrix}$.)

It is easy to see that $\det(J_r)=1$ for all $r$, and that $J_r^t = J_r^{-1}=(-1)^{\frac{r(r+1)}{2}}J_r$. Moreover $J_r$ is antisymmetric if $r= 4k+1$ and $r=4k+2$ whereas $J_r$ is symmetric if $r=4k$ and $r=4k+3$. In (\cite{sfm}, Lemma 5.3) it is noted that the following formulas are valid: The Suslin identities are 

\begin{align*}
&\text{for}\ r=4k :   (S_r(\v, \w)J_r)^t = S_r(\v, \w)J_r;\\
&\text{for}\ r=4k+1 :S_r(\v, \w)J_rS_r(\v, \w)^t =  \langle \v, \w\rangle J_r;\\
&\text{for}\ r=4k+2: (S_r(\v, \w)J_r)^t = - S_r(\v, \w)J_r;\\
&\text{for} \ r=4k+3: S_r(\v, \w)J_rS_r(\v, \w)^t =  \langle \v, \w \rangle J_r.
\end{align*}

\begin{remark}\label{7}
Consider the Suslin matrix $S_r(\v, \w)$, with $\langle \v, \w \rangle =1$. By the Suslin identities $S_r(\v, \w)$ is a symplectic matrix w.r.t $J_r$ when $r \equiv 1 \vpmod 4$  and an orthogonal matrix w.r.t $J_r$ when $r \equiv 3 \vpmod 4$.
It is to be noted that $J_r$ is merely a permutation matrix if we ignore the sign of its entries. So we shall have a permutation matrix $\sigma_{J_r}$ such that $\sigma_{J_r} J_r\sigma_{J_r}^t = \psi_{2^{r-1}}$ when $J_r$ is antisymmetric i.e. $r=4k+1, 4k+2$. Here $\psi_n = \sum^n_{i=1}{e_{2i-1\,2i}} - \sum^n_{i=1}{e_{2i\,2i-1}}$.

When $r=4k + 1,\ 4k+2$, we shall denote the group of matrices $\alpha \in \GL_{2n}(R)$ satisfying $\alpha J_r \alpha^t = J_r$ by $\Sp_{J_r}(R)$. Clearly 
$\Sp_{J_r}(R) = \{\alpha | \alpha J_r \alpha^t = J_r\} = \sigma_{J_r}^{-1}\Sp_{2^r}(R) \sigma_{J_r}$. We shall define $\ESp_{J_r}(R) = \sigma_{J_r}^{-1}\ESp_{2^r}(R) \sigma_{J_r}$, $\ESp_{J_r}(R, I) = \sigma_{J_r}^{-1}\ESp_{2^r}(R, I) \sigma_{J_r}$. By $($\cite{sslp}, Corollary 1.4$)$ and Theorem \ref{7b}  we have $\ESp_{J_r}(R, I) \subset \E_{2^r}(R, I)$.
\end{remark}

The following is proved in (\cite{seh}, Lemma 4.2).

\begin{lemma}\label{8}
Let $R$ be a ring and $J$ an ideal of $R$. Let $\alpha \in \SL_n(R)$ with $\alpha \equiv I_n \vpmod J$. Then there exists a unique matrix $S \in \SL_n(\ZZ \oplus J)$ such that $S \equiv I_n \vpmod {0 \oplus J}$, and with $\mathfrak{f}(S) = \alpha$, where $\mathfrak{f}:\ZZ \oplus J \longrightarrow R$ is the natural homomorphism (see Definition \ref{7a}).
 Moreover, if $n=2^m$, for some $m$, and $\alpha$ is a Suslin matrix of determinant one, then $S$ is also a Suslin matrix of determinant one.
\end{lemma}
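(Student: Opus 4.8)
The plan is to construct $S$ by lifting $\alpha$ entrywise through the canonical additive splitting of $\ZZ\oplus J$ over its ideal $0\oplus J$, and then to check that ``$\det S=\tilde 1$'' and ``$S$ is a Suslin matrix'' are automatic consequences of the corresponding facts for $\alpha$.

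First I would settle existence, uniqueness, and the determinant at once. Since $\alpha\equiv I_n\pmod J$ write $\alpha=I_n+\beta$ with $\beta\in\M_n(J)$, and set $S=I_n+\tilde\beta$, where $\tilde\beta$ has entries $\tilde\beta_{ij}=(0,\beta_{ij})\in 0\oplus J\subseteq\ZZ\oplus J$; then $S\equiv I_n\pmod{0\oplus J}$ and $\mathfrak f(S)=I_n+\beta=\alpha$ by construction. Uniqueness is immediate because $\mathfrak f$ restricted to the ideal $0\oplus J$ is the injective identification $J\hookrightarrow R$, $(0,i)\mapsto i$: any $S'$ with $S'\equiv I_n\pmod{0\oplus J}$ and $\mathfrak f(S')=\alpha$ has $S-S'$ entrywise in $0\oplus J$ and killed by $\mathfrak f$, so $S=S'$. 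To see $\det S=\tilde 1$, apply the reduction homomorphism $\pi\colon\ZZ\oplus J\to\ZZ$, $(n,i)\mapsto n$: as $\det$ is a polynomial over $\ZZ$ in the matrix entries and $\pi$ applied entrywise sends $S$ to $I_n$, we get $\pi(\det S)=\det I_n=1$, so $\det S=(1,j)$ for some $j\in J$; then $1+j=\mathfrak f(\det S)=\det\alpha=1$ in $R$ forces $j=0$, hence $\det S=(1,0)=\tilde 1$ and $S\in\SL_n(\ZZ\oplus J)$.

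For the ``moreover'' I would use that the Suslin construction is purely combinatorial. From Definition \ref{6a} one checks by induction on $r$ that every entry of $S_r(\v,\w)$ is, up to sign, a single coordinate of $\v$ or of $\w$, or is $0$. This gives three facts: $S_r$ commutes with any ring homomorphism applied entrywise; $S_r(0,0)=0$, hence $S_r(e_1,e_1)=I_{2^r}$; and every coordinate of $\v$ and of $\w$ occurs as an entry of $S_r(\v,\w)$, so $(\v,\w)\mapsto S_r(\v,\w)$ is injective. Now let $n=2^m$ and $\alpha=S_m(\v,\w)$ with $\v,\w\in R^{m+1}$. From $\alpha\equiv I_n=S_m(e_1,e_1)\pmod J$ and the fact that the coordinates of $\v,\w$ sit among the entries of $\alpha$, we read off $\v\equiv e_1$ and $\w\equiv e_1\pmod J$. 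Lift through $J\to 0\oplus J$ to $\tilde\v,\tilde\w\in(\ZZ\oplus J)^{m+1}$ with $\tilde\v\equiv e_1$, $\tilde\w\equiv e_1\pmod{0\oplus J}$ and $\mathfrak f(\tilde\v)=\v$, $\mathfrak f(\tilde\w)=\w$. Then $S_m(\tilde\v,\tilde\w)\equiv S_m(e_1,e_1)=I_n\pmod{0\oplus J}$ and $\mathfrak f\bigl(S_m(\tilde\v,\tilde\w)\bigr)=S_m(\v,\w)=\alpha$, so by the uniqueness already proved, $S=S_m(\tilde\v,\tilde\w)$ is a Suslin matrix, and its determinant is $\tilde 1$ by the previous paragraph (equivalently $\langle\tilde\v,\tilde\w\rangle^{2^{m-1}}=\tilde 1$).

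The only real work is the bookkeeping in the last step: verifying by induction on $r$ that each coordinate of $\v$ and of $\w$ genuinely appears (up to sign) among the entries of $S_r(\v,\w)$, which is exactly what promotes a congruence of matrices to a congruence of the underlying vectors. Everything else is formal, resting on the injectivity of $\mathfrak f$ on the ideal $0\oplus J$ together with the functoriality of $\det$ and of the Suslin construction.
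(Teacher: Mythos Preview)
Your argument is correct. The paper itself does not supply a proof of this lemma; it merely records the statement with a citation to \cite{seh}, Lemma~4.2. Your approach---lifting $\alpha$ entrywise through the additive identification $J\cong 0\oplus J$, using the two ring maps $\pi\colon\ZZ\oplus J\to\ZZ$ and $\mathfrak f\colon\ZZ\oplus J\to R$ to force $\det S=\tilde 1$, and then exploiting the inductive observation that every coordinate of $\v$ and $\w$ appears (up to sign) among the entries of $S_r(\v,\w)$ so that $\alpha\equiv I_n\pmod J$ forces $\v,\w\equiv e_1\pmod J$---is the natural one and is self-contained.
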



We recall (\cite{bcr}, Lemma 2.13) and include a proof for the sake of completeness.

\begin{lemma}\label{9}
Let the maximal spectrum of $R$ satisfy conditions as in Lemma \ref{1}, $d \geq 1$ and $S_r(\v, \w) \in \SL_{2^r}(R,I)$, $2^r\geq d+2$ for $\v, \w \in \Um_{r+1}(R,I)$ satisfying $ \langle \v, \w \rangle =1$. Assume that $r\equiv 1  \vpmod 4$. Then there exists $\varepsilon_{J_r} \in \ESp_{J_r}(R, I)$ such that $S_r(\v, \w)\varepsilon_{J_r} =(I_{2^r - k} \perp \gamma)\varepsilon$ for some $\gamma \in \Sp_k(R,I)$ and $\varepsilon \in \E_{2^r}(R,I)$. Here $k = d+1, d$ according as $d$ is odd or even respectively.
\end{lemma}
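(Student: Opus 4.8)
The plan is to mimic the argument of Lemma~\ref{5} (and Corollary~\ref{6}), but carried out inside the twisted symplectic group $\Sp_{J_r}$ rather than the standard one, using that a Suslin matrix with $r \equiv 1 \pmod 4$ is symplectic with respect to $J_r$ by Remark~\ref{7}. First I would conjugate everything by the permutation matrix $\sigma_{J_r}$ so that $J_r$ becomes the standard form $\psi_{2^{r-1}}$; then $\sigma_{J_r}^{-1}S_r(\v,\w)\sigma_{J_r}$ lies in $\Sp_{2^r}(R,I)$ (it is congruent to $I$ modulo $I$ since $\v,\w \equiv e_1 \pmod I$, so all the twisting matrices are trivial mod $I$). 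Note $2^r \geq d+2$ by hypothesis, so the hypotheses of Corollary~\ref{6} are met with $2n = 2^r$.

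Next I would apply Corollary~\ref{6} to $\sigma_{J_r}^{-1}S_r(\v,\w)\sigma_{J_r}$: there is $\varepsilon_0 \in \ESp_{2^r}(R,I)$ with $\sigma_{J_r}^{-1}S_r(\v,\w)\sigma_{J_r}\,\varepsilon_0 = I_{2^r-k}\perp\gamma_0$ for some $\gamma_0 \in \Sp_k(R,I)$, where $k = d+1$ if $d$ is odd and $k = d$ if $d$ is even. Conjugating back by $\sigma_{J_r}$ and setting $\varepsilon_{J_r} = \sigma_{J_r}\varepsilon_0\sigma_{J_r}^{-1} \in \ESp_{J_r}(R,I)$, I get
$$S_r(\v,\w)\,\varepsilon_{J_r} = \sigma_{J_r}\bigl(I_{2^r-k}\perp\gamma_0\bigr)\sigma_{J_r}^{-1}.$$
The remaining task is to rewrite the right-hand side in the stated form $(I_{2^r-k}\perp\gamma)\varepsilon$ with $\gamma \in \Sp_k(R,I)$ and $\varepsilon \in \E_{2^r}(R,I)$. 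Since $\sigma_{J_r}$ is a permutation matrix, conjugation by it permutes the coordinate blocks; the point is that the block of size $I_{2^r-k}$ gets moved around but, because $k \leq d+1$ is small compared to $2^r$, one can use the permutation freedom together with elementary matrices to push the conjugated copy of $\gamma_0$ back into the last $k$ coordinates. Concretely, write $\sigma_{J_r}(I_{2^r-k}\perp\gamma_0)\sigma_{J_r}^{-1}$ and observe it differs from a matrix of the form $I_{2^r-k}\perp\gamma$ by a permutation that fixes all but finitely many coordinates and hence, being expressible via elementary generators and (even) permutation matrices which lie in $\E_{2^r}(\ZZ)$, can be absorbed into an $\varepsilon \in \E_{2^r}(R,I)$ — here one uses Lemma~\ref{lem:august15} to recognize the relative elementary membership, and Remark~\ref{7}'s inclusion $\ESp_{J_r}(R,I)\subset\E_{2^r}(R,I)$ to keep track of which group each factor sits in.

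The main obstacle I anticipate is precisely this last bookkeeping step: one must be careful that conjugation by $\sigma_{J_r}$ does not destroy the block-triangular ``$I \perp \gamma$'' shape in a way that cannot be repaired by an \emph{elementary} matrix congruent to $I$ modulo $I$. The cleanest route is probably not to track $\sigma_{J_r}$ explicitly but rather to run the row-reduction of Lemma~\ref{5} directly on $S_r(\v,\w)$ viewed in $\Sp_{J_r}(R,I)$: since $e_1 S_r(\v,\w)$ is the unimodular row $\psi_{r!}(\v)$ (or more precisely can be brought to $e_1$ by an element of $\ESp_{J_r}(R,I)$ because $\Um_{2^r}(R,I) = e_1\E_{2^r}(R,I) = e_1\ESp_{2^r}(R,I)$ when $2^r \geq d+2$, via Lemmas~\ref{1} and \ref{eo10a} transported by $\sigma_{J_r}$), one clears the first row, deduces via the $J_r$-analogue of Lemma~\ref{rcm} that the appropriate column becomes standard, clears the second row, and iterates $2^r - k$ times. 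The factor $\varepsilon \in \E_{2^r}(R,I)$ appears because at the final stage one only knows $\gamma$ lands in $\Sp_k$ after an additional elementary correction coming from the discrepancy between the $J_r$-symplectic reduction and the standard one; absorbing that correction is where the hypothesis $d \geq 1$ and the dimension bound $2^r \geq d+2$ are used.
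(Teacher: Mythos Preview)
Your overall strategy matches the paper's: conjugate $S_r(\v,\w)$ into the standard symplectic group via $\sigma_{J_r}$, apply Corollary~\ref{6} to reduce to $I_{2^r-k}\perp\gamma$, conjugate back, and then absorb the leftover permutation conjugation into an elementary factor. The paper does exactly this (with the conjugation convention $\sigma_{J_r}\,S_r\,\sigma_{J_r}^{-1}$ rather than yours, consistent with Remark~\ref{7}). The difference --- and it is a genuine gap --- lies precisely at the step you yourself flag as the ``main obstacle.''

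You propose to certify that the correction factor $(I_{2^r-k}\perp\gamma)^{-1}\sigma^{-1}(I_{2^r-k}\perp\gamma)\,\sigma$ lies in $\E_{2^r}(R,I)$ by invoking Lemma~\ref{lem:august15}. But that lemma requires the surjection $B\twoheadrightarrow D$ to have a section, and for a general pair $(R,I)$ the quotient $R\to R/I$ has none. Normality of $\E_{2^r}(R)$ in $\SL_{2^r}(R)$ only places your $\varepsilon$ in $\E_{2^r}(R)\cap\SL_{2^r}(R,I)$; without the retract hypothesis you cannot descend to $\E_{2^r}(R,I)$. The paper circumvents this by first lifting the whole picture to the excision ring $R^*=\ZZ\oplus I$, $I^*=0\oplus I$: Lemma~\ref{8} supplies a unique Suslin-matrix lift $S_r(\v^*,\w^*)\in\SL_{2^r}(R^*,I^*)$, the maximal spectrum of $R^*$ still satisfies the hypotheses of Lemma~\ref{1} (see Definition~\ref{7a}), so Corollary~\ref{6} applies over $R^*$, and \emph{now} Lemma~\ref{lem:august15} is legitimate because $R^*/I^*\cong\ZZ$ is a retract of $R^*$. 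One then pushes everything back to $R$ along $\mathfrak{f}$. Your alternative ``cleanest route'' of running the row reduction directly in $\Sp_{J_r}(R,I)$ does not escape the problem: however the reduction is organized, at the end you must show that a matrix visibly in $\E_{2^r}(R)\cap\SL_{2^r}(R,I)$ is actually in $\E_{2^r}(R,I)$, and the excision detour is exactly what furnishes the missing section.
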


\begin{proof}
Let $R^* = \ZZ \oplus I$, $I^* = 0 \oplus I$. We shall consider the corresponding Suslin matrix $S_r(\v^*, \w^*) \in \SL_{2^r}(R^*, I^*)$ described in the Lemma \ref{8}. Since $r=4k+1$ we have $S_r(\v^*, \w^*) \in \Sp_{J_r}(R^*, I^*)$.
So $S = \sigma_{J_r}S_r(\v^*, \w^*) \sigma_{J_r}^{-1} \in \Sp_{2^r}(R^*, I^*)$. Therefore by Corollary \ref{6} we have an $\varepsilon \in \ESp_{2^r}(R^*, I^*)$ such that $S \varepsilon = I_{2^r - k} \perp \gamma^*$ for some symplectic matrix $\gamma^* \in \Sp_k(R^*,I^*)$ of desired size. If we define $\varepsilon_{J_r} = \sigma_{J_r}^{-1} \varepsilon \sigma_{J_r} \in \ESp_{J_r}(R^*, I^*)$ then we have $S_r(R^*, I^*)\varepsilon_{J_r} = \sigma_{J_r}^{-1} (I_{2^r - k} \perp \gamma)\sigma_{J_r}$.

Now after interchanging the first pair of rows and columns if necessary we may assume that $S_r(\v^*,\w^*)\varepsilon^*_{J_r} = \sigma^{-1} (I_{2^r - k} \perp \gamma^*)\sigma$ for some even permutation $\sigma \in \E_{2^r}(\Z)$. Let $\varepsilon^* = \{(I_{2^r - k} \perp \gamma^*)^{-1}\sigma^{-1}(I_{2^r - k} \perp \gamma^*)\}\sigma$. We know that $\E_{2^r}(R^*)$ is a normal subgroup of $\SL_{2^r}(R^*)$  (\cite{sslp}, Corollary 1.4). So $\varepsilon^* \in \E_{2^r}(R^*) \cap \SL_{2^r}(R^*, I^*)$. But $R^*/I^*$ is a retract of $R^*$ (Definition \ref{retract}). Therefore $\varepsilon^* \in \E_{2^r}(R^*, I^*)$ by Lemma \ref{lem:august15} and we have $S_r(\v^*,\w^*)\varepsilon^*_{J_r} =(I_{2^r - k} \perp \gamma^*)\varepsilon^*$. Taking the image in $R$ under the natural homomorphism $\mathfrak{f} : R^* \longrightarrow R$ the result follows.
\end{proof}

\section{Equality of orbits}

In the previous section we have seen that both the groups $\E_{2n}(R, I)$ and $\ESp_{2n}(R, I)$ act on $\Um_n(R, I)$. It is natural to ask whether orbits under these two different group actions are the same. In (\cite{cr}, Theorem 4.1), it is shown that this is indeed the case in the absolute case i.e. $\v\E_{2n}(R) = \v\ESp_{2n}(R)$ for $\v \in \Um_{2n}(R), n \geq 2$. In the same paper it is also proved that the result is true in the relative case too but with an extra hypothesis viz $n \geq 3$ and $2R = R$ (see Theorem 5.5). In this section we give a different proof of this result which works both in the absolute case and relative case and also in all characteristics.

\begin{definition}\label{eo1}
Let $\v = (v_0, v_1, \ldots, v_{2n-1}) \in R^{2n}$. Then we define $\widehat{\v} = \v \psi_{2n}$. Note that $\langle \v, \widehat{\v} \rangle = 0$. 
\end{definition}

\begin{lemma}{\rm(\cite{sslp}, Lemma 1.3)}\label{eo2}
Let $\v = (v_1, v_2, \ldots,v_n) \in \Um_n(R)$ and $\u = (u_1, u_2,$ $ \ldots, u_n) \in \Um_n(R)$ be such that $\sum_{i=1}^nu_iv_i = 1$. Let $\phi : R^n \rightarrow R$ be the map sending $e_i \mapsto v_i$ where $\{e_1, e_2, \ldots, e_n\}$ is the basis of $R^n$. Then for $\w = (w_1, w_2, \ldots, w_n) \in ker(\phi)$ we have $\w = \sum_{i \not= j}w_iu_j(v_je_i - v_ie_j)$.
\end{lemma}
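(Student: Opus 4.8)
The plan is to verify the asserted identity by a direct coefficient computation; no machinery beyond the two scalar relations in the hypotheses is needed. Writing $\w = \sum_{k=1}^n w_k e_k$, it suffices to show that the coefficient of $e_k$ in $\sum_{i \neq j} w_i u_j (v_j e_i - v_i e_j)$ equals $w_k$ for every $k$. In that double sum, $e_k$ arises from the $v_j e_i$-part exactly when $i = k$ (and $j \neq k$), contributing $w_k \sum_{j \neq k} u_j v_j$, and from the $-v_i e_j$-part exactly when $j = k$ (and $i \neq k$), contributing $-u_k \sum_{i \neq k} w_i v_i$. Hence the coefficient of $e_k$ on the right-hand side is
$$w_k \sum_{j \neq k} u_j v_j \;-\; u_k \sum_{i \neq k} w_i v_i.$$

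Next I would substitute the two hypotheses. From $\sum_{i=1}^n u_i v_i = 1$ one gets $\sum_{j \neq k} u_j v_j = 1 - u_k v_k$, and from $\w \in \ker(\phi)$, i.e.\ $\sum_{i=1}^n w_i v_i = 0$, one gets $\sum_{i \neq k} w_i v_i = -w_k v_k$. Plugging these in, the coefficient of $e_k$ becomes
$$w_k(1 - u_k v_k) - u_k(-w_k v_k) = w_k - w_k u_k v_k + u_k w_k v_k = w_k,$$
which is exactly what we want. Since $k$ was arbitrary, $\sum_{i \neq j} w_i u_j (v_j e_i - v_i e_j) = \sum_k w_k e_k = \w$.

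There is essentially no genuine obstacle here; the only thing to be careful about is the index bookkeeping over the set $\{(i,j) : 1 \leq i \neq j \leq n\}$ and applying the correct one of the two relations ($\langle \u, \v \rangle = 1$ versus $\langle \w, \v \rangle = 0$) to the correct partial sum. Note that unimodularity of $\v$ and $\u$ enters only through the single equation $\sum u_i v_i = 1$; conceptually the lemma records that, for a unimodular $\v$, the Koszul-type syzygies $v_j e_i - v_i e_j$ span $\ker(\phi)$, together with an explicit formula for the coefficients in terms of any completion $\u$ of $\v$.
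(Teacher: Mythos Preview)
Your proof is correct: the coefficient computation is accurate, and the two scalar relations $\sum_i u_i v_i = 1$ and $\sum_i w_i v_i = 0$ are applied exactly where they should be. The paper does not supply its own argument for this lemma (it is simply quoted from \cite{sslp}), so there is nothing to compare against; your direct verification is entirely adequate and is essentially the standard way to check this identity.
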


\begin{lemma}{\rm(\cite{vik}, Lemma 1.5)}\label{eo3}
Let $n \geq 2$ and $a \in I$, $\v \in R^{2n}$ or $a \in I$, $\v \in I^{2n}$. Then $1_{2n} + a \v^t \widehat{\v} \in \ESp_{2n}(R, I)$.
\end{lemma}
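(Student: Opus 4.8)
The plan is to treat separately the routine claim that $\alpha:=1_{2n}+a\v^t\widehat{\v}$ lies in $\Sp_{2n}(R,I)$ and the substantive claim that it lies in $\ESp_{2n}(R,I)$. For the first, since $\widehat{\v}=\v\psi_{2n}$ and $\langle\v,\widehat{\v}\rangle=\widehat{\v}\,\v^t=0$, one has $(\v^t\widehat{\v})^2=\v^t(\widehat{\v}\,\v^t)\widehat{\v}=0$, so $\alpha$ is unipotent with inverse $1_{2n}-a\v^t\widehat{\v}$; the relation $\alpha^t\psi_{2n}\alpha=\psi_{2n}$ is a one-line expansion in which the two degree-one terms cancel (using $\widehat{\v}=\v\psi_{2n}$ and $\psi_{2n}^t=-\psi_{2n}$) and the degree-two term carries the scalar factor $\v\psi_{2n}\v^t=-\langle\v,\widehat{\v}\rangle=0$; and in either hypothesis ($a\in I$, or $\v\in I^{2n}$) every entry of $a\v^t\widehat{\v}$ lies in $I$, so $\alpha\equiv1_{2n}\pmod{I}$.

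For the elementariness I would first reduce to the \emph{absolute} statement via excision. Put $R^{*}=R\oplus I$ and $I^{*}=0\oplus I$ (Proposition \ref{9a}, Remark \ref{9a1a}); the projection $R^{*}\twoheadrightarrow R$ has a section, so $\ESp_{2n}(R^{*},I^{*})=\ESp_{2n}(R^{*})\cap\Sp_{2n}(R^{*},I^{*})$ by Lemma \ref{lem:august15}. Lift $a$ and $\v$ to $a^{*}$ and $\v^{*}$ over $R^{*}$ with $\mathfrak{g}(a^{*})=a$, $\mathfrak{g}(\v^{*})=\v$, taking $a^{*}\in I^{*}$ when $a\in I$ and $\v^{*}\in(I^{*})^{2n}$ when $\v\in I^{2n}$; then every entry of $a^{*}(\v^{*})^{t}\widehat{\v^{*}}$ lies in $I^{*}$, so $1_{2n}+a^{*}(\v^{*})^{t}\widehat{\v^{*}}\in\Sp_{2n}(R^{*},I^{*})$, and it is enough to show this matrix lies in $\ESp_{2n}(R^{*})$: then it lies in $\ESp_{2n}(R^{*},I^{*})$, and $\mathfrak{g}$, which carries $\ESp_{2n}(R^{*},I^{*})$ into $\ESp_{2n}(R,I)$, sends it to $\alpha$. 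So it suffices to prove: for every commutative ring $S$, every $n\ge2$, $b\in S$ and $\u\in S^{2n}$, one has $1_{2n}+b\,\u^t\widehat{\u}\in\ESp_{2n}(S)$.

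The base case is $\u\in\Um_{2n}(S)$ having a coordinate equal to $1$, say the $j$-th. Then elementary column operations give $\u\varepsilon=e_j$ for some $\varepsilon\in\E_{2n}(S)$, so $\u\in e_j\E_{2n}(S)=e_j\ESp_{2n}(S)$ by Vaserstein's Lemma \ref{eo10}; write $\u=e_j\gamma$ with $\gamma\in\ESp_{2n}(S)$. Since $\gamma\psi_{2n}\gamma^t=\psi_{2n}$, we get $\widehat{\u}=\widehat{e_j}(\gamma^{t})^{-1}$, whence $\u^t\widehat{\u}=\gamma^{t}(e_j^{t}\widehat{e_j})(\gamma^{t})^{-1}$ and $1_{2n}+b\,\u^t\widehat{\u}=\gamma^{t}\,se_{j\,\sigma(j)}(\pm b)\,(\gamma^{t})^{-1}$ (the sign according to the parity of $j$); as $\gamma^t\in\ESp_{2n}(S)$ ($\ESp_{2n}$ being transpose-stable) and $se_{j\,\sigma(j)}(\pm b)$ is a generator, this is in $\ESp_{2n}(S)$.

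For general $\u$ I would induct on $n$, peeling off the first hyperbolic plane. Write $\u=\u_1+\u_2$ with $\u_1$ on coordinates $1,2$ and $\u_2$ on $3,\dots,2n$; distinct hyperbolic planes being symplectically orthogonal (and $\langle\u_i,\widehat{\u_i}\rangle=0$), $\widehat{\u_i}\,\u_k^{t}=0$ for all $i,k$, so with $P_i=\u_i^{t}\widehat{\u_i}$ and $M=\u_1^{t}\widehat{\u_2}+\u_2^{t}\widehat{\u_1}$ the products $P_1M,\,P_1P_2,\,MP_2$ all vanish, giving the exact identity
\[
1_{2n}+b\,\u^t\widehat{\u}=\bigl(1_{2n}+bP_1\bigr)\bigl(1_{2n}+bM\bigr)\bigl(1_{2n}+bP_2\bigr).
\]
Here $1_{2n}+bP_2$ lies in (the natural image of) $\ESp_{2(n-1)}(S)$ by the inductive hypothesis; $1_{2n}+bM$ splits, by the same identity, into a product of ``two-plane'' mixed transvections, each supported on four coordinates, which I would write out as explicit products of the generators $se_{ij}(\cdot)$ whose index pairs straddle the two planes; and $1_{2n}+bP_1$ I would obtain by re-running the boxed identity with $\u_1+e_\ell$ ($\ell$ in a spare plane, available since $n\ge2$) in place of $\u$: its left side is in $\ESp_{2n}(S)$ by the base case (since $\u_1+e_\ell$ is unimodular with a coordinate $1$), the other factors are products of generators, and solving for $1_{2n}+bP_1$ finishes it. For $n=2$ one argues the same way, handling $1_{2n}+bP_2$ also by the spare-plane substitution. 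The step I expect to demand real care is the explicit bookkeeping in these last two reductions: writing the mixed transvections, together with the correction terms, as honest products of the $se_{ij}(\cdot)$ with signs dictated by the $\sigma$-pairing of Definition \ref{elesymp}, and checking that the spurious diagonal and off-diagonal entries created by pairwise products of generators cancel so that the product equals the prescribed unipotent matrix; the rest is formal, and $n\ge2$ enters only through the spare plane.
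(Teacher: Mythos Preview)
The paper does not prove this lemma at all: it is simply quoted as Kopeiko's Lemma~1.5 from \cite{vik} and used as a black box. So there is no ``paper's own proof'' to compare against; you are supplying a proof where the paper supplies only a citation.

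Your argument is essentially correct. The reduction to the absolute case via the excision algebra $R\oplus I$ and Lemma~\ref{lem:august15} is clean and mirrors exactly how the paper handles relative-to-absolute passages elsewhere (cf.\ the proof of Lemma~\ref{eo10a}). The base case (a coordinate of $\u$ equal to~$1$) is fine once you note that every $e_j$ lies in $e_1\ESp_{2n}(S)$, so Lemma~\ref{eo10} transports to any $e_j$; your conjugation identity $1+b\,\u^{t}\widehat{\u}=\gamma^{t}\,se_{j\,\sigma(j)}(\pm b)\,(\gamma^{t})^{-1}$ is correct. The factorization $(1+bP_1)(1+bM)(1+bP_2)$ holds because all the scalars $\widehat{\u_i}\,\u_k^{\,t}$ vanish, the mixed factor really splits as $\prod_k(1+bM_k)$ with each $M_k$ supported on two hyperbolic planes, and each $1+bM_k$ is a short product of generators $se_{ij}(\cdot)$ with a single correction $se_{\ell\,\sigma(\ell)}(\cdot)$, exactly as you anticipate; the spare-plane trick then recovers $1+bP_1$ (and, for $n=2$, also $1+bP_2$) from the base case.

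One methodological remark: your route leans on Vaserstein's orbit equality $e_1\E_{2n}=e_1\ESp_{2n}$ (Lemma~\ref{eo10}) to convert elementary completability into elementary-symplectic completability. Kopeiko's original argument proceeds by a direct combinatorial induction, expressing $1_{2n}+a\,\v^{t}\widehat{\v}$ explicitly as a product of the generators $se_{ij}$ without any appeal to orbit comparison; that is more self-contained but demands precisely the sign-and-index bookkeeping you flag at the end. Either way, that bookkeeping is the only place real care is needed, and your outline handles it.
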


\begin{lemma}{\rm(\cite{vik}, Lemma 1.9)}\label{eo4}
Let $\w \in R^{2n}$ and $\v_i \in I^{2n}, n \geq 2$ for $i = 1, 2, \ldots k$ be such that $\langle \widehat{\v_i},  \w \rangle = 0$ for all $i$. Then there is an element $b \in I$ such that \[ 1_{2n} + \sum_{i = 1}^k (\v_i ^{t} \widehat{\w} + \w ^{t} \widehat{\v_i}) = \prod_{i=1}^k ( 1_{2n} + \v_i ^{t} \widehat{\w} + \w ^{t} \widehat{\v_i}) \times (1_{2n} + b \w ^{t} \widehat{\w}) \]
\end{lemma}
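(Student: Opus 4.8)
The plan is to expand the product on the right-hand side and show that, because of the hypothesis, everything above first order collapses onto a single rank-one matrix, after which the element $b$ is forced. Write $M_i=\v_i^{t}\widehat{\w}+\w^{t}\widehat{\v_i}$ for $1\le i\le k$ and $N=\w^{t}\widehat{\w}$, so that the left-hand side is $1_{2n}+\sum_{i=1}^{k}M_i$ and the right-hand side is $\prod_{i=1}^{k}(1_{2n}+M_i)\cdot(1_{2n}+bN)$.

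First I would record the elementary identities, recalling $\widehat{\v}=\v\psi_{2n}$ from Definition \ref{eo1} and that $\psi_{2n}$ is skew-symmetric with zero diagonal. Since $\langle\widehat{\v_i},\w\rangle=\v_i\psi_{2n}\w^{t}$ is a scalar it equals its own transpose, which is $-\w\psi_{2n}\v_i^{t}=-\langle\widehat{\w},\v_i\rangle$; hence the hypothesis $\langle\widehat{\v_i},\w\rangle=0$ yields both $\widehat{\v_i}\w^{t}=0$ and $\widehat{\w}\v_i^{t}=0$. Also $\widehat{\w}\w^{t}=\langle\widehat{\w},\w\rangle=0$ by Definition \ref{eo1}. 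From these four facts one gets immediately $N^{2}=0$, $M_iN=NM_i=0$ for every $i$, and, expanding $M_iM_j$ and observing that three of the four resulting terms die, $M_iM_j=\langle\widehat{\v_i},\v_j\rangle\,N$. In particular any product of three or more of the $M$'s vanishes.

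Since the higher products vanish, the expansion becomes
$$\prod_{i=1}^{k}(1_{2n}+M_i)=1_{2n}+\sum_{i=1}^{k}M_i+\sum_{1\le i<j\le k}M_iM_j=1_{2n}+\sum_{i=1}^{k}M_i+c\,N,$$
where $c=\sum_{1\le i<j\le k}\langle\widehat{\v_i},\v_j\rangle$ lies in $I$ (in fact in $I^{2}$, as each $\v_i\in I^{2n}$). Using $N^{2}=0$ and $\big(\sum_iM_i\big)N=0$ one then computes
$$\Big(1_{2n}+\sum_{i=1}^{k}M_i+cN\Big)(1_{2n}+bN)=1_{2n}+\sum_{i=1}^{k}M_i+(b+c)\,N,$$
so taking $b=-c=-\sum_{1\le i<j\le k}\langle\widehat{\v_i},\v_j\rangle\in I$ makes the right-hand side equal to $1_{2n}+\sum_{i=1}^{k}M_i$, as required. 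I expect no genuine obstacle; the only point needing care is the sign bookkeeping, i.e. deriving $\widehat{\w}\v_i^{t}=0$ from the hypothesis and checking precisely which terms of $M_iM_j$ survive, after which everything is a routine nilpotent matrix computation.
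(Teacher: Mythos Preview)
Your proof is correct. The paper does not supply its own argument for this lemma; it simply quotes it from Kopeiko \cite{vik}, Lemma~1.9, so there is nothing in the paper to compare against beyond the statement itself. Your computation --- using the skew-symmetry of $\psi_{2n}$ to deduce $\widehat{\w}\v_i^{t}=0$ from the hypothesis, then showing $M_iM_j=\langle\widehat{\v_i},\v_j\rangle N$, $M_iN=NM_i=N^{2}=0$, and hence that the product collapses to $1_{2n}+\sum_iM_i+cN$ --- is exactly the standard nilpotent bookkeeping behind Kopeiko's lemma, and the choice $b=-c\in I$ is the right one.
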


\begin{lemma}\label{eo5}
 Let $\alpha(X) \in \ESp_{2n}(R[T, T^{-1}, X], (X)), n \geq 2$. Then  for sufficiently large $N$ we have $\alpha(T^NX) \in \ESp_{2n}(R[T, X], (X))$. 
\end{lemma}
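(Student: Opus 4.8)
The substitution $X\mapsto T^{N}X$, leaving $R$ and $T$ fixed, is a ring endomorphism of $R[T,T^{-1},X]$ carrying the ideal $(X)$ into itself, so it induces a group homomorphism $\ESp_{2n}(R[T,T^{-1},X],(X))\to\ESp_{2n}(R[T,T^{-1},X],(X))$; the content of the lemma is that for $N\gg 0$ the image of $\alpha(X)$ actually lies in the subgroup $\ESp_{2n}(R[T,X],(X))$, i.e.\ can be assembled from symplectic elementary operations whose entries lie in $R[T,X]$ and whose free parameters lie in the ideal $(X)\subset R[T,X]$. The plan is first to expand $\alpha(X)$ as a finite product of conjugates of relative generators, then to relocate the conjugation into the ``vector form'' of the generators, and finally to clear the negative powers of $T$ by a rescaling.

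First, since $\ESp_{2n}(R[T,T^{-1},X],(X))$ is by definition the normal closure in $\ESp_{2n}(R[T,T^{-1},X])$ of the subgroup generated by the $se_{ij}(z)$ with $z\in(X)$, I would write
\[
\alpha(X)=\prod_{k=1}^{m}\gamma_{k}\,se_{i_{k}j_{k}}(Xh_{k})\,\gamma_{k}^{-1},\qquad \gamma_{k}\in\ESp_{2n}(R[T,T^{-1},X]),\ \ h_{k}\in R[T,T^{-1},X].
\]
The obvious obstruction is that the conjugating matrices $\gamma_{k}$ carry negative powers of $T$, and rescaling $X$ alone cannot remove them; this is the crux of the argument.

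The way around it is to absorb the conjugation into the vectors defining the generators. Each generator is $se_{ij}(z)=1_{2n}+zM_{ij}$ for a fixed integral matrix $M_{ij}$ which, after replacing the relevant standard basis vectors by $\pm$ themselves, equals $\v^{t}\widehat{\v}$ (when $i=\sigma(j)$) or $\u^{t}\widehat{\w}+\w^{t}\widehat{\u}$ (when $i\neq\sigma(j)$), with $\langle\widehat{\u},\w\rangle=\langle\widehat{\w},\u\rangle=0$ in the second case. Using the symplectic identity $\gamma^{t}\psi_{2n}=\psi_{2n}\gamma^{-1}$ one checks that for $\gamma\in\Sp_{2n}$
\[
\gamma\bigl(1_{2n}+z\,\v^{t}\widehat{\v}\bigr)\gamma^{-1}=1_{2n}+z\,(\v\gamma^{t})^{t}\,\widehat{(\v\gamma^{t})},
\]
\[
\gamma\bigl(1_{2n}+z(\u^{t}\widehat{\w}+\w^{t}\widehat{\u})\bigr)\gamma^{-1}=1_{2n}+z\bigl((\u\gamma^{t})^{t}\,\widehat{(\w\gamma^{t})}+(\w\gamma^{t})^{t}\,\widehat{(\u\gamma^{t})}\bigr),
\]
and that the relevant orthogonality relations survive. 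Applying this to each factor of the product above gives
\[
\alpha(X)=\prod_{k=1}^{m}\bigl(1_{2n}+Xh_{k}\,N_{k}\bigr),
\]
where each $N_{k}$ is $\v_{k}^{t}\widehat{\v_{k}}$ or $\u_{k}^{t}\widehat{\w_{k}}+\w_{k}^{t}\widehat{\u_{k}}$ with $\langle\widehat{\u_{k}},\w_{k}\rangle=0$, the vectors $\v_{k},\u_{k},\w_{k}$ now having entries in $R[T,T^{-1},X]$ and being otherwise unconstrained.

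Finally I would substitute $X\mapsto T^{N}X$. Only finitely many Laurent polynomials occur — the $h_{k}$ and the entries of the $\v_{k},\u_{k},\w_{k}$ — so fix $D\ge 0$ with $T^{D}h_{k},\,T^{D}\v_{k},\,T^{D}\u_{k},\,T^{D}\w_{k}$ all having entries in $R[T,X]$ for every $k$. For $N\ge 0$ the substitution $X\mapsto T^{N}X$ does not lower $T$-adic orders, so the same integrality holds after replacing $X$ by $T^{N}X$ throughout. For a factor of the first shape,
\[
1_{2n}+T^{N}Xh_{k}(T,T^{N}X)\,\v_{k}(T,T^{N}X)^{t}\,\widehat{\v_{k}(T,T^{N}X)}=1_{2n}+a\,\v^{t}\widehat{\v},
\]
with $a=T^{N-2D}Xh_{k}(T,T^{N}X)$ and $\v=T^{D}\v_{k}(T,T^{N}X)$; similarly, absorbing a factor $T^{D}$ into each of $\u_{k},\w_{k}$ and $T^{N-2D}$ into the scalar puts a factor of the second shape into the form $1_{2n}+\v^{t}\widehat{\w}+\w^{t}\widehat{\v}$. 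For $N\ge 3D$ one has $a\in(X)$, $\v\in R[T,X]^{2n}$ in the first case and $\v\in(X)^{2n}$, $\w\in R[T,X]^{2n}$, $\langle\widehat{\v},\w\rangle=0$ in the second, all over the ring $R[T,X]$; hence by Lemma \ref{eo3} (together with Lemma \ref{eo4}) each factor lies in $\ESp_{2n}(R[T,X],(X))$, and therefore so does $\alpha(T^{N}X)$. The one genuine difficulty is the negative $T$-powers flagged above, and it is exactly the passage to the vector form — where Lemmas \ref{eo3} and \ref{eo4} impose no integrality or unimodularity hypothesis — that lets the subsequent degree bookkeeping finish the proof.
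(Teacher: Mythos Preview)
Your overall strategy is the paper's: write $\alpha(X)$ as a product of conjugates $\gamma_k\,se_{i_kj_k}(Xh_k)\,\gamma_k^{-1}$, absorb the conjugation into the vectors via $\gamma^t\psi_{2n}=\psi_{2n}\gamma^{-1}$, and then clear negative powers of $T$ by substituting $X\mapsto T^NX$ and redistributing factors of $T^D$. The first-shape factors $1+a\v^t\widehat{\v}$ are indeed handled by Lemma~\ref{eo3}. The gap is in your treatment of the second-shape factors.

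You assert that a matrix $1_{2n}+\v^{t}\widehat{\w}+\w^{t}\widehat{\v}$ with $\v\in(X)^{2n}$, $\w\in R[T,X]^{2n}$ and $\langle\widehat{\v},\w\rangle=0$ lies in $\ESp_{2n}(R[T,X],(X))$ ``by Lemma~\ref{eo3} (together with Lemma~\ref{eo4})''. Neither lemma gives this. Lemma~\ref{eo3} concerns only $1+a\v^{t}\widehat{\v}$, and Lemma~\ref{eo4} is a purely algebraic factorisation identity; it converts a sum into a product but says nothing about $\ESp$-membership of the individual factors. So the clause ``each factor lies in $\ESp_{2n}(R[T,X],(X))$'' is unproved for the second shape, and this is precisely the delicate point: once you absorb $T^D$ into $\u_k,\w_k$ you have destroyed their unimodularity over $R[T,X]$, so you cannot simply conjugate back to a standard generator $se_{ij}$.

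The paper supplies the missing ingredient that you discard, namely the unimodularity of the column $\v=\v_i$ of $\gamma$ over $R[T,T^{-1},X]$. Using Lemma~\ref{eo2} (which needs $\v$ unimodular) the orthogonal partner $\widehat{\v_{\sigma(j)}}$ is written as a sum of vectors $\widehat{\v_{kl}}$, each a scalar multiple of $v_le_k-v_ke_l$; Lemma~\ref{eo4} then factors $\alpha(X)$ into pieces $1+(-1)^jXf(\v_{kl}^t\widehat{\v}+\v^t\widehat{\v_{kl}})$ together with a correction of the first shape. It is only for these very special two-entry vectors $\v_{kl}$ that the argument implicit in Kopeiko's Lemma~1.10 shows, after the substitution $X\mapsto T^{3N}X$, that each factor lies in $\ESp_{2n}(R[T,X],(X))$. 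Your proof would go through if you either carried out this decomposition before rescaling, or cited and justified (in the relative form) the general statement that $1+\v^{t}\widehat{\w}+\w^{t}\widehat{\v}\in\ESp_{2n}(R,I)$ whenever $\v\in I^{2n}$ and $\langle\widehat{\v},\w\rangle=0$; but that statement is the content of Kopeiko's lemma, not of Lemmas~\ref{eo3}--\ref{eo4}.
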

\begin{proof}
Let $S = R[T, T^{-1}]$. It is enough to prove the theorem for $\alpha(X) = \gamma se_{ij}(Xf)\gamma^{-1}$, $i \not= j$, $f \in S[X], \gamma \in \ESp_{2n}(S[X])$. Note that if $\v_j = e_j\gamma^t$ i.e. $\v_j^t$ is the $j^{\text{th}}$ column of $\gamma$ then $(-1)^{j+1}\widehat{\v_j}$ is the $\sigma(j)^{\text{th}}$  row of $\gamma^{-1}$. This is because $\gamma^t\psi_{2n}\gamma = \psi_{2n}$ gives $\widehat{\v_j} = e_j \gamma ^t \psi_{2n} = e_j \psi_{2n} \gamma^{-1} = (-1)^{j+1}e_{\sigma(j)} \gamma^{-1}$. We prove it in the following cases.

\noindent
{\bf Case - 1} $i = \sigma(j)$\\
In this case $\alpha(X) = \gamma se_{i\sigma(i)}(Xf(X))\gamma^{-1} = 1_{2n} +(-1)^{i + 1} Xf(X) \v_i^t \widehat{\v_i}$. We choose $N$ sufficiently large such that $T^N\v_i \in \M_{1\,2n}(R[X, T])$ and $T^Nf(X) \in R[X, T]$. Then $\alpha(T^{3N}X) \in  \ESp_{2n}(R[T, X], (X))$ by Lemma \ref{eo3}.

\noindent
{\bf Case - 2} $i \not= \sigma(j)$\\
In this case  $\alpha(X) = \gamma se_{ij}(Xf(X))\gamma^{-1} = 1_{2n} + (-1)^{j} Xf(X)\v_i^t \widehat{\v_{\sigma(j)}} +  (-1)^{j} Xf(X)\v_{\sigma(j)}^t \widehat{\v_i}$. Call $\v_i = \v = (v_1, v_2, \ldots, v_{2n}) \in \Um_{2n}(S[X])$ and $\widehat{\v_{\sigma(j)}} = \w = (w_1, w_2, \ldots, w_{2n}) \in \Um_{2n}(S[X])$. Note that $\langle \w, \v \rangle = 0$ since $(-1)^j\w$ is the $j^{th}$ row of $\gamma^{-1}$. Now by Lemma \ref{eo2} $\w = \sum_{k \not= l}w_ku_l(v_le_k - v_ke_l)$ where $\u = (u_1, u_2, \ldots, u_{2n}) \in \Um_{2n}(S[X])$ such that $\langle \u, \v \rangle = 1$. We have $\v_{\sigma(j)} = - \w \psi_{2n} = \sum_{k \not= l}w_ku_l((-1)^k v_le_{\sigma(k)} - (-1)^l v_ke_{\sigma(l)})$.  Let $\v_{kl} =  w_ku_l((-1)^k v_le_{\sigma(k)} - (-1)^l v_ke_{\sigma(l)})$. Then $\widehat{\v_{kl}} = w_k u_l(v_le_k - v_ke_l)$ and $\langle \widehat{\v_{kl}}, \v \rangle = 0$. Note that $\v_{\sigma(j)} = \sum_{k \not= l} \v_{kl}$ and $\w = \widehat{\v_{\sigma(j)}} =  \sum_{k \not= l} \widehat{\v_{kl}}$.                 
\begin{eqnarray*}
&&\alpha(X) = 1_{2n} + (-1)^{j} Xf(X)\v_i^t \widehat{\v_{\sigma(j)}} +  (-1)^{j} Xf(X)\v_{\sigma(j)}^t \widehat{\v_i}\\
&&= 1_{2n} +  (-1)^j Xf(X) \sum_{k \not= l}\left[\v^t \widehat{\v_{kl}} + \v_{kl}^t \widehat{\v} \right]\\
&&= \prod_{k \not= l} \left[1_{2n} + (-1)^j Xf(X)  (\v_{kl}^t  \widehat{\v} +  \v^t  \widehat{\v_{kl}})\right]
\left(1_{2n} + Xh(X) \v^t \widehat{\v}\right)\, \text{by Lemma \ref{eo4}.}
\end{eqnarray*}
We choose $N$ sufficiently large such that $T^N\v_{kl}, T^N\v$ are in $\M_{1\,2n}(R[X, T])$ and $T^Nf(X)$, $T^Nh(X) \in R[T, X]$ . Then $\alpha(T^{3N}X) \in  \ESp_{2n}(R[T, X], (X))$ by an argument which is implicit in  (\cite{vik}, Lemma 1.10).
\end{proof}

\begin{corollary}\rm{(Dilation Principle)}\label{eo6}
 Let $\alpha(X) \in \ESp_{2n}(R_a[X], (X))$. Then  for sufficiently large $N$ we have $\beta(X) \in \ESp_{2n}(R[X], (X))$  such that $\alpha(a^NX) = \beta(X)_a$. 
\end{corollary}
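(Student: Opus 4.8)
The plan is to reduce the statement to Lemma \ref{eo5} by a change of variables, exploiting the fact that $R_a$ is a quotient of the Laurent polynomial ring $R[T,T^{-1}]$ via the substitution $T\mapsto a$ (legitimate since $a$ is a unit in $R_a$), and that this quotient map is exactly the one that accounts for the $T^N$ appearing in Lemma \ref{eo5}.

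First I would fix the $R$-algebra homomorphism $\mu : R[T,T^{-1},X] \twoheadrightarrow R_a[X]$, $T\mapsto a$, $X\mapsto X$, which is surjective since its image contains $a^{-1}$, $R$ and $X$. The delicate point is to lift the given $\alpha(X)$ through $\mu$ into the \emph{relative} group $\ESp_{2n}(R[T,T^{-1},X],(X))$, not merely into $\ESp_{2n}(R[T,T^{-1},X])$. For this I would unwind the normal-closure definition: write $\alpha(X)=\prod_k \gamma_k\, se_{i_k j_k}(z_k)\,\gamma_k^{-1}$ with $\gamma_k\in\ESp_{2n}(R_a[X])$ and $z_k\in X R_a[X]$; lift each $\gamma_k$ by lifting through $\mu$ the generators $se_{pq}(g)$, $g\in R_a[X]$, of which it is a product, and lift each $z_k=Xw_k$ as $X\tilde w_k$ with $\mu(\tilde w_k)=w_k$. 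The product of these lifts is an element $\tilde\alpha(X)\in\ESp_{2n}(R[T,T^{-1},X],(X))$ with $\mu(\tilde\alpha(X))=\alpha(X)$.

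Next I would apply Lemma \ref{eo5} to $\tilde\alpha(X)$: for all sufficiently large $N$ one has $\tilde\alpha(T^N X)\in\ESp_{2n}(R[T,X],(X))$. The substitution $T\mapsto a$ defines a ring homomorphism $\phi : R[T,X]\to R[X]$ carrying the ideal $(X)$ into $(X)$, hence $\phi$ carries $\ESp_{2n}(R[T,X],(X))$ into $\ESp_{2n}(R[X],(X))$ by functoriality of the relative elementary symplectic group; set $\beta(X):=\phi(\tilde\alpha(T^N X))\in\ESp_{2n}(R[X],(X))$. To see that $\beta(X)_a=\alpha(a^N X)$, observe that on the subring $R[T,X]$ the map $\mu$ factors as $R[T,X]\xrightarrow{\phi}R[X]\hookrightarrow R_a[X]$; applying $\mu$ to $\tilde\alpha(T^N X)$ substitutes $X\mapsto a^N X$ and $T\mapsto a$ into the entries of $\tilde\alpha$, and since $\mu(\tilde\alpha(X))=\alpha(X)$ and the substitutions in the independent variables $T$ and $X$ commute, this equals $\alpha(a^N X)$. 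Thus $\alpha(a^N X)=\beta(X)_a$, as required.

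The only genuinely nontrivial step is the lifting in the first paragraph — guaranteeing that $\alpha$ lifts into the relative group — which is handled by separately lifting conjugators and generators through $\mu$; the rest is formal bookkeeping, since the substantive work has already been done in Lemma \ref{eo5} (which rests on Lemmas \ref{eo3} and \ref{eo4}). I would also note that Lemma \ref{eo5} is stated for $n\geq 2$, so this hypothesis is implicit here as well.
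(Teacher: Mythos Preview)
Your argument is correct and is essentially the same as the paper's: the paper simply asserts the existence of a lift $\gamma(X,T)\in\ESp_{2n}(R[T,T^{-1},X],(X))$ with $\gamma(X,a)=\alpha(X)$, applies Lemma~\ref{eo5}, and sets $\beta(X)=\gamma(a^NX,a)$, which is exactly your $\tilde\alpha$ and $\phi(\tilde\alpha(T^NX))$. Your explicit unwinding of the normal-closure definition to justify the lift is a useful elaboration of a step the paper leaves implicit.
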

\begin{proof}
We choose $\gamma(X, T) \in \ESp_{2n}(R[X, T, 1 /T], (X))$ such that $\gamma(X, a) = \alpha(X)$. Then by Lemma \ref{eo5} we have an integer $N$ sufficiently large such that   $\gamma(T^NX, T) \in \ESp_{2n}(R[X, T], (X))$. Now the result follows for $\beta(X) = \gamma(a^NX, a)$. 
\end{proof}

Using the dilation principle one can prove the following. The proof is similar to Vaserstein's proof of the corresponding result for  elementary action (see \cite{lam}, Chapter III, Section \S 2, Proposition 2.3, Theorem 2.4, 2.5).

\begin{proposition}\label{eo7} 
Let $\v(X) \in \Um_{2n}(R[X]), n \geq 2$ such that $\v(X)_s \sim_{\ESp_{2n}(R_s[X])} \v(0)_s$. Then for sufficiently large $m$ we have $\v(X + s^mY) \sim_{\ESp_{2n}(R[X, Y])} \v(X)$. 

Let $S$ be a sub ring of $R$. Define $\mathfrak{A} = \{a \in S \, | \, v(X)_a \sim_{\ESp_{2n}(R_a[X])} v(0)\}$ and $\mathfrak{B} = \{b \in S \ | \ \v(X + bY) \sim_{\ESp_{2n}(R[X, Y])} \v(X) \}$. 
Then $\mathfrak{A}, \mathfrak{B}$ are ideals of $S$ such that $\mathfrak{A} = \sqrt{\mathfrak{B}}$. 

Suppose $\v(X)_{\mathfrak{m}} \sim_{\ESp_{2n}(R_{\mathfrak{m}}[X])} \v(0)_{\mathfrak{m}}$ for all maximal ideals $\mathfrak{m}$ of $S$. Then $\mathfrak{A} = \mathfrak{B} = S$. In particular $\v(X + Y) \sim_{\ESp_{2n}(R[X, Y])} \v(X)$ and therefore $\v(X) \sim_{\ESp_{2n}(R[X])} \v(0)$.
\end{proposition}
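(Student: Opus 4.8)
## Proof Strategy for Proposition \ref{eo7}

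\textbf{Overview.} The plan is to follow Vaserstein's classical three-step template for ``local-global'' patching of elementary orbits, but with the elementary symplectic group $\ESp_{2n}$ in place of $\E_{2n}$, using Corollary \ref{eo6} (the Dilation Principle just established) as the one place where the symplectic structure enters nontrivially. The three pieces are: (i) a \emph{local-to-semilocal dilation} statement, giving the first assertion $\v(X+s^mY)\sim_{\ESp_{2n}(R[X,Y])}\v(X)$ from the hypothesis that localizing at $s$ trivializes things; (ii) the observation that $\mathfrak{A}$ and $\mathfrak{B}$ are ideals of $S$ with $\mathfrak{A}=\sqrt{\mathfrak{B}}$; and (iii) the patching: if $\mathfrak{A}$ (equivalently $\mathfrak{B}$) is all of $S$ locally at every maximal ideal, then it is all of $S$, whence $1\in\mathfrak{B}$ and we may take $b=1$.

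\textbf{Step (i): Dilation.} Assume $\v(X)_s\sim_{\ESp_{2n}(R_s[X])}\v(0)_s$, so there is $\alpha(X)\in\ESp_{2n}(R_s[X])$ with $\v(X)_s=\v(0)_s\,\alpha(X)$. After multiplying by $\alpha(0)^{-1}$ we may normalize $\alpha(0)=1_{2n}$, i.e. $\alpha(X)\in\ESp_{2n}(R_s[X],(X))$. Now apply Corollary \ref{eo6} with the element $a=s$: there is an integer $N$ and $\beta(X)\in\ESp_{2n}(R[X],(X))$ such that $\alpha(s^NX)=\beta(X)_s$ in $\ESp_{2n}(R_s[X])$. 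Substituting $X\mapsto X+s^NY$ and comparing, one gets that $\v(X+s^NY)$ and $\v(X)\,\beta'(X,Y)$ agree after localizing at $s$ for a suitable $\beta'\in\ESp_{2n}(R[X,Y],(?))$ built from $\beta$; the standard bookkeeping (exactly as in \cite{lam}, Chapter III, \S2) then upgrades this to an honest equality $\v(X+s^mY)\sim_{\ESp_{2n}(R[X,Y])}\v(X)$ over $R[X,Y]$, for $m=N$. I would spell out only enough of this substitution to make the match with $\v(X)$ transparent, citing the cited reference for the routine part.

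\textbf{Step (ii): $\mathfrak{A},\mathfrak{B}$ are ideals and $\mathfrak{A}=\sqrt{\mathfrak{B}}$.} That $\mathfrak{B}$ is closed under multiplication by $S$ and under addition is the usual ``Vaserstein addition'' argument: if $\v(X+bY)\sim\v(X)$ and $\v(X+b'Y)\sim\v(X)$ over $R[X,Y]$, substitute and compose the two equivalences (after a variable change $Y\mapsto Y$, using that $\ESp_{2n}(R[X,Y])$ is a group) to get $b+b'\in\mathfrak{B}$; multiplication by $c\in S$ is the substitution $Y\mapsto cY$. For $\mathfrak{A}$: localizing is exact, and the relation $\sim_{\ESp_{2n}}$ is transitive and compatible with further localization, which gives that $\mathfrak{A}$ is a radical ideal. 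The inclusion $\mathfrak{B}\subseteq\mathfrak{A}$ is immediate (set $X=0$, $Y=1$ and localize), so $\sqrt{\mathfrak{B}}\subseteq\mathfrak{A}$; conversely if $s\in\mathfrak{A}$ then Step (i) gives $s^m\in\mathfrak{B}$, so $\mathfrak{A}\subseteq\sqrt{\mathfrak{B}}$. Hence $\mathfrak{A}=\sqrt{\mathfrak{B}}$.

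\textbf{Step (iii): Patching and conclusion.} Suppose $\v(X)_{\mathfrak{m}}\sim_{\ESp_{2n}(R_{\mathfrak{m}}[X])}\v(0)_{\mathfrak{m}}$ for every maximal ideal $\mathfrak{m}$ of $S$. A standard finiteness argument (the equivalence over $R_{\mathfrak{m}}[X]$ already holds over $R_s[X]$ for some $s\in S\setminus\mathfrak{m}$, since only finitely many denominators occur in the finitely many elementary symplectic generators involved) shows $\mathfrak{A}\not\subseteq\mathfrak{m}$ for every $\mathfrak{m}$, hence $\mathfrak{A}=S$. Then $\mathfrak{B}=\sqrt{\mathfrak{B}}=\mathfrak{A}=S$ as well (note $\mathfrak{B}$ is radical here since it equals $\mathfrak{A}$), so $1\in\mathfrak{B}$, i.e. $\v(X+Y)\sim_{\ESp_{2n}(R[X,Y])}\v(X)$. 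Specializing $X=0$ and renaming $Y=X$ gives $\v(X)\sim_{\ESp_{2n}(R[X])}\v(0)$.

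\textbf{Main obstacle.} The only genuinely symplectic input is Step (i), and the delicate point there is normalization and the precise form in which Corollary \ref{eo6} is applied: one must make sure that after setting $\alpha(0)=1_{2n}$ the resulting $\alpha(X)$ lies in the \emph{relative} group $\ESp_{2n}(R_s[X],(X))$ so that the Dilation Principle applies verbatim, and then that the dilated $\beta(X)$ can be reassembled into an equivalence of the rows $\v(X+s^mY)$ and $\v(X)$ over $R[X,Y]$ (not merely over $R_s[X,Y]$). Once this is in place, Steps (ii) and (iii) are formal and identical to the $\E_{2n}$ case; I expect to carry them out by citing \cite{lam} for the routine manipulations and only highlighting where $\ESp_{2n}$ replaces $\E_{2n}$.
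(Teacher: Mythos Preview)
Your proposal is correct and matches the paper's approach exactly: the paper itself gives no detailed proof here, stating only that ``using the dilation principle one can prove the following'' and referring to Vaserstein's argument in \cite{lam}, Chapter~III, \S2, Propositions~2.3--2.5, which is precisely the three-step template (dilation, ideal structure of $\mathfrak{A}$ and $\mathfrak{B}$, patching) you outline. Your identification of Corollary~\ref{eo6} as the sole symplectic-specific input, with the rest being formally identical to the $\E_{2n}$ case, is exactly the point.

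One minor remark on Step~(i): the cleanest way to run the bookkeeping is to set $\gamma(X,Y)=\alpha(X)^{-1}\alpha(X+Y)\in\ESp_{2n}(R_s[X][Y],(Y))$ and apply Corollary~\ref{eo6} over the base ring $R[X]$ with variable $Y$; this yields $\delta(X,Y)\in\ESp_{2n}(R[X,Y],(Y))$ with $\gamma(X,s^NY)=\delta(X,Y)_s$, and then $\v(X+s^NY)=\v(X)\,\delta(X,Y)$ holds already over $R[X,Y]$ (both sides agree over $R_s[X,Y]$ and at $Y=0$, and the standard argument in \cite{lam} handles the passage). Also, in Step~(ii), ``set $X=0$, $Y=1$ and localize'' should be ``set $X=0$, localize at $b$, then substitute $Y\mapsto X/b$''; but these are exactly the routine details you rightly defer to \cite{lam}.
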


The following  generalizes Proposition \ref{eo7}.

\begin{proposition}\label{eo8}
Let $R = \oplus_{ i \geq 0} R_i$ be a positively graded ring and $R_{+} = \oplus_{ i \geq 1} R_i$ the augmentation ideal of $R$. Let  $q : R \longrightarrow R_0 = R/ R_{+}$ be the quotient map, $\v \in \Um_{2n}(R)$ and $\w =q(\v) \in \Um_{2n}(R_0)$. Suppose $\v_{\mathfrak{m}} \sim_{\ESp_{2n}(R_{\mathfrak{m}})} \w_{\mathfrak{m}}$ for all maximal ideals $\mathfrak{m}$ of $R_0$. Then $\v \sim_{\ESp_{2n}(R)} \w$.
\end{proposition}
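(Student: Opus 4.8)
The plan is to deduce the statement from Proposition \ref{eo7} by the homogenization trick, which converts the grading into an extra variable. Using the homogeneous decomposition $r = \sum_{i \geq 0} r_i$ of an element $r \in R$ (with $r_i \in R_i$), define $\phi : R \to R[X]$ by $\phi(r) = \sum_{i \geq 0} r_i X^i$. The defining property of the grading shows that $\phi$ is a ring homomorphism, and evidently $\phi(r)|_{X=1} = r$ while $\phi(r)|_{X=0} = r_0$, the latter being the image of $q(r) \in R_0$ under the inclusion $R_0 \hookrightarrow R$. Applying $\phi$ entrywise I set $\v'(X) := \phi(\v) \in R[X]^{2n}$; if $\langle \v, \u \rangle = 1$ then $\langle \v'(X), \phi(\u) \rangle = \phi(1) = 1$, so $\v'(X) \in \Um_{2n}(R[X])$, and by construction $\v'(1) = \v$ and $\v'(0) = \w$ (regarded in $\Um_{2n}(R)$ via $R_0 \hookrightarrow R$).

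The next step is to check the local hypothesis of Proposition \ref{eo7} with $S := R_0$, viewed as a subring of $R$. Fix a maximal ideal $\mathfrak{m}$ of $R_0$. Since $R_0 \setminus \mathfrak{m}$ consists of homogeneous elements of degree $0$, the localization $R_{\mathfrak{m}} = (R_0 \setminus \mathfrak{m})^{-1}R$ is again positively graded, with $(R_{\mathfrak{m}})_0 = (R_0)_{\mathfrak{m}}$, and $\phi$ localizes to the analogous homogenization map $\phi_{\mathfrak{m}} : R_{\mathfrak{m}} \to R_{\mathfrak{m}}[X]$ satisfying $\phi_{\mathfrak{m}}(\v_{\mathfrak{m}}) = \v'(X)_{\mathfrak{m}}$. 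By hypothesis there is $\varepsilon \in \ESp_{2n}(R_{\mathfrak{m}})$ with $\v_{\mathfrak{m}} = \w_{\mathfrak{m}}\varepsilon$; applying $\phi_{\mathfrak{m}}$ and using that $\w_{\mathfrak{m}}$ is homogeneous of degree $0$, hence unchanged by $\phi_{\mathfrak{m}}$, gives
$$\v'(X)_{\mathfrak{m}} = \phi_{\mathfrak{m}}(\w_{\mathfrak{m}})\,\phi_{\mathfrak{m}}(\varepsilon) = \w_{\mathfrak{m}}\,\phi_{\mathfrak{m}}(\varepsilon),$$
with $\phi_{\mathfrak{m}}(\varepsilon) \in \ESp_{2n}(R_{\mathfrak{m}}[X])$ because $\phi_{\mathfrak{m}}$ is a ring homomorphism. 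Hence $\v'(X)_{\mathfrak{m}} \sim_{\ESp_{2n}(R_{\mathfrak{m}}[X])} \w_{\mathfrak{m}} = \v'(0)_{\mathfrak{m}}$ for every maximal ideal $\mathfrak{m}$ of $R_0$.

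Finally I would invoke Proposition \ref{eo7} for $\v'(X)$ with $S = R_0$: it yields $\mathfrak{A} = \mathfrak{B} = R_0$, and in particular $\v'(X) \sim_{\ESp_{2n}(R[X])} \v'(0)$, say $\v'(X) = \v'(0)\,\eta(X)$ with $\eta(X) \in \ESp_{2n}(R[X])$. Specializing $X = 1$ then gives $\v = \v'(1) = \v'(0)\,\eta(1) = \w\,\eta(1)$ with $\eta(1) \in \ESp_{2n}(R)$, i.e. $\v \sim_{\ESp_{2n}(R)} \w$, as wanted. The argument is essentially formal once $\phi$ is in place; the only point that needs a moment's attention is that localizing $R$ at the homogeneous multiplicative set $R_0 \setminus \mathfrak{m}$ preserves the grading and is compatible with $\phi$, so that the global hypothesis on $\v$ transports correctly into the local hypothesis on $\v'(X)$ demanded by Proposition \ref{eo7}. (As usual we assume $n \geq 2$ so that Proposition \ref{eo7} applies.)
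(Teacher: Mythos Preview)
Your proof is correct and follows essentially the same route as the paper: define the homogenization map $R \to R[X]$ sending $\sum r_i$ to $\sum r_i X^i$, lift $\v$ to $\v'(X)$, verify the local hypothesis of Proposition~\ref{eo7} over the subring $R_0$, and then specialize at $X=1$. Your write-up is in fact more explicit than the paper's about why the local hypothesis $\v'(X)_{\mathfrak m}\sim_{\ESp_{2n}(R_{\mathfrak m}[X])}\v'(0)_{\mathfrak m}$ follows from the given hypothesis (namely, by applying the ring homomorphism $\phi_{\mathfrak m}$ to the relation $\v_{\mathfrak m}=\w_{\mathfrak m}\varepsilon$), which the paper leaves as ``by our hypothesis''.
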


\begin{proof}
Let $\phi : R[T] \twoheadrightarrow R$ be the evaluation map sending $T$ to $1$. Then $\phi$ has a section $\psi : R \hookrightarrow R[T]$ defined by $\psi(\sum_{i = 0}^r x_i) = \sum_{i = 0}^r x_iT^i$, deg $x_i = i$ i.e. $\phi \psi = id$.

Let $\u(T) = \psi(\v) \in \Um_{2n}(R[T])$. Then by our hypothesis $\u(T)_{\mathfrak{m}} \sim_{\ESp_{2n}(R_{\mathfrak{m}}[T])} \u(0)_{\mathfrak{m}}$ for all maximal ideals $\mathfrak{m}$ of $R_0$. Therefore by Proposition \ref{eo7} we have $\u(T) \sim_{\ESp_{2n}(R[T])} \u(0)$. The result now follows taking the image of both sides under $\phi$.
\end{proof}

%
We shall now prove the main result of this section extending (\cite{cr}, Theorem 5.5).

\begin{theorem}\label{eo11}
Let $\v \in \Um_{2n}(R, I)$. Then  $ \v\E_{2n}(R, I) = \v\ESp_{2n}(R, I)$.
\end{theorem}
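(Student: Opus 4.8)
The plan is as follows. The inclusion $\v\ESp_{2n}(R,I)\subseteq\v\E_{2n}(R,I)$ is immediate: each generator $se_{ij}(z)$ of $\ESp_{2n}(I)$, $z\in I$, is a product of one or two elementary matrices with entries in $I$, so $\ESp_{2n}(I)\subseteq\E_{2n}(I)$, and since $\ESp_{2n}(R)\subseteq\E_{2n}(R)$ with $\E_{2n}(R,I)$ normal in $\E_{2n}(R)$, passing to normal closures gives $\ESp_{2n}(R,I)\subseteq\E_{2n}(R,I)$. For the reverse inclusion a standard propagation argument reduces us to showing $\v\varepsilon\in\v\ESp_{2n}(R,I)$ for every $\v\in\Um_{2n}(R,I)$ and every generator $\varepsilon=\gamma E_{ij}(x)\gamma^{-1}$ of $\E_{2n}(R,I)$ (with $x\in I$, $\gamma\in\E_{2n}(R)$): if this holds for all $\v$ and all such $\varepsilon$, then for a product $\varepsilon_1\cdots\varepsilon_m$ one uses $\v\varepsilon_1\cdots\varepsilon_m=(\v\varepsilon_1)\varepsilon_2\cdots\varepsilon_m$ and induction on $m$, noting that inverses of generators have the same shape. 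A direct computation gives $\v\varepsilon=\v+xur$, where $u=(\v\gamma)_i\in R$ and $r=e_j\gamma^{-1}\in\Um_{2n}(R)$.

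I would then linearize: set $\v(X)=\v\gamma E_{ij}(Xx)\gamma^{-1}=\v+Xxur$, a unimodular row over $R[X]$ (it is $\v$ times an invertible matrix), congruent to $\v$ modulo $IR[X]$, with $\v(0)=\v$ and $\v(1)=\v\varepsilon$. It suffices to prove $\v(X)\sim_{\ESp_{2n}(R[X],\,IR[X])}\v(0)$ and then specialize $X\mapsto1$. Since Propositions \ref{eo7} and \ref{eo8} are phrased for the absolute elementary symplectic group, I would first pass to the excision ring $R^*=\ZZ\oplus I$, $I^*=0\oplus I$ (Definition \ref{7a}), whose quotient $R^*/I^*$ is a retract of $R^*$, so that by Lemma \ref{lem:august15} the relative groups over $R^*$ coincide with the absolute ones intersected with the congruence subgroups modulo $I^*$. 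After lifting $\v\mapsto\tilde\v\in\Um_{2n}(R^*,I^*)$ and $\varepsilon\mapsto\tilde\varepsilon\in\E_{2n}(R^*,I^*)$, it is enough to prove the corresponding equivalence over $R^*$ and push it down along $\mathfrak f$.

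Over $R^*$ I would invoke the local--global principle: by Proposition \ref{eo7} (applied with the subring $R^*$) it suffices to check, at each maximal ideal $\mathfrak m$ of $R^*$, that $\tilde\v(X)_\mathfrak m$ is carried to $\tilde\v(0)_\mathfrak m$ by an element of $\ESp_{2n}(R^*_\mathfrak m[X])$ trivial at $X=0$ and, to retain the congruence, lying in the relative group. Over the local ring $L=R^*_\mathfrak m$ with ideal $\mathfrak a=I^*_\mathfrak m$, since $L$ is local one has $\Um_{2n}(L,\mathfrak a)=e_1\E_{2n}(L,\mathfrak a)$, so by Lemma \ref{eo10a}, $\tilde\v(0)_\mathfrak m\in e_1\ESp_{2n}(L,\mathfrak a)$; after a conjugation by $\ESp_{2n}(L,\mathfrak a)$ we may assume $\tilde\v(0)_\mathfrak m=e_1$ and $\tilde\v(X)_\mathfrak m=e_1+Xcr_0$ with $c\in\mathfrak a$, $r_0\in\Um_{2n}(L)$, this row still unimodular over $L[X]$. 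Using Lemma \ref{eo10a} again over $L[X]$, one is reduced to bringing $e_1+Xcr_0$ into $e_1\E_{2n}(L[X],\mathfrak a[X])$ by a conjugator trivial at $X=0$; this is a direct elementary computation once one uses that $r_0$ is completable over the local ring $L$ and that the unimodularity of $e_1+Xcr_0$ forces the relevant coordinate to be invertible, and it may alternatively be done symplectically via the transvection Lemmas \ref{eo3} and \ref{eo4}.

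Patching these local data by Proposition \ref{eo7} (or \ref{eo8}) produces $\tilde\v(X)\sim_{\ESp_{2n}(R^*[X],\,I^*[X])}\tilde\v(0)$; applying $\mathfrak f$ and setting $X=1$ gives $\v\varepsilon\in\v\ESp_{2n}(R,I)$, completing the proof. I expect the main obstacle to be exactly the \emph{relativization} of the local--global step: Propositions \ref{eo7} and \ref{eo8} only deliver an absolute conjugating symplectic matrix, and one must guarantee that the patched global one is congruent to the identity modulo $I^*$; the detour through the excision ring together with Lemma \ref{lem:august15} is precisely what makes this possible. The secondary, more computational difficulty is the local normalization of the linear homotopy $e_1+Xcr_0$; switching freely between $\E$ and $\ESp$ there by Lemma \ref{eo10a} is what circumvents the division by $2$ that forced the extra hypothesis $2R=R$ in \cite{cr}.
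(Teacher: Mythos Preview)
Your high-level plan---linearize, apply the local--global principle for $\ESp$ (Propositions \ref{eo7}, \ref{eo8}), and use Vaserstein's $e_1\E_{2n}=e_1\ESp_{2n}$ over the localizations---is exactly the paper's strategy. The difference, and the place where your argument breaks, is in how you obtain the \emph{relative} conclusion.

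You propose to lift $\varepsilon=\gamma E_{ij}(x)\gamma^{-1}$ to $\tilde\varepsilon\in\E_{2n}(\ZZ\oplus I,\,0\oplus I)$ and run the argument there. But $\mathfrak f:\ZZ\oplus I\to R$ has image $\ZZ\cdot 1_R+I$, which in general is a proper subring of $R$; the conjugator $\gamma\in\E_{2n}(R)$ therefore has no preimage in $\E_{2n}(\ZZ\oplus I)$, and your lift $\tilde\varepsilon$ simply does not exist. Switching to $R\oplus I$ cures the lifting of $\gamma$, but the subsequent step---upgrading the absolute $\ESp_{2n}(R^*[X])$-equivalence produced by Proposition \ref{eo8} to an $\ESp_{2n}(R^*[X],\,I^*R^*[X])$-equivalence via Lemma \ref{lem:august15}---still does not go through: that lemma lets you test membership of a \emph{given} matrix in the relative group, but here the conjugator $B(X)$ satisfies only $e_1\overline{B(X)}=e_1$ modulo $I^*$, not $\overline{B(X)}=I_{2n}$, and correcting it by a lift of $\overline{B(X)}$ spoils the equation $\tilde\v(X)B(X)=\tilde\v(0)$.

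The paper sidesteps all of this with a single device you missed. Rather than inserting a variable multiplied by $x$ and evaluating at $1$, it replaces each $x_i\in I$ in the product $\alpha=\prod\gamma_iE_{s_it_i}(x_i)\gamma_i^{-1}$ by an indeterminate $X_i$, works with $W(X)=\v\,A(X)$ over $R[X_1,\dots,X_m]$, and applies Proposition \ref{eo8} to the graded ring $R[X]$ to get $W(X)B(X)=\v$ with $B(X)\in\ESp_{2n}(R[X],(X))$ after normalizing at $X=0$. Now one evaluates at $X_i=x_i\in I$: the evaluation sends the ideal $(X_1,\dots,X_m)$ into $I$, hence $B(x)\in\ESp_{2n}(R,I)$ automatically. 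No excision ring, no retract trick, and no relative local--global principle are needed---relativity is encoded in the choice of ideal $(X)$ and recovered by specialization. In particular, had you written $\v(X)=\v\gamma E_{ij}(X)\gamma^{-1}$ and evaluated at $X=x\in I$ (instead of $\v(X)=\v\gamma E_{ij}(Xx)\gamma^{-1}$ evaluated at $X=1$), your single-generator reduction would already have worked without any excision detour.
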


\begin{proof}
We may assume that $n \geq 2$ as for $n = 1$ we have $\E_{2n}(R, I) = \ESp_{2n}(R, I)$.
Clearly  $\v\ESp_{2n}(R, I) \subset \v\E_{2n}(R, I)$. So we only need to show that $\v\E_{2n}(R, I)$ is a subset of $\v\ESp_{2n}(R, I)$. Let $\w \in \v\E_{2n}(R, I)$. Then there exists $\alpha \in \E_{2n}(R, I)$ such that $\v \alpha = \w$. Let $\alpha = \prod_{i = 1}^{m} \gamma_i E_{s_i t_i}(x_i) \gamma_i^{-1}, x_i \in I, \gamma_i \in \E_{2n}(R)$. We define $A(X_1, X_2, \ldots, X_m) = \prod_{i = 1}^{m} \gamma_i E_{s_i t_i}(X_i) \gamma_i^{-1} \in \E_{2n}(R[X_1, X_2, \ldots, X_m])$. Let $X = (X_1, X_2, \ldots, X_m)$ denote the multi variable.
Now  $W(X) = \v A(X) \in \Um_{2n}(R[X])$ such that $W(x_1, x_2, \ldots, x_m) = \w$ and $W({\bf 0}) = \v$, ${\bf 0} = (0, 0, \ldots, 0)$. 

Now for any maximal ideal $\mathfrak{m}$ in $R$ we have $W(X)_{\mathfrak{m}} \sim_{\E_{2n}(R_{\mathfrak{m}}[X])} \v_{\mathfrak{m}}$ and $\v_{\mathfrak{m}} \sim_{\E_{2n}(R_{\mathfrak{m}})} e_1$.  Therefore $W(X)_{\mathfrak{m}} \in e_1\E_{2n}(R_{\mathfrak{m}}[X]) = e_1\ESp_{2n}(R_{\mathfrak{m}}[X])$ by Lemma \ref{eo10}. So we have $W(X)_{\mathfrak{m}} \sim_{\ESp_{2n}(R_{\mathfrak{m}}[X])} W({\bf 0})_{\mathfrak{m}}$. Therefore by Proposition \ref{eo8} we have  $W(X) \sim_{\ESp_{2n}(R[X])} W({\bf 0}) = \v$ i.e. there exists $B(X) \in \ESp_{2n}(R[X])$ such that $W(X)B(X) = \v$. Replacing $B(X)$ by $B(X)B({\bf 0})^{-1}$ if necessary, we may assume $B(X) \in \ESp_{2n}(R[X], (X))$. Now evaluating both sides at $X = (x_1, x_2, \ldots, x_m)$ we have $\w \in \v\ESp_{2n}(R, I)$.
\end{proof}

%

\section{improved $K_1$ stability}
Let $A$ be a non singular affine algebra of dimension $d \geq 2$ over a perfect $C_1$ field $k$. In (\cite{iis}, Theorem 1) it is shown that $\SL_n(A)/ \E_n(A) = \K_1(A)$ for $ n \geq d + 1$. Later in (\cite{sfmaa}, Corollary 7.7) this injective stability estimate is improved to $n = d$ when $k$ is algebraically closed, $d \geq 3$ and $d!k = k$. In this section we shall show that these results hold true for relative $K_1$ with respect to a principal ideal $I$ of $A$.

\begin{theorem} \label{10}
Let $A$ be an affine algebra of dimension $d \geq 2$ over a perfect $C_1$ field $k$. Then $\Um_{d+1}(A, I) = e_1\SL_{d+1}(A, I)$ for any ideal $I$ in $A$.
\end{theorem}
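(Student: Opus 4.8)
The plan is to deduce the relative statement from its absolute counterpart, using the excision algebra exactly as in the proof of Lemma~\ref{eo10a}. The absolute case here is the instance $I = A$, namely $\Um_{d+1}(A) = e_1\SL_{d+1}(A)$, which for an affine algebra of dimension $d$ over a perfect $C_1$ field is known by \cite{iis}. Since $A$ is Noetherian, the ideal $I$ is finitely generated, so by Proposition~\ref{9a} the excision algebra $C := A \oplus I$ is again an affine algebra of dimension $d$ over $k$; write $I^* = 0 \oplus I$. Given $\v \in \Um_{d+1}(A, I)$, take its lift $\tilde{\v} \in \Um_{d+1}(C, I^*)$ as in Remark~\ref{9a1a}, and recall that the evaluation map $\mathfrak{g} : C \to A$, $(x,i) \mapsto x+i$, sends $\tilde{\v}$ to $\v$ and carries $I^*$ onto $I$.

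First I would apply the absolute case to $C$: since $C$ is affine of dimension $d$ over $k$, $\Um_{d+1}(C) = e_1\SL_{d+1}(C)$, so in particular $\tilde{\v} = e_1\tilde{\alpha}$ for some $\tilde{\alpha} \in \SL_{d+1}(C)$. The remaining task is to modify $\tilde{\alpha}$ into a matrix congruent to $I_{d+1}$ modulo $I^*$ without changing its first row, and then to push everything down to $A$. Let $q : C \twoheadrightarrow C/I^* = A$ be the quotient map, with section $s$. Reducing $\tilde{\v} = e_1\tilde{\alpha}$ modulo $I^*$ shows that $q(\tilde{\alpha})$ has first row $e_1$; put $\tilde{\beta} = s(q(\tilde{\alpha})) \in \SL_{d+1}(C)$, which still has first row $e_1$ and satisfies $q(\tilde{\beta}) = q(\tilde{\alpha})$. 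Then $\tilde{\gamma} := \tilde{\beta}^{-1}\tilde{\alpha}$ lies in $\SL_{d+1}(C, I^*)$, because $q(\tilde{\gamma}) = I_{d+1}$, and $e_1\tilde{\gamma} = \tilde{\v}$ since $e_1\tilde{\beta} = e_1$ forces $e_1\tilde{\beta}^{-1} = e_1$. Applying $\mathfrak{g}$ then gives $\v = e_1\,\mathfrak{g}(\tilde{\gamma})$ with $\mathfrak{g}(\tilde{\gamma}) \in \SL_{d+1}(A, I)$, i.e. $\v \in e_1\SL_{d+1}(A, I)$; the reverse inclusion $e_1\SL_{d+1}(A,I) \subseteq \Um_{d+1}(A,I)$ is trivial.

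I do not expect a serious obstacle: once one checks that the excision construction stays inside the class of $d$-dimensional affine algebras over $k$ (Proposition~\ref{9a}), the argument is the standard ``lift to the excision algebra, solve there, descend through a section'' pattern already employed for Lemma~\ref{eo10a}. The only points needing care are that the absolute result of \cite{iis} is available for the possibly singular algebra $C$, and keeping the quotient map $q$ (used to land in the relative group $\SL_{d+1}(C, I^*)$) distinct from the evaluation map $\mathfrak{g}$ (used to recover $\v$) --- both of which, as it happens, admit the common section $x \mapsto (x,0)$.
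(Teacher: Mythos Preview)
Your proof is correct and follows essentially the same route as the paper: lift to the excision algebra $C = A \oplus I$ (which is again affine of dimension $d$ over $k$ by Proposition~\ref{9a}), apply the absolute completability result \cite[Proposition~3.1]{iis} over $C$, correct the completing matrix by its image under the quotient map to force it into $\SL_{d+1}(C, I^*)$, and then push down via $\mathfrak{g}$. The only cosmetic difference is that the paper writes the completion as $\tilde{\v}\varepsilon = e_1$ and replaces $\varepsilon$ by $\varepsilon\,\overline{\varepsilon}^{-1}$, whereas you write $\tilde{\v} = e_1\tilde{\alpha}$ and pass to $\tilde{\beta}^{-1}\tilde{\alpha}$; your explicit separation of $q$ and $\mathfrak{g}$ is a welcome clarification of what the paper does implicitly.
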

\begin{proof}
 Let $\v \in \Um_{d+1}(A, I)$ and  $\tilde{\v} \in \Um_{d+1}( A \oplus I, 0 \oplus I)$ be a lift of $\v$. By Proposition \ref{9a}, $A \oplus I$ is also an affine algebra of dimension $d$ over $k$. So by (\cite{iis}, Proposition 3.1) there exists  $\varepsilon \in \SL_{d+1}(A \oplus I)$ such that $\tilde{\v} \varepsilon = e_1$. Going modulo $0 \oplus I$ we have $e_ 1\overline{\varepsilon}  = e_1, \overline{\varepsilon} \in \SL_{d+1}( A)$. Now replacing $\varepsilon$ by $\varepsilon \overline{\varepsilon}^{-1}$ we may assume that $\varepsilon \in \SL_{d+1}( A \oplus I, 0 \oplus I)$ satisfying $\tilde{\v} \varepsilon = e_1$. Taking projection onto $A$ we have $\v\varepsilon = e_1$ for some $\varepsilon \in \SL_{d+1}(A, I)$ i.e. $\v \in e_1\SL_{d+1}(A, I)$.
\end{proof}

The following result is implicit in (\cite{sfmaa}, \S 7). 
 
\begin{lemma}\label{15}
Let $A$ be a non singular affine algebra of dimension $d \geq 3$ over an algebraically closed field $k$, $\frac{1}{n} \in k$. Let $\v \in \Um_d(A)$. Then $\v$ can be transformed to a  row of the form  $(w_0, w_1, w_2, \ldots, w_{d-1}^n)$ for some $\w = (w_0, w_1, w_2, \ldots, w_{d-1}) \in  \Um_d(A)$ by elementary operations.
\end{lemma}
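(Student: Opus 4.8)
The plan is to reduce the problem to a ``local orientation'' statement about the Euler class / obstruction to factoriality and then apply the machinery of Fasel--Rao--Swan as recorded in \cite{sfmaa}, \S 7. First I would recall that, since $A$ is a non-singular affine algebra of dimension $d$ over an algebraically closed field with $\frac{1}{n}\in k$, one has strong control over the Mennicke-type symbol $\MSE_d(A)$: the relevant completion and symmetry results (nilpotence of the group structure on $\Um_d(A)/\E_d(A)$, and the fact that $d!$-divisibility makes the group uniquely divisible in the appropriate range) show that every class in $\Um_d(A)/\E_d(A)$ is represented by a factorial row $\psi_{(d-1)!}(\w)=(w_0,w_1,w_2^2,\dots,w_{d-1}^{d-1})$. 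The statement to be proved is a slightly weaker assertion: only the last coordinate needs to be an $n$-th power, not a full factorial row, so it suffices to extract from the factoriality argument just the final step, where one arranges divisibility in a single fixed coordinate.

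The key steps, in order, would be: (i) pass to the unimodular row $\v=(v_0,\dots,v_{d-1})\in\Um_d(A)$ and consider its class $[\v]\in\Um_d(A)/\E_d(A)$; (ii) invoke the Fasel--Rao--Swan results (\cite{sfmaa}, \S 7 --- in particular the surjectivity of the map from factorial rows, using $d\geq 3$ and $\frac1n\in k$ with $n$ the relevant factorial-type integer) to produce $\w'\in\Um_d(A)$ with $[\v]=[\psi(\w')]$ for a row all of whose coordinates beyond the second are powers; (iii) observe that $(w_0,w_1,w_2^2,\dots,w_{d-1}^{d-1})$ can, by a further elementary change of the intermediate variables (replacing $w_j$ by a suitable unit-adjusted representative, using that the relevant exponents divide, or simply absorbing lower powers into new coordinates), be brought to the shape $(w_0,w_1,w_2,\dots,w_{d-1}^{n})$ where only the last coordinate carries the $n$-th power; (iv) conclude that $\v$ is elementarily equivalent to a row of the desired form. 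Throughout, the hypothesis $\frac1n\in k$ is what licenses the divisibility manipulations, and $d\geq 3$ is needed so that $\E_d$ acts transitively enough (Lemma \ref{1}-type stability) and the symbol group has the required structure.

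The main obstacle I expect is step (ii): extracting a clean, self-contained statement from \cite{sfmaa}, \S 7. That section proves the injective stability estimate $n=d$ for $K_1$ by showing every class of $\Um_d(A)/\E_d(A)$ is a factorial row and then that factorial rows are trivial; the reduction to factorial form is exactly what is needed here, but it is woven into a longer argument involving the nilpotent structure of the orbit set, Suslin's $\beta$-construction, and cohomological input about the Chow group / Euler class group in dimension $d$. The delicate point is verifying that the divisibility hypothesis actually in force in \cite{sfmaa} matches ``$\frac1n\in k$'' for the $n$ appearing in the present statement, and that no smoothness or orientation hypothesis beyond ``non-singular'' is secretly used. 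Once that bookkeeping is settled, steps (iii) and (iv) are routine: reshaping $(w_0,w_1,w_2^2,\dots,w_{d-1}^{d-1})$ into $(w_0,\dots,w_{d-1}^n)$ is a matter of elementary column operations and relabeling, since an $n$-th power in a single coordinate is a weaker demand than the full factorial pattern.
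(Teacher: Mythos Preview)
The paper itself gives no proof of this lemma; it simply records that the result is implicit in \cite{sfmaa}, \S 7. So in that sense you are doing what the paper does, namely deferring to Fasel--Rao--Swan. However, your plan for extracting the statement from \cite{sfmaa} has a genuine gap, and it also runs the logic backwards relative to how the paper (and FRS) are organized.

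The gap is in your step (iii). You assert that reshaping $(w_0,w_1,w_2^2,\dots,w_{d-1}^{d-1})$ into $(w_0,w_1,\dots,w_{d-1}^{n})$ is ``routine'' because ``an $n$-th power in a single coordinate is a weaker demand than the full factorial pattern.'' It is not. Via Lemma~\ref{16} one can replace the factorial row by $(w_0,w_1,\dots,w_{d-1}^{(d-1)!})$, and if $n\mid (d-1)!$ one can indeed relabel $w_{d-1}^{(d-1)!/n}$ as a new last coordinate. But the hypothesis here is only $\tfrac{1}{n}\in k$ with $n$ arbitrary; for $n$ a prime larger than $d-1$ there is no way to manufacture an $n$-th power out of a $(d-1)!$-th power by elementary moves and relabeling. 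So step (iii) simply fails in general, and your parenthetical ``replacing $w_j$ by a suitable unit-adjusted representative, using that the relevant exponents divide'' is exactly the unjustified step.

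There is also a structural reversal. In this paper, Lemma~\ref{15} is the input used (with $n=(d-1)!$) to prove Theorem~\ref{17}, the factorial-row representation. Your plan uses the factorial-row representation to prove Lemma~\ref{15}, which is circular within the paper; and even if you cite \cite{sfmaa} directly for the factorial form, you then need the stronger hypothesis $\tfrac{1}{(d-1)!}\in k$ rather than $\tfrac{1}{n}\in k$. What is actually implicit in \cite{sfmaa}, \S 7 is the $n$-divisibility of the van der Kallen group $\Um_d(A)/\E_d(A)$ under $\tfrac{1}{n}\in k$, together with the standard identification (Vaserstein's rule) that $n\cdot[(w_0,\dots,w_{d-1})]=[(w_0,\dots,w_{d-2},w_{d-1}^{n})]$ in that group. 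Divisibility says every class is $n$ times some class, which immediately gives the desired form. That is the statement you should extract from \cite{sfmaa}; the detour through factorial rows is both unnecessary and insufficient.
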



\begin{lemma}\label{16}
Let $\v = (v_0, v_1, v_2, \ldots, v_n) \in \Um_{n+1}(R, I)$, $n \geq 2$. Then
 $$(v_0, v_1, v_2^2, \ldots, v_n^n) \sim_{\E_{n+1}(R, I)} (v_0, v_1,\ldots, v_n^{n!}).$$
\end{lemma}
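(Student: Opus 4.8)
The plan is induction on $n$, the idea being to push all the ``excess'' exponents onto the last coordinate. When $n=2$ there is nothing to prove, since $2!=2$. For $n\geq 3$ the left hand side reads $(v_0,v_1,v_2^2,v_3^3,\ldots,v_{n-1}^{n-1},v_n^n)$, and for $j=2,3,\ldots,n-1$ in turn I would transfer the $j$-th power sitting on $v_j$ onto the last slot by means of a power-transfer move
$$(\ldots,x^k,\ldots,y,\ldots)\ \sim_{\E_{n+1}(R,I)}\ (\ldots,x,\ldots,y^k,\ldots),$$
valid for a unimodular row of length $\geq 3$ and any two of its slots. Taking $x=v_j$, $k=j$ and $y$ the current last coordinate, after the step $j=2$ the last coordinate is $v_n^{2n}$, after $j=3$ it is $v_n^{6n}$, and after the last step $j=n-1$ its exponent is $n\cdot 2\cdot 3\cdots(n-1)=n!$, while $v_2,\ldots,v_{n-1}$ have all fallen back to the first power. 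This is precisely the right hand side $(v_0,v_1,v_2,\ldots,v_{n-1},v_n^{n!})$. (The shape of the target is forced: the product of the exponents is unchanged by all the moves used, and it equals $n!$ on both sides, so with no divisibility hypothesis available this is the only reachable row of the indicated form.)

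The one substantial ingredient is the power-transfer move in its \emph{relative} form: for $\u=(u_0,u_1,\ldots,u_m)\in\Um_{m+1}(R,I)$ with $m\geq 2$ and any two slots $i\neq l$,
$$(\ldots,u_i^k,\ldots,u_l,\ldots)\ \sim_{\E_{m+1}(R,I)}\ (\ldots,u_i,\ldots,u_l^k,\ldots).$$
Reducing the pair of active slots to a fixed pair costs only a conjugation by a permutation matrix, which is harmless since conjugation by any permutation matrix preserves $\E_{m+1}(R,I)$ (for even permutations by normality of $\E_{m+1}(R,I)$ in $\E_{m+1}(R)$, and one checks directly that conjugation by a sign matrix does as well); and since in the application the two active slots always carry coordinates lying in $I$ while the first slot is never touched, every intermediate row stays congruent to $e_1$ modulo $I$. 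The absolute form of this move is classical --- it is the unstable form of the bimultiplicativity of the Mennicke symbol, in the Suslin--Vaserstein circle of ideas (see \cite{lam} or \cite{V4}) --- and is proved by realizing the relevant $\SL_2$-identities inside $\E_{m+1}$.

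The step I expect to be the real obstacle is making the power-transfer move relative, since in general $\E_{m+1}(R)\cap\SL_{m+1}(R,I)$ is strictly larger than $\E_{m+1}(R,I)$ (the two coincide only in the presence of a retract, cf. Lemma \ref{lem:august15}), so it is not enough to lift $\u$ to the excision algebra $R\oplus I$, run the absolute argument there, and observe that the resulting equivalence matrix reduces to the identity modulo $I$ --- the correcting matrix furnished by the retract fixes $e_1$ but not the lifted row. Instead one must run the classical Mennicke argument directly over $R$, feeding it a dual row $\w\in\Um_{m+1}(R,I)$ with $\langle\u,\w\rangle=1$ (available by the remark in Section~2 that every relative unimodular row has a relative dual), and arrange the intermediate data so that every elementary factor produced is of the form $E_{pq}(\lambda)$ with $\lambda\in I$ --- which is possible precisely because $u_1,\ldots,u_m$ and $w_1,\ldots,w_m$ lie in $I$ while $u_0,w_0\equiv 1$. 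This is routine but delicate bookkeeping; granting it, the induction above finishes the proof immediately.
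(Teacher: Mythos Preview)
Your proposal is not wrong in spirit, but you have misdiagnosed the ``real obstacle'' and, as a result, left the proof unfinished precisely where the paper finishes it in one stroke.

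You explicitly reject the excision approach, saying it is ``not enough to lift $\u$ to the excision algebra $R\oplus I$, run the absolute argument there, and observe that the resulting equivalence matrix reduces to the identity modulo $I$.'' You are right that the retract lemma (Lemma~\ref{lem:august15}) alone does not suffice: knowing that $\varepsilon\in\E_{n+1}(\ZZ\oplus I)$ carries one relative row to another does not by itself force $\varepsilon\in\E_{n+1}(\ZZ\oplus I,0\oplus I)$. But the Excision Theorem (Theorem~\ref{7b}) is a stronger statement than the retract lemma: the injectivity of the map
\[
G:\MSE_{n+1}(\ZZ\oplus I,0\oplus I)\longrightarrow\MSE_{n+1}(\ZZ\oplus I)
\]
says exactly that two relative unimodular rows which are absolutely $\E_{n+1}$-equivalent are already relatively $\E_{n+1}$-equivalent. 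This is van der Kallen's theorem, not a formal consequence of the retract.

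The paper's proof is therefore: lift $\v$ to $\tilde\v\in\Um_{n+1}(\ZZ\oplus I,0\oplus I)$; apply Vaserstein's absolute result \cite{V4} over $\ZZ\oplus I$ to get $(\tilde v_0,\tilde v_1,\tilde v_2^2,\ldots,\tilde v_n^n)\sim_{\E_{n+1}(\ZZ\oplus I)}(\tilde v_0,\tilde v_1,\ldots,\tilde v_n^{n!})$; invoke Theorem~\ref{7b} to upgrade this to $\sim_{\E_{n+1}(\ZZ\oplus I,0\oplus I)}$; push down via $\mathfrak f$. Three lines, no bookkeeping.

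Your inductive scheme with power-transfer moves is a perfectly reasonable way to reprove Vaserstein's absolute statement, but the relative version of the power-transfer move that you need is left as ``routine but delicate bookkeeping; granting it\ldots''. That is exactly the content of the lemma, and you have not supplied it. Ironically, the cleanest way to justify your relative power-transfer move is again the Excision Theorem: the absolute move holds over $\ZZ\oplus I$, and Theorem~\ref{7b} promotes it to the relative move. So your route and the paper's converge at the same black box; the paper just applies it once at the end rather than at every inductive step.
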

\begin{proof}

	Let $(\tilde{v_0}, \tilde{v_1}, \ldots, \tilde{v_n}) \in \Um_{n+1}(\ZZ \oplus I)$ be a lift of $\v$ under $\mathfrak{f}$ (see  Definition \ref{7a}). Then by the main theorem in introduction of \cite{V4} we have  $(\tilde{v_0}, \tilde{v_1}, \tilde{v_2}^2 \ldots, \tilde{v_n}^n) \sim_{\E_{n+1}(\ZZ \oplus I)}(\tilde{v_0}, \tilde{v_1}, \ldots, \tilde{v_n}^{n!})$. But both of $(\tilde{v_0}, \tilde{v_1}, \tilde{v_2}^2 \ldots, \tilde{v_n}^n)$, $(\tilde{v_0}, \tilde{v_1}, \ldots, \tilde{v_n}^{n!}) \equiv e_1 \vpmod {0 \oplus I}$. So by the Excision Theorem \ref{7b} we have $(\tilde{v_0}, \tilde{v_1}, \tilde{v_2}^2 \ldots, \tilde{v_n}^n) \sim_{\E_{n+1}(\ZZ \oplus I, 0 \oplus I)}(\tilde{v_0}, \tilde{v_1}, \ldots, \tilde{v_n}^{n!})$. Now the result follows taking the image under $f$ in $\Um_{n+1}(R, I)$.
\end{proof}

\begin{theorem}\label{17}
Let $A$ be a nonsingular affine algebra of dimension $d \geq 4$ over an algebraically closed field $k$, $\frac{1}{d-1!} \in k$ and $I$ an ideal of $A$. Let $\v \in \Um_d(A, I)$. Then $\v$ can be transformed to a factorial row  $\psi_{d -1!}(\w) = (w_0, w_1, w_2^2, \ldots, w_{d-1}^{d-1})$ for some $\w = (w_0, w_1, w_2, \ldots, w_{d-1}) \in  \Um_d(A, I)$ by elementary operations relative to $I$. In particular $\Um_d(A, I) = e_1\SL_d(A, I)$.
\end{theorem}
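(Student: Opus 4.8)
The plan is to reduce to the absolute case treated in \cite{sfmaa} via the excision algebra, then transport the resulting ``$n$-th power'' normal form back to $A$ relative to $I$, and finally upgrade the exponent from $n = d-1$ to the factorial $(d-1)!$ using Lemma \ref{16}. First I would lift $\v = (1+i_1, i_2, \ldots, i_d) \in \Um_d(A, I)$ to $\tilde\v \in \Um_d(A \oplus I, 0 \oplus I)$ as in Remark \ref{9a1a}. By Proposition \ref{9a}, the excision algebra $A \oplus I$ is again a nonsingular affine algebra of dimension $d$ over the algebraically closed field $k$ (nonsingularity needs a small check, but it follows since $A \oplus I$ is, Zariski-locally away from $V(I)$, a product situation and the fibre square is the standard one). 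Since $d \geq 4 \geq 3$ and $\frac{1}{(d-1)!} \in k$ forces $\frac{1}{d-1} \in k$, Lemma \ref{15} applies over $A \oplus I$: the row $\tilde\v$ can be carried by elementary operations to $(\tilde w_0, \tilde w_1, \ldots, \tilde w_{d-1}^{\,d-1})$ for some $\tilde\w \in \Um_d(A \oplus I)$.

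The next step is to make this transformation \emph{relative}. After applying the elementary matrix produced by Lemma \ref{15}, I reduce modulo $0 \oplus I$: the image of $\tilde\v$ is $e_1$, so the image of $\tilde\w$ is $e_1$ as well (the ``$n$-th power'' normal form of $e_1$ is $e_1$, and one can arrange the normalizing elementary matrix over $A = (A\oplus I)/(0\oplus I)$ to be trivial, or simply absorb it). Hence $\tilde\w \in \Um_d(A \oplus I, 0 \oplus I)$, and the connecting elementary matrix, after right-multiplication by its inverse-image-at-$I$, lies in $\E_d(A \oplus I, 0 \oplus I)$ — here I use that $A = (A \oplus I)/(0 \oplus I)$ is a retract of $A \oplus I$ together with Lemma \ref{lem:august15}, exactly as in the proof of Lemma \ref{eo10a}. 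Projecting onto $A$ via $\mathfrak{g}$ then gives $\v \sim_{\E_d(A, I)} (w_0, w_1, \ldots, w_{d-1}^{\,d-1})$ for some $\w = (w_0, \ldots, w_{d-1}) \in \Um_d(A, I)$.

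Now I invoke Lemma \ref{16} with $n = d-1$: applied to the row $\w \in \Um_d(A, I) = \Um_{n+1}(A, I)$ (note $n = d - 1 \geq 3 \geq 2$), it yields
$$(w_0, w_1, w_2^2, \ldots, w_{d-1}^{\,d-1}) \sim_{\E_d(A, I)} (w_0, w_1, \ldots, w_{d-1}^{\,(d-1)!}) = \psi_{(d-1)!}(\w).$$
Chaining the two equivalences gives $\v \sim_{\E_d(A, I)} \psi_{(d-1)!}(\w)$, which is the first assertion. For the final sentence: by \cite{seh} (the discussion following Definition \ref{6a}, specialized to the relative setting), the factorial row $\psi_{(d-1)!}(\w) = \psi_{r!}(\w)$ with $r = d-1 \geq 2$ is the first row of a matrix $\beta_{d-1}(\w, \cdot) \in \SL_d(A)$ which may be taken $\equiv I_d \bmod I$; thus $\psi_{(d-1)!}(\w) \in e_1 \SL_d(A, I)$, and since $e_1 \E_d(A, I) \subseteq e_1 \SL_d(A, I)$ is stable under the $\E_d(A,I)$-action we conclude $\v \in e_1 \SL_d(A, I)$, and conversely $e_1 \SL_d(A, I) \subseteq \Um_d(A, I)$ trivially.

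The main obstacle I anticipate is the relativization bookkeeping in the middle paragraph: Lemma \ref{15} is stated only in the absolute case and only guarantees an \emph{elementary} transformation over $A \oplus I$, so one must argue carefully — via the retract $A \cong (A \oplus I)/(0 \oplus I)$ and Lemma \ref{lem:august15} — that the composite transformation descends to an element of $\E_d(A, I)$ rather than merely $\E_d(A)$, and that the target row $\w$ genuinely lies in $\Um_d(A, I)$ (i.e.\ reduces to $e_1$). A secondary technical point is verifying that $A \oplus I$ inherits nonsingularity and the dimension bound $d \geq 3$ needed to invoke Lemma \ref{15}; both follow from Proposition \ref{9a} and the fibre-product description, but should be remarked on explicitly.
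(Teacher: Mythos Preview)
There is a genuine gap at the very first step: the excision algebra $A \oplus I$ is \emph{not} nonsingular in general, so Lemma \ref{15} cannot be applied to it. Geometrically, $\operatorname{Spec}(A \oplus I)$ is two copies of $\operatorname{Spec} A$ glued along $V(I)$ (it is the fibre product $A \times_{A/I} A$), and such a scheme is singular along the gluing locus whenever $0 \neq I \neq A$; the simplest instance is $A = k[x,y]$, $I = (x)$, where $A \oplus I$ is the coordinate ring of two planes meeting along a line. Your remark that $A \oplus I$ is ``Zariski-locally away from $V(I)$ a product situation'' is exactly the point: away from $V(I)$ it looks like $A \times A$, but along $V(I)$ it is singular. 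Proposition \ref{9a} only asserts that $A \oplus I$ is affine of dimension $d$; it says nothing about regularity.

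The paper's proof circumvents this obstruction by a different route: it does not pass to the excision algebra at all. Instead it uses Swan's Bertini theorem to first adjust $v_0$ (by adding $I$-multiples of the other entries) so that $\overline{A} = A/(v_0)$ is itself smooth of dimension $d-1$, and then applies Lemma \ref{15} to the length-$(d-1)$ row $(\overline{v_1},\ldots,\overline{v_{d-1}})$ over $\overline{A}$ with exponent $n=(d-1)!$. The lift of the resulting elementary matrix back to $A$ is automatically relative to $I$ because one can lift to $\E_{d-1}((\lambda)) \subset \E_{d-1}(I)$ where $v_0 = 1 - \lambda$, so no excision/retract bookkeeping is needed. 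A secondary slip in your write-up: you apply Lemma \ref{15} with $n = d-1$, obtaining $(w_0,\ldots,w_{d-1}^{\,d-1})$, but this row is neither side of the equivalence in Lemma \ref{16}; you would need $n = (d-1)!$ so as to reach $(w_0,\ldots,w_{d-1}^{(d-1)!})$, which is the form Lemma \ref{16} converts into $\psi_{(d-1)!}(\w)$.
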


\begin{proof}
Let $\v = (v_0, v_1, \ldots, v_{d-1}) \in \Um_d(A, I)$ and $v_0 = 1- \lambda, \lambda \in I$. By Swan's version of Bertini's Theorem (\cite{cpmr}) we can add suitable multiple of $\lambda v_1, \lambda v_2, \ldots, \lambda v_d$ to $v_0$ to assume that $\overline{A} = A/(v_0)$ is a smooth affine algebra of dimension $d -1$. So by Lemma \ref{15} we have $w_1, w_2, \ldots, w_{d-1} \in  (\lambda)$  such that $(\overline{w_1}, \overline{w_2}, \ldots, \overline{w_{d-1}}) \in \Um_{d-1}(\overline{A})$ and $(\overline{v_1}, \overline{v_2},  \ldots, \overline{v_{d-1}}) \overline{\varepsilon} = (\overline{w_1}, \overline{w_2}, \ldots, \overline{w_{d-1}}^{d-1!})$  for some $\overline{\varepsilon} \in \E_{d-1}(\overline{A})$. Let $\varepsilon \in \E_{d-1}((\lambda)) \subset \E_{d-1}(I)$ be a lift of $\varepsilon$. Then we have $(v_1, v_2,  \ldots, v_{d-1}) \varepsilon \equiv (w_1, w_2, \ldots, w_{d-1}^{d-1!}) \vpmod{ Iv_0}$. So
\begin{eqnarray*}
(v_0, v_1, \ldots, v_{d-1})
& \sim_{\E_{d}(A, I)}& (v_0, w_1, w_2, \ldots, w_{d-1}^{d-1!})\\
& \sim_{\E_{d}(A, I)} &(w_0, w_1, w_2^2, \ldots, w_{d-1}^{d-1})\, ; v_0 = w_0, \text{by Lemma \ref{16}}.
\end{eqnarray*}
The last assertion is clear since  $\psi_{d -1!}(w)$ is the first row of $\beta(w, w') \in \SL_d(A, I)$ for  $w' \in \Um_d(A, I)$ satisfying $ \langle w, w' \rangle  = 1$.
\end{proof}

We don't know if the above theorem holds true for nonsingular affine algebras of dimension three. We ask the following question.

\begin{question}
Let $A$ be a nonsingular affine algebra of dimension three over an algebraically closed field $k$ such that $1/2 \in k$ and $I$ an ideal of $A$. Then is $\Um_3(A, I) = e_1\SL_3(A, I)$?
\end{question}


The following is proved in (\cite{glgr}, Theorem 3.3).
\begin{theorem}\label{19}
Let $A$ be a regular ring essentially of finite type over a field $k$. Let $A[\underline{T}]$ denote the polynomial algebra $A[T_1, T_2, \ldots, T_n]$ and $(\underline{T})$ the ideal $(T_1, T_2, \ldots, T_n)$ in $A[T_1, T_2, \ldots, T_n]$. If $\alpha(\underline{T}) \in \GL_r(A[\underline{T}], (\underline{T}))$, then $\alpha(\underline{T}) \in \E_r(A[\underline{T}], (\underline{T}))$ for $r \geq 3$.
\end{theorem}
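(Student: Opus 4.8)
The plan is to descend, in three steps, to a polynomial ring over a field, where the assertion follows from Suslin's theorem on the special linear group of polynomial rings together with the retract Lemma~\ref{lem:august15}. The underlying $\K$-theoretic fact is the homotopy invariance of $\K_1$ on regular rings; the work is to make it \emph{unstable}, i.e.\ valid for every $r\ge 3$. First I would reduce from $\GL_r$ to $\SL_r$: since $A$ is regular it is reduced, so $(A[\underline{T}])^{*}=A^{*}$, whence $\det\alpha(\underline{T})$ is a ``constant'' unit; evaluating at $\underline{T}=\mathbf 0$ and using $\alpha(\mathbf 0)=I_r$ forces $\det\alpha(\underline{T})=1$, so we may assume $\alpha(\underline{T})\in\SL_r(A[\underline{T}],(\underline{T}))$.

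Second, I would invoke the relative local--global (Quillen patching) principle for the elementary subgroup, valid for $r\ge 3$: if $\alpha(\underline{T})_{\mathfrak m}\in\E_r(A_{\mathfrak m}[\underline{T}],(\underline{T}))$ for every maximal ideal $\mathfrak m$ of $A$, then $\alpha(\underline{T})\in\E_r(A[\underline{T}],(\underline{T}))$. This is the $\SL_r$ analogue of Propositions~\ref{eo7} and~\ref{eo8}, proved by the same dilation-and-patching argument; one uses that $A$ is a retract of $A[\underline{T}]$ via the augmentation $\underline{T}\mapsto\mathbf 0$, so that by Lemma~\ref{lem:august15} one has $\E_r(A[\underline{T}],(\underline{T}))=\E_r(A[\underline{T}])\cap\SL_r(A[\underline{T}],(\underline{T}))$, which is exactly what makes the patching go through. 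We are thereby reduced to the case where $A$ is a regular local ring essentially of finite type over $k$.

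Third, and this is the technical heart, in the local case I would run Lindel's machinery. After passing to a perfect base field where necessary (via Popescu desingularization, or the imperfect-field refinement of Lindel's lemma), Lindel's \'etale-neighbourhood lemma presents $A$ as a localization of a standard \'etale extension of a localization $R_0=k'[Y_1,\dots,Y_m]_{\mathfrak n}$ of a polynomial ring over a field $k'$, together with the splitting $A\otimes_{R_0}A\cong A\times A''$ coming from \'etaleness. Combining this splitting with a Horrocks-type patching argument transfers the membership question over $A[\underline{T}]$ to the corresponding one over the polynomial ring $B:=k'[Y_1,\dots,Y_m,T_1,\dots,T_n]$ over the field $k'$. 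Over $B$ one has $\SL_r(B)=\E_r(B)$ for $r\ge 3$ by Suslin's theorem, and since $k'[Y_1,\dots,Y_m]$ is a retract of $B$, a final use of Lemma~\ref{lem:august15} gives $\E_r(B,(\underline{T}))=\E_r(B)\cap\SL_r(B,(\underline{T}))$; hence every element of $\SL_r(B,(\underline{T}))$ lies in $\E_r(B,(\underline{T}))$. Unwinding the reductions then yields $\alpha(\underline{T})\in\E_r(A[\underline{T}],(\underline{T}))$.

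The main obstacle is the third step: one must push the relative condition ``$\equiv I_r\pmod{(\underline{T})}$'' through the Lindel \'etale-transfer, the Horrocks-type gluing, and the clearing of denominators without destroying it, and one must deal with non-perfect base fields, where Lindel's original argument does not apply verbatim and one invokes instead Popescu's theorem or the imperfect-field version of the Lindel lemma. Making the relative Quillen-patching principle of the second step precise is the other point requiring care, but it runs entirely parallel to the symplectic patching carried out in Section~3.
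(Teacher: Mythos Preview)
The paper does not supply a proof of this theorem at all: it is quoted verbatim as \cite[Theorem~3.3]{glgr} (Vorst), with no argument given. So there is no ``paper's own proof'' to compare your proposal against.

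Your outline is in fact the standard route to such results and is close in spirit to Vorst's original argument: reduce $\GL_r$ to $\SL_r$ using that $A$ is reduced, apply a relative Quillen--Suslin local--global principle to pass to the regular local case, and then descend to a polynomial ring over a field where Suslin's $\SL_r=\E_r$ theorem applies, using the retract observation (Lemma~\ref{lem:august15}) to handle the relative condition. The first two steps are unproblematic. The third step, as you yourself note, is where the real work lies; invoking ``Lindel's machinery'' and ``Popescu desingularization'' in one breath is more of a signpost than a proof, and Vorst's 1981 paper of course predates Popescu's theorem and proceeds by a more hands-on inductive argument on the structure of regular $k$-algebras essentially of finite type. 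If you intend to include a self-contained proof rather than a citation, you would need to spell out exactly which transfer lemma you are using and verify that the relative condition $\alpha(\mathbf 0)=I_r$ survives each step; otherwise it is cleaner to do as the paper does and simply cite Vorst.
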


From now onwards by the term ``principal ideal" we shall mean an ideal generated by a single element possibly a unit.

\begin{theorem}\label{12}
Let $A$ be an affine algebra of dimension $d$ over a perfect $C_1$ field $k$, $d \geq 2$ and $I = (a)$ a principal ideal of $A$. Let $\alpha \in \SL_{d+1}(A, I)\cap \E(A, I)$. Then $\alpha$ is isotopic to identity relative to $I$. Moreover if $A$ is nonsingular then, 
\[\SL_{d+1}(A, I) \cap \E(A, I) = \E_{d+1}(A, I).\]
\end{theorem}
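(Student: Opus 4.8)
The inclusion $\E_{d+1}(A,I)\subseteq\SL_{d+1}(A,I)\cap\E(A,I)$ is clear, so fix $\alpha\in\SL_{d+1}(A,I)\cap\E(A,I)$. The plan is to stabilise $\alpha$, write down an explicit elementary isotopy in the stable range, and destabilise it over $A[T]$, obtaining a $\gamma(T)\in\SL_{d+1}(A[T])$ with $\gamma(0)=I_{d+1}$ which hits $\alpha$ either at $T=1$ (with $\gamma(T)-I_{d+1}$ having entries in $ITA[T]$) or at $T=a$ (with $\gamma(T)-I_{d+1}$ having entries in $TA[T]$). The first version is already the desired isotopy of $\alpha$ to the identity relative to $I$. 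For the ``moreover'', use the second version: then $\gamma(T)\in\GL_{d+1}(A[T],(T))$, so if $A$ is nonsingular — hence regular and essentially of finite type over $k$ — Theorem~\ref{19} (note $d+1\geq 3$) gives $\gamma(T)\in\E_{d+1}(A[T],(T))$, i.e. $\gamma(T)=\prod_j\mu_j(T)E_{p_jq_j}(Tg_j(T))\mu_j(T)^{-1}$ with $\mu_j(T)\in\E_{d+1}(A[T])$; evaluating at $T=a$ and using $ag_j(a)\in(a)=I$ gives $\alpha=\gamma(a)\in\E_{d+1}(A,I)$.

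To build $\gamma(T)$, choose $N$ large with $\alpha\perp I_N\in\E_{d+1+N}(A,I)$ and write $\alpha\perp I_N=\prod_k\gamma_kE_{s_kt_k}(x_k)\gamma_k^{-1}$ with $x_k\in I$ and $\gamma_k\in\E_{d+1+N}(A)$. Fix $c=1$, $w_k=x_k$ (for the isotopy) or $c=a$, $x_k=ay_k$, $w_k=y_k$ (for the elementary conclusion), and set $\beta(T):=\prod_k\gamma_kE_{s_kt_k}(Tw_k)\gamma_k^{-1}$, so that $\beta(0)=I_{d+1+N}$, $\beta(c)=\alpha\perp I_N$, and $\beta(T)\in\E_{d+1+N}(A[T],L_0)$ with $L_0=ITA[T]$, resp. $L_0=TA[T]$. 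Now regard $\beta(T)$ over $A[T]$, which is affine of dimension $d+1$ over the perfect $C_1$ field $k$, and peel off identity blocks from the bottom, using throughout the ideal $L:=L_0\cap(T-c)A[T]$. At each step the last row $\w(T)$ of the current matrix is a standard basis vector at $T=0$ (the current matrix being $I$ there) and at $T=c$ (the current matrix being $\alpha\perp I_{(\mathrm{size})}$, whose bottom rows are standard vectors as long as one identity block remains), hence $\w(T)$ is congruent to that vector modulo $L$; it may therefore be reduced to it over $A[T]$ by a matrix in $\E_m(A[T],L)$ whenever the current size $m$ satisfies $m\geq d+3$ (Lemma~\ref{1} applied to $A[T]$), and by a matrix in $\SL_{d+2}(A[T],L)$ for the single step from size $d+2$ to $d+1$ (Theorem~\ref{10} applied to $A[T]$). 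Clearing the remaining column costs only elementary matrices with coefficients in $L$, and since $L\subseteq(T)A[T]$ and $L\subseteq(T-c)A[T]$ all of these reducing matrices are the identity at $T=0$ and at $T=c$. After $N$ peels one is left with $\gamma(T)\in\SL_{d+1}(A[T],L_0)$ satisfying $\gamma(0)=I_{d+1}$ and $\gamma(c)=\alpha$, as wanted. (The two choices of $c$ run verbatim; the only difference is the ambient ideal $L_0$, a point I would isolate as a small lemma.)

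The heart of the argument — and what I expect to be the only real obstacle — is precisely this destabilisation down to the critical size $d+1$. For sizes $\geq d+3$ Bass's stability (Lemma~\ref{1}) carries it out inside the relative \emph{elementary} group, but the final step, from $d+2$ to $d+1$, is available only through Theorem~\ref{10}, which is a statement about $\SL_{d+2}$ rather than $\E_{d+2}$. Consequently the destabilised $\gamma(T)$ lies only in $\SL_{d+1}(A[T],ITA[T])$, which yields nothing better than ``$\alpha$ is isotopic to the identity relative to $I$'' in general; it is the nonsingularity of $A$ — used through Theorem~\ref{19}, whose hypothesis is regularity — that upgrades this $\SL$-isotopy to membership in $\E_{d+1}(A,I)$, via the elementary factorisation of $\gamma(T)$ evaluated at $T=a$.
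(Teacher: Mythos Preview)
Your argument is correct and shares the paper's core strategy: lift $\alpha$ to an elementary isotopy over $A[T]$, destabilise by completing rows via Theorem~\ref{10} (applied to the $(d{+}1)$-dimensional affine $k$-algebra $A[T]$), and in the nonsingular case invoke Theorem~\ref{19} and evaluate at $T=a$ to land in $\E_{d+1}(A,I)$. The paper is more economical in two respects. First, rather than stabilising to an arbitrary $N$ and peeling repeatedly with Lemma~\ref{1}, it invokes classical injective stability once to place $1\perp\alpha$ directly in $\E_{d+2}(A,I)$, so a single application of Theorem~\ref{10} suffices and your intermediate peels are redundant. Second, it runs only your $c=a$ construction, obtaining $\beta(T)\in\SL_{d+1}(A[T],(T))$ with $\beta(a)=\alpha$; the isotopy relative to $I$ is then recovered as $\beta'(T)=\beta(aT)\in\SL_{d+1}(A[T],I)$, so the separate $c=1$ track is unnecessary. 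On the other hand, your explicit use of the ideal $L=L_0\cap(T-c)A[T]$ is slightly more careful than the paper's $(T^2-aT)$, which agrees with $(T)\cap(T-a)$ only when $a$ is a non-zero-divisor.
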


\begin{proof}
By classical injective stability estimate we have $\alpha \in \E_{d +2}(A, I)$. So we have an isotopy  $\gamma(T) \in \E_{d+2}(A[T], (T))$ such that $\gamma(a) = 1 \perp \alpha$ and $\gamma(0) = I_{d+2}$.  Now $e_1\gamma(T) \in \Um_{d+2}(A[T], (T^2-aT))$. So by Theorem \ref{10} the first row of $\gamma(T)$ is completable to a matrix $\gamma_1(T) \in \SL_{d+2}(A[T], (T^2-aT))$ i.e. $e_1\gamma(T) = e_1\gamma_1(T)$. Now note that $\gamma(T)\gamma_1(T)^{-1}$  is also an isotopy from $1 \perp \alpha$ to $I_{d+1}$ and it will look like
$\begin{pmatrix}
1 & 0 \\
* & \beta(T)
\end{pmatrix}$.
Clearly $\beta(T) \in \SL_{d+1}(A[T], (T))$ such that $\beta(a) = \alpha$. We define $\beta'(T) = \beta(aT)$. Then $\beta'(T) \in \SL_{d+1}(A[T], I)$ is an isotopy from $\beta'(0) = I_{d+1}$ to $\beta'(1) = \alpha$ relative to $I = (a)$.

If $A$ is nonsingular then by Theorem \ref{19} we have $\beta(T) \in \E_{d+1}(A[T], (T))$. So $\alpha = \beta(a) \in \E_{d+1}(A, I)$. Therefore $\SL_{d+1}(A, I) \cap \E(A, I) \subset \E_{d+1}(A, I)$. The reverse inclusion is obvious. 
\end{proof}

The proof of the following is verbatim that of the previous theorem. One needs to use Theorem \ref{17} in place of \ref{10}.

\begin{theorem}\label{13}
Let $A$ be a non singular affine algebra of dimension $d$ over an algebraically closed field $k$, $d \geq 3, d! \in  k \setminus \{0\}$ and $I = (a)$ a principal ideal of $A$. Then $$\SL_{d}(A, I)\cap \E(A, I) = \E_{d}(A, I).$$
\end{theorem}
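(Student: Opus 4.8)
The plan is to rerun the proof of Theorem \ref{12} with every index lowered by one: the role played there by the classical injective stability estimate is now played by Theorem \ref{12} itself, and the role played by Theorem \ref{10} is now played by Theorem \ref{17}. Since the inclusion $\E_d(A,I) \subseteq \SL_d(A,I) \cap \E(A,I)$ is obvious, it suffices to start from $\alpha \in \SL_d(A,I) \cap \E(A,I)$ and produce $\alpha \in \E_d(A,I)$.

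First I would note that $k$, being algebraically closed, is perfect and $C_1$, so Theorem \ref{12} applies to $A$ (nonsingular of dimension $d \geq 3$, with the principal ideal $I = (a)$) and gives $1 \perp \alpha \in \SL_{d+1}(A,I) \cap \E(A,I) = \E_{d+1}(A,I)$. Writing $1 \perp \alpha$ as a product of $\E_{d+1}(A)$-conjugates of elementary generators $E_{s_it_i}(ay_i)$ and replacing the scalar $a$ by an indeterminate $T$ yields an isotopy $\gamma(T) \in \E_{d+1}(A[T], (T))$ with $\gamma(0) = I_{d+1}$ and $\gamma(a) = 1 \perp \alpha$. Then $e_1\gamma(T)$ reduces to $e_1$ both modulo $(T)$ and modulo $(T - a)$, so $e_1\gamma(T) \in \Um_{d+1}(A[T], (T^2 - aT))$.

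Next I would apply Theorem \ref{17} to $B = A[T]$, a nonsingular affine $k$-algebra of dimension $d + 1$; its hypotheses read $d + 1 \geq 4$ and $\tfrac{1}{((d+1)-1)!} = \tfrac{1}{d!} \in k$, which are exactly the standing assumptions. Hence $\Um_{d+1}(B, (T^2-aT)) = e_1 \SL_{d+1}(B, (T^2-aT))$, so $e_1\gamma(T)$ can be completed to some $\gamma_1(T) \in \SL_{d+1}(A[T], (T^2 - aT))$ with $e_1\gamma_1(T) = e_1\gamma(T)$. Since $T^2 - aT$ vanishes at $T = 0$ and at $T = a$, we get $\gamma_1(0) = \gamma_1(a) = I_{d+1}$, so $\gamma(T)\gamma_1(T)^{-1}$ is again an isotopy from $I_{d+1}$ to $1 \perp \alpha$ whose first row is constantly $e_1$; therefore it has block form $\begin{pmatrix} 1 & 0 \\ * & \beta(T) \end{pmatrix}$ with $\beta(T) \in \SL_d(A[T], (T))$, $\beta(0) = I_d$ and $\beta(a) = \alpha$. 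Finally, $A$ and hence $A[T]$ being nonsingular, Theorem \ref{19} (with $r = d \geq 3$) gives $\beta(T) \in \E_d(A[T], (T))$, and setting $T = a$ gives $\alpha = \beta(a) \in \E_d(A, I)$, as required.

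The step I expect to require the most care — and the place where all the hypotheses are consumed — is the invocation of Theorem \ref{17} on the polynomial extension $A[T]$, whose dimension is $d + 1$: this is precisely what forces the dimension bound $d \geq 3$ and the divisibility $d! \in k^*$, and it is the reason the estimate cannot be pushed lower by this method. The one genuinely routine point that still needs to be verified is the ideal identity $(T) \cap (T-a) = (T^2 - aT)$ in $A[T]$ (immediate if $a = 0$ or $a$ is a unit, and valid in general when $A$ is a domain since $a \ne 0$), which is what guarantees that the completion furnished by Theorem \ref{17} is actually congruent to $e_1$ modulo $(T^2 - aT)$, hence that $\beta(T)$ is congruent to the identity modulo $(T)$ so that Theorem \ref{19} applies.
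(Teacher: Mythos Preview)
Your proof is correct and is exactly the argument the paper intends: it says the proof of Theorem~\ref{13} is ``verbatim that of the previous theorem'' with Theorem~\ref{17} used in place of Theorem~\ref{10}, and you have carried this out in full, even making explicit that Theorem~\ref{12} supplies the isotopy in $\E_{d+1}(A[T],(T))$. One tiny slip in your closing remark: when $a=0$ the ideal identity $(T)\cap(T-a)=(T^2-aT)$ actually fails, but then $I=0$ and the theorem is trivial; for $a\neq 0$ your domain argument works, and since a nonsingular affine $k$-algebra is a finite product of regular domains, the general case reduces to that one.
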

%
%


Theorems  \ref{12}, \ref{13} lead us to ask the following question.

\begin{question}\label{24}
Let $A$ be a non singular affine algebra of dimension $d$ over an algebraically closed field $k$ and $I$ an arbitrary  ideal of $A$. Then are the following true? 
\begin{itemize}

\item[1.]
$\SL_{d+1}(A, I) \cap \E(A, I) = \E_{d+1}(A, I)$ whenever $d \geq 2$. 

\item[2.] 
$\SL_{d}(A, I)\cap \E(A, I) = \E_{d}(A, I)$ whenever $d \geq 3, d! \in k \setminus 0$. 

\item[3.]
$\SL_{d -1}(A)\cap \E(A) = \E_{d -1}(A)$.

\end{itemize}
\end{question}


\section{main results}
In this section we establish the main result of this article as stated in the introduction.
\begin{lemma}\label{51}
Let $A$ be a non singular affine algebra of dimension $d$ over an algebraically closed field $k$ such that $\frac{1}{d!} \in k$ and $d \equiv 2 \vpmod 4$ and $I$  a principal ideal of $A$. Let $\v \in \Um_{d}(A, I)$. Then $\psi_{d - 1!}(\v)$ can be completed to a symplectic matrix which is congruent to identity modulo $I$.
\end{lemma}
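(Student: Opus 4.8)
The plan is to exploit the factorial row structure together with the Suslin matrix machinery, following the strategy already used in Lemma \ref{9} and Lemma \ref{eo10a}, but now keeping track of the symplectic structure relative to $I$. Write $r = d-1$, so $r \equiv 1 \vpmod 4$ since $d \equiv 2 \vpmod 4$. The factorial row $\psi_{r!}(\v)$ of length $d = r+1$ is by definition the first row of the Suslin matrix $S_r(\v, \w)$ (up to passing to $\beta_r(\v,\w)$), where $\w \in \Um_{r+1}(A,I)$ is chosen with $\langle \v, \w\rangle = 1$; and by (\cite{seh}, Corollary 4.3(iii)) we may take $\beta_r(\v,\w) \equiv I_{r+1}$ modulo $I$, while $S_r(\v,\w) \in \SL_{2^r}(A,I)$ by Lemma \ref{8}. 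Since $r \equiv 1 \vpmod 4$, the Suslin identities make $S_r(\v,\w)$ a symplectic matrix with respect to the form $J_r$, i.e. $S_r(\v,\w) \in \Sp_{J_r}(A,I)$, and conjugating by the permutation matrix $\sigma_{J_r}$ (Remark \ref{7}) turns it into an honest element of $\Sp_{2^r}(A,I)$.

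First I would reduce to the excision ring $R^* = A \oplus I$, $I^* = 0 \oplus I$, which by Proposition \ref{9a} is again a non-singular affine algebra of dimension $d$ over $k$ — the maximal spectrum of $R^*$ decomposes as in Lemma \ref{1}, so the stabilization results of Section 2 apply. Lift $\v$ to $\v^* \in \Um_{r+1}(R^*, I^*)$ and form $S = \sigma_{J_r} S_r(\v^*, \w^*) \sigma_{J_r}^{-1} \in \Sp_{2^r}(R^*, I^*)$. Because $2^r = 2^{d-1} \geq d+2$ for $d \geq 3$ (and $d \equiv 2 \vpmod 4$ forces $d \geq 6$), Corollary \ref{6} applies: there is $\varepsilon \in \ESp_{2^r}(R^*, I^*)$ with $S\varepsilon = I_{2^r - d} \perp \gamma^*$ for some $\gamma^* \in \Sp_d(R^*, I^*)$, using that $d$ is even. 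Conjugating back and absorbing an even permutation (which lies in $\E_{2^r}(\Z)$, hence poses no obstruction by the normality argument of Lemma \ref{9}), I obtain that $S_r(\v^*, \w^*)$ is, up to an element of $\ESp_{J_r}(R^*, I^*)$ and a relative elementary factor coming from Lemma \ref{lem:august15}, of the block form $I_{2^r - d} \perp \gamma^*$ with $\gamma^*$ symplectic of size $d$ and congruent to $I_d$ modulo $I^*$.

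The key remaining point is to transfer the row $\psi_{r!}(\v^*)$ — equivalently the first row of $\beta_r(\v^*,\w^*)$, which differs from that of $S_r(\v^*,\w^*)$ only by an element of $\E_{2^r}(R^*, I^*)$ by (\cite{ms}, Proposition 2.2, Corollary 2.5) — into the $d\times d$ block. Here I would use Theorem \ref{eo11}: the orbit of a unimodular row under $\E_{2^r}(R^*, I^*)$ coincides with its orbit under $\ESp_{2^r}(R^*, I^*)$, so the elementary transformations relating $\beta_r$ and $S_r$, and those realizing the block splitting, can all be replaced by elementary symplectic ones relative to $I^*$. Combined with Lemma \ref{5} (applied to bring the symplectic completion of the row into $I_{2^r-d} \perp \gamma^*$ form), this shows $\psi_{r!}(\v^*) = e_1 \delta^*$ for some $\delta^* \in \Sp_{2^r}(R^*, I^*)$ whose non-trivial part lives in the last $d$ coordinates, i.e. $\psi_{r!}(\v^*)$ (padded suitably, then truncated) is completable to $\gamma^* \in \Sp_d(R^*, I^*)$. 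Finally, apply the natural homomorphism $\mathfrak{g} : R^* \to A$, which sends $I^*$ onto $I$ and $\v^*$ to $\v$; the image $\mathfrak{g}(\gamma^*) \in \Sp_d(A, I)$ is the desired symplectic completion of $\psi_{d-1!}(\v)$, congruent to $I_d$ modulo $I$.

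The main obstacle I anticipate is the bookkeeping in the middle step: making sure the passage from $S_r(\v,\w)$ to $\beta_r(\v,\w)$, the conjugations by $\sigma_{J_r}$, and the block-splitting from Corollary \ref{6} are all compatible with the \emph{relative} structure (congruence modulo $I$) simultaneously, rather than each only up to an absolute elementary factor. This is exactly what the excision ring $R^*$ together with Lemma \ref{lem:august15} (to promote $\E_{2^r}(R^*) \cap \SL_{2^r}(R^*, I^*)$-membership to $\E_{2^r}(R^*, I^*)$-membership, and the symplectic analogue) and Theorem \ref{eo11} are designed to handle; the proof should be a careful assembly of these, essentially verbatim the argument of Lemma \ref{9} with the extra input of Theorem \ref{eo11} to stay inside the elementary symplectic orbit throughout.
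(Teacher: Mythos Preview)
There is a genuine gap. Your argument assembles the ingredients of Lemma~\ref{9} correctly and arrives at the stable statement: $S_{d-1}(\v,\w)$ is, up to relative elementary symplectic and relative elementary factors in size $2^{d-1}$, of the form $I_{2^{d-1}-d}\perp\gamma$ with $\gamma\in\Sp_d(A,I)$. Combined with $[\beta_{d-1}(\v,\w)]=[S_{d-1}(\v,\w)]$ in $K_1$, this yields exactly that $\beta_{d-1}(\v,\w)\gamma^{-1}\in\SL_d(A,I)\cap\E(A,I)$. But from here your ``transfer'' step does not work: Theorem~\ref{eo11} at size $2^{d-1}$ is a statement about orbits of length-$2^{d-1}$ rows, and tells you nothing about whether the length-$d$ row $\psi_{(d-1)!}(\v)=e_1\beta_{d-1}(\v,\w)$ is completable in $\Sp_d$. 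Your ``padding and truncation'' is not a well-defined operation on symplectic matrices, and there is no mechanism in your outline that forces a size-$2^{d-1}$ symplectic completion to be block-diagonal with the non-trivial part of size $d$.

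The missing ingredient is precisely Theorem~\ref{13}: the improved injective stability $\SL_d(A,I)\cap\E(A,I)=\E_d(A,I)$ for $I$ principal (this is where non-singularity of $A$, algebraic closedness of $k$, and $d!\in k^*$ are actually used, via Theorem~\ref{17} and Theorem~\ref{19}). With it, $\beta_{d-1}(\v,\w)\gamma^{-1}\in\E_d(A,I)$, and then a single application of Theorem~\ref{eo11} \emph{at size $d$} gives $e_1\beta_{d-1}(\v,\w)\gamma^{-1}=e_1\delta$ for some $\delta\in\ESp_d(A,I)$, whence $\psi_{(d-1)!}(\v)=e_1\delta\gamma\in e_1\Sp_d(A,I)$. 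Note also a side issue: Proposition~\ref{9a} does not assert that $A\oplus I$ is non-singular (it usually is not), so your excision-ring route cannot supply the smoothness needed for any step that genuinely requires it; the paper avoids this by applying Lemma~\ref{9} and Theorem~\ref{13} directly over $A$.
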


\begin{proof}
If $d = 2$ the result is obvious. So we assume that $d \geq 6$. 
We choose $\w \in \Um_{d}(A, I)$ such that $\langle \v, \w \rangle = 1$. By Lemma \ref{9} we have $\varepsilon_{J_{d - 1}} \in \ESp_{J_{d - 1}}(A, I)$ such that $S_{d - 1}(\v, \w)\varepsilon_{J_{d - 1}} =(I_{2^{d - 1} - d} \perp \gamma)\varepsilon$ for some $\gamma \in \Sp_{d}(A, I)$ and $\varepsilon \in \E_{2^{d -1}}(A, I)$. Therefore $S_{d - 1}(\v,\w)$ and $\gamma$ are stably elementary equivalent relative to $I$. Rao in (\cite{seh}, Corollary 4.3 (ii)) has shown that $S_{d -1}(\v, \w)$ and $\beta_{d - 1}(\v, \w)$ are also stably elementary equivalent relative to $I$. Therefore $\beta_{d -1}(\v, \w)\gamma^{-1} \in \SL_{d}(A, I)\cap \E(A, I) = \E_{d}(A, I)$ by Theorem \ref{13}. By Theorem \ref{eo11} $e_1\beta_{d -1}(\v, \w)\gamma^{-1} = e_1 \delta$ for some $\delta \in \ESp_{d}(A, I)$. Thus we have $\psi_{d -1!}(\v) = e_1\beta_{d -1}(\v, \w) = e_1 \delta \gamma \in e_1\Sp_{d}(A, I)$.
\end{proof}

\begin{theorem}\label{52}
Let $A$ be a non singular affine algebra of dimension $d$ over an algebraically closed field $k$ such that $d \equiv 2 \vpmod 4$, $\frac{1}{d!} \in k$ and $I$ an ideal of $A$. 
Let $\v \in \Um_{d}(A, I)$. Then $\v$ can be completed to a symplectic matrix which is congruent to identity modulo $I$.
\end{theorem}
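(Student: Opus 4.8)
The plan is to reduce Theorem \ref{52} to the already-proven Lemma \ref{51}, whose only extra hypothesis is that $I$ is principal. First I would dispose of the trivial case $d=2$ (where a unimodular row of length $2$ congruent to $e_1$ mod $I$ is the first row of an obvious element of $\Sp_2(A,I)=\SL_2(A,I)$) and assume $d\geq 6$. The natural device is to pass to the excision algebra: write $\v=(1+i_1,i_2,\dots,i_d)$ and let $\tilde{\v}\in\Um_d(A\oplus I,0\oplus I)$ be its lift in the sense of Remark \ref{9a1a}. By Proposition \ref{9a}, $B:=A\oplus I$ is again a non-singular (the nonsingularity needs a word — see below) affine algebra of dimension $d$ over $k$, and the quotient $\mathfrak{g}:B\to A$ has a section, with kernel $J=0\oplus I$. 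The key gain is that in $B$ the relevant ideal $J$ is \emph{principal} over $A$, or more precisely the techniques available via Lemma \ref{lem:august15} let us work with $J$ even though $J$ itself need not be principal in $B$; this is exactly the pattern already used in the proofs of Lemmas \ref{eo10a} and \ref{16}.

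Having set this up, the second step is to reduce $\tilde{\v}$ to a factorial row inside $\Um_d(B,J)$. By Theorem \ref{17} (applicable since $d\geq 4$ and $\tfrac{1}{(d-1)!}\in k$, which follows from $\tfrac{1}{d!}\in k$), there is $\varepsilon\in\E_d(B,J)$ with $\tilde{\v}\varepsilon=\psi_{d-1!}(\w)$ for some $\w\in\Um_d(B,J)$. Now I want to apply Lemma \ref{51} to $\w$ over $B$; but Lemma \ref{51} is stated for $I$ principal, whereas $J=0\oplus I$ need not be a principal ideal of $B$. The fix is to inspect the proof of Lemma \ref{51}: the only place principality of the ideal is used is in invoking Theorem \ref{13}, and Theorem \ref{13}'s principality hypothesis is itself only used to run the isotopy/Quillen–Suslin argument of Theorem \ref{12}. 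For the excision ring $B=A\oplus I$ with $J=0\oplus I$, the retraction $\mathfrak{g}$ together with Lemma \ref{lem:august15} already gives $\E_d(B,J)=\E_d(B)\cap\SL_d(B,J)$, so the relative statement $\SL_d(B,J)\cap\E(B,J)=\E_d(B,J)$ can be obtained from the \emph{absolute} statement $\SL_d(B)\cap\E(B)=\E_d(B)$ of Theorem \ref{13} (which needs no principality) by the standard retraction argument. Hence Lemma \ref{51} applies over $B$ to $\w$: $\psi_{d-1!}(\w)$ is the first row of some $\tilde{\gamma}\in\Sp_d(B,J)$.

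The third step is descent. Chaining the two, $\tilde{\v}=e_1\tilde{\gamma}'\,\varepsilon^{-1}$ with $\tilde{\gamma}'\in\Sp_d(B,J)$ obtained from $\tilde{\gamma}$ after the elementary change of the factorial row (using Theorem \ref{eo11} to convert the $\E_d$-action back into $\ESp_d$-action, exactly as in the last line of Lemma \ref{51}), we get $\tilde{\v}\in e_1\Sp_d(B,J)$. Applying the homomorphism $\mathfrak{g}:B\to A$, which sends $\tilde{\v}$ to $\v$ and $\Sp_d(B,J)$ into $\Sp_d(A,I)$, yields $\v\in e_1\Sp_d(A,I)$, i.e. $\v$ is the first row of a symplectic matrix congruent to $I_d$ modulo $I$, as desired. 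Concretely it may be cleaner to phrase the whole argument as: ``the proof of Lemma \ref{51} goes through verbatim once one replaces the appeal to Theorem \ref{13} by its excision-ring consequence, and this removes the hypothesis that $I$ be principal'' — then the excision step is the only new ingredient.

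I expect the main obstacle to be the bookkeeping around principality: making sure that every result quoted inside Lemma \ref{51} which assumes a principal ideal (namely Theorem \ref{13}, hence Theorem \ref{12}, hence Theorem \ref{19} and Theorem \ref{10}) either (a) does not actually need principality in the absolute form, or (b) is being applied to the excision ring $B=A\oplus I$ with ideal $J=0\oplus I$ in a situation where the retraction $\mathfrak{g}$ plus Lemma \ref{lem:august15} reduces the relative claim to an absolute one. A secondary point to check is that $B=A\oplus I$ inherits non-singularity (it is a fibre product of two copies of $A$ along $A/I$, integral over $A$; one should cite or argue that it is still regular, or arrange the argument so that only properties preserved under the excision construction are used — e.g. Theorem \ref{19} needs $B$ regular essentially of finite type, and here one genuinely needs the regularity of the fibre product, which is where a short lemma or a reference to Proposition \ref{9a} combined with standard facts is needed). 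Once these verifications are in place the descent is routine.
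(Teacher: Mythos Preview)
Your excision approach has a genuine gap, and it is precisely the ``secondary point'' you flagged but did not resolve: the ring $B=A\oplus I\cong A\times_{A/I}A$ is \emph{not} nonsingular in general. Gluing two copies of a smooth variety along a closed subscheme of codimension $\geq 2$ produces a singularity along the glued locus (already for $A=k[x,y]$, $I=(x,y)$ the fibre product has embedding dimension $4$ at the origin). Proposition~\ref{9a} only gives that $B$ is affine of the same dimension and integral over $A$; it says nothing about regularity. Consequently you cannot invoke Theorem~\ref{17} (which uses Swan--Bertini and Lemma~\ref{15}, both needing smoothness), nor Theorem~\ref{19}/Theorem~\ref{13} (which need $A$ regular), over $B$. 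Your suggested fallback --- deducing the relative statement from the absolute one via the retraction and Lemma~\ref{lem:august15} --- does not help, because the absolute Theorem~\ref{13} for $B$ still requires $B$ to be nonsingular.

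The paper sidesteps this entirely with a one-line trick that reduces to a \emph{principal} ideal inside $A$ itself, so that Lemma~\ref{51} applies directly with no change of ring. Writing $\v=(1-i_1,i_2,\dots,i_d)$, subtract $i_j$ times the first coordinate from the $j$-th coordinate (for $j\geq 2$); these are operations in $\E_d(I)$, hence by Theorem~\ref{eo11} one gets
\[
\v\ \sim_{\ESp_d(A,I)}\ (1-i_1,\,i_1i_2,\,\dots,\,i_1i_d)\ \in\ \Um_d\bigl(A,(i_1)\bigr).
\]
Now apply Lemma~\ref{51} with the principal ideal $(i_1)$: the right-hand row is $e_1\gamma$ for some $\gamma\in\Sp_d(A,(i_1))\subset\Sp_d(A,I)$, and composing with the $\ESp_d(A,I)$ element gives $\v\in e_1\Sp_d(A,I)$. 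No excision, no regularity of an auxiliary ring, and the only nontrivial inputs are Theorem~\ref{17}, Theorem~\ref{eo11} and Lemma~\ref{51}, all over the original smooth $A$.
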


\begin{proof}
We only need to consider the case $d \geq 6$. Note that if $\v = (1 - i_1, i_2, \ldots, i_d) \in \Um_d(A, I)$ then $\v$ is $\ESp_d(A, I)$ equivalent to $(1 - i_1,  i_1i_2, i_1i_3, \ldots, i_1i_d) \in \Um_d(A, I)$. So without loss of generality we may assume that $I$ is principal.

 By  Theorem \ref{17}, $\v$ is elementarily  equivalent to $\psi_{d - 1!}(\v')$ relative to $I$ for some $\v' \in \Um_{d}(A, I)$. Using  Theorem \ref{eo11} we have $\v \varepsilon = \psi_{d -1!}(\v)$ for some $\varepsilon \in \ESp_{d}(A, I)$. Now the result follows by Lemma \ref{51}.
\end{proof}

The proof of the following will  follow verbatim that of (\cite{glgr}, Theorem 3.3) in the linear case. One may also see (\cite{isk1}, Theorem 3.8) for more details.

\begin{theorem}\label{49}
Let $A$ be a regular ring essentially of finite type over a field $k$. Then $$\Sp_{2r}(A[X], (X)) = \ESp_{2r}(A[X], (X)), r \geq 2.$$ 
\end{theorem}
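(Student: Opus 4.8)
Since every elementary symplectic generator $se_{ij}(Xf)$, $f\in A[X]$, is symplectic and congruent to $I_{2r}$ modulo $(X)$, the inclusion $\ESp_{2r}(A[X],(X))\subseteq\Sp_{2r}(A[X],(X))$ is immediate, and only the reverse inclusion requires proof. The plan is to transcribe, step for step, the proof of the linear statement (\cite{glgr}, Theorem 3.3)---reproduced here as Theorem \ref{19}---replacing $\GL_{r}/\E_{r}$ by $\Sp_{2r}/\ESp_{2r}$ throughout; the ring-theoretic core of that argument (Lindel's theorem together with Popescu desingularization and Quillen patching) is insensitive to the choice of classical group, so the only points needing separate attention are the group-theoretic inputs: the normality of $\ESp_{2r}(R)$ in $\Sp_{2r}(R)$ for $r\ge 2$, and a dilation principle for $\ESp_{2r}$ relative to an ideal.

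First I would establish the symplectic local--global principle: if $\alpha(X)\in\Sp_{2r}(A[X],(X))$, $r\ge 2$, satisfies $\alpha(X)_{\mathfrak m}\in\ESp_{2r}(A_{\mathfrak m}[X],(X))$ for every maximal ideal $\mathfrak m$ of $A$, then $\alpha(X)\in\ESp_{2r}(A[X],(X))$. The proof is the standard Quillen patching argument, run exactly as in Proposition \ref{eo7}: starting from a local factorization one passes to a factorization over a principal localization $A_s$, applies the dilation principle of Corollary \ref{eo6} (which is precisely the matrix-level dilation statement needed) to produce, after the substitution $X\mapsto s^{N}X$, a factorization defined over $A[X]$, and then uses the normality of $\ESp_{2r}$ in $\Sp_{2r}$ to glue finitely many such factorizations coming from a finite cover of $\mathrm{Spec}\,A$. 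This reduces the theorem to the case in which $A$ is a regular local ring essentially of finite type over $k$.

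For such $A$, I would run the Lindel--Popescu argument verbatim as in the linear proof: up to a faithfully flat base change handling an inseparable residue field and a Popescu limiting step, one reduces to $A$ essentially \'etale over a localization $R_{0}=k[Y_{1},\dots,Y_{d}]_{\mathfrak p}$ of a polynomial ring, and the Quillen--Lindel patching diagram then identifies the class of $\alpha(X)$ in $\Sp_{2r}(A[X],(X))/\ESp_{2r}(A[X],(X))$ with one over $R_{0}[X]$. Over $R_{0}$ one invokes homotopy invariance of symplectic $K_{1}$ over regular rings, together with the symplectic analogue of Suslin's $K_{1}$-version of Serre's conjecture due to Kopeiko, giving $\Sp_{2r}(k[Y_{1},\dots,Y_{d}])=\ESp_{2r}(k[Y_{1},\dots,Y_{d}])$ for $r\ge 2$ and hence $\Sp_{2r}(R_{0}[X],(X))=\ESp_{2r}(R_{0}[X],(X))$ after the usual relativization; tracing back gives $\alpha(X)_{\mathfrak m}\in\ESp_{2r}(A_{\mathfrak m}[X],(X))$ for all $\mathfrak m$, and the local--global principle of the previous paragraph finishes the proof. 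The main obstacle is the first step: one must have in hand the normality of $\ESp_{2r}(R)$ in $\Sp_{2r}(R)$ for $r\ge 2$ and the relative dilation principle for $\ESp_{2r}$; granting these, the Lindel--Popescu machinery transcribes from the linear case without change, as also indicated in (\cite{isk1}, Theorem 3.8).
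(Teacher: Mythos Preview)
Your proposal is correct and follows exactly the approach the paper intends: the paper gives no independent argument but simply declares that the proof ``will follow verbatim that of (\cite{glgr}, Theorem 3.3) in the linear case'' and points to (\cite{isk1}, Theorem 3.8) for details, which is precisely the transcription you outline, with the symplectic dilation principle (Corollary \ref{eo6}), normality of $\ESp_{2r}$ in $\Sp_{2r}$, and Kopeiko's polynomial result supplying the group-theoretic inputs. The only minor remark is that Vorst's original argument for rings essentially of finite type over a field uses Lindel's lemma directly rather than Popescu desingularization, so the Popescu step you mention is not needed in this generality, though it does no harm.
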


\begin{theorem}\label{53}
Let $A$ be a nonsingular affine algebra of dimension $d$ over an algebraically closed field $k$ and $I = (a)$ a principal ideal. 
Assume $(d+1)! \in k^* = k \setminus \{0\}$, $d \equiv 1 \vpmod 4$.
Then
$$\Sp_{d-1}(A, I) \cap \ESp_{d+1}(A, I) = \ESp_{d-1}(A, I).$$
\end{theorem}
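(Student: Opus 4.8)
The plan is to mimic the proof structure of Theorem \ref{12}: pass from an element $\alpha \in \Sp_{d-1}(A,I) \cap \ESp_{d+1}(A,I)$ to an isotopy in $\ESp_{d+1}(A[T],(T))$, peel off a $2\times 2$ block by completing its first row to a symplectic matrix over the polynomial ring, and then invoke the Lindel--Popescu type result (Theorem \ref{49}) to conclude that the remaining $\Sp_{d-1}$-valued isotopy is already $\ESp_{d-1}$-valued, hence so is $\alpha$. Concretely, since $\alpha \in \ESp_{d+1}(A,I)$ we get an isotopy $\gamma(T) \in \ESp_{d+1}(A[T],(T))$ with $\gamma(a) = I_2 \perp \alpha$ and $\gamma(0) = I_{d+1}$ (using that $\ESp_{d+1}(A,I)$ is generated by conjugates $\varepsilon\, se_{ij}(x)\,\varepsilon^{-1}$, $x\in I$, and replacing each $x = a y$ by $Ty$). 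The first row $e_1 \gamma(T) \in \Um_{d+1}(A[T], (T^2 - aT))$.

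Next I would complete that first row to a symplectic matrix over $A[T]$ congruent to identity modulo $(T^2-aT)$: here is where Theorem \ref{52} (or rather its relative-principal-ideal refinement, applied to the ring $A[T]$ with respect to the ideal $(T^2-aT)$) should be used — but note $A[T]$ has dimension $d+1$, so to complete a unimodular row of length $d+1 = \dim A[T]$ to a symplectic matrix I need $d+1 \equiv 2 \pmod 4$, i.e. $d \equiv 1 \pmod 4$, which is exactly the hypothesis, and I need $(d+1)! \in k^*$, also exactly the hypothesis. Thus $e_1\gamma(T) = e_1 \gamma_1(T)$ for some $\gamma_1(T) \in \Sp_{d+1}(A[T], (T^2-aT))$. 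Then $\gamma(T)\gamma_1(T)^{-1}$ has first row $e_1$, so by Lemma \ref{rcm} it equals $I_2 \perp \beta(T)$ with $\beta(T) \in \Sp_{d-1}(A[T],(T))$ and $\beta(a) = \alpha$.

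Now apply Theorem \ref{49} to the regular ring $A$: $\beta(T) \in \Sp_{d-1}(A[T],(T)) = \ESp_{d-1}(A[T],(T))$ (valid since $\tfrac{d-1}{2} \geq 2$, i.e. $d \geq 5$; the small cases $d=1$ must be handled separately — there $\Sp_0$ is trivial and the statement is vacuous, or one dismisses them directly). Specializing $T \mapsto a$ gives $\alpha = \beta(a) \in \ESp_{d-1}(A, I)$, which proves $\Sp_{d-1}(A,I) \cap \ESp_{d+1}(A,I) \subseteq \ESp_{d-1}(A,I)$; the reverse inclusion $\ESp_{d-1}(A,I) \subseteq \Sp_{d-1}(A,I) \cap \ESp_{d+1}(A,I)$ is clear from the standard embeddings $\ESp_{d-1} \hookrightarrow \ESp_{d+1}$ and $\ESp_{d-1} \subseteq \Sp_{d-1}$.

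The main obstacle I anticipate is the completion step: Theorem \ref{52} as stated applies to affine algebras over an algebraically closed field, but $A[T]$ is not finite-dimensional as such an algebra in the relevant sense unless one is careful — one really wants the relative statement over $A[T]$ with respect to the (non-principal, but handled via the excision trick in the proof of Theorem \ref{52}) ideal $(T^2-aT)$, and one must check the dimension count and divisibility hypotheses transfer. A secondary subtlety is ensuring $\gamma_1(T)$ can be taken congruent to $I_{d+1}$ modulo $(T^2-aT)$ and not merely modulo $(T)$, which is where using the ideal $(T^2-aT)$ (rather than $(T)$) in the completion is essential so that specialization at $T=a$ lands in $\Sp_{d-1}(A,I)$ with $I=(a)$; this mirrors exactly the bookkeeping in Theorem \ref{12}. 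Finally one should double-check that $\ESp_{d+1}(A,I)$ really does lift to an isotopy in $\ESp_{d+1}(A[T],(T))$ specializing correctly — this is the symplectic analogue of the classical fact used in Theorem \ref{12} and follows from the definition of the relative elementary symplectic group as a normal closure.
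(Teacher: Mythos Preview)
Your proposal is correct and follows essentially the same route as the paper's proof: construct an isotopy in $\ESp_{d+1}(A[T])$, complete its first row to a relative symplectic matrix via Theorem \ref{52} applied to the $(d+1)$-dimensional nonsingular affine $k$-algebra $A[T]$ with ideal $(T^2-aT)$, extract the lower $(d-1)\times(d-1)$ symplectic block, and finish with Theorem \ref{49}. One small inaccuracy: Lemma \ref{rcm} only gives that the second column of $\gamma(T)\gamma_1(T)^{-1}$ is $e_2^t$, not that the matrix is literally $I_2 \perp \beta(T)$ --- the paper records the off-diagonal $*$ entries explicitly --- but the bottom-right block is nonetheless in $\Sp_{d-1}(A[T],(T))$ and specializes to $\alpha$ at $T=a$, so your argument goes through unchanged.
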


\begin{proof} 

Let $ \sigma \in \Sp_{d-1}(A, I) \cap \ESp_{d+1}(A, I)$. Let $\delta(T) \in \ESp_{d + 1}(A[T])$ be an elementary symplectic isotopy between  $\delta(a) = I_2 \perp \sigma$ and $\delta(0) = I_{d + 1}$. Then $\v(T) = e_1\delta(T) \in \Um_{d+1}(A[T], T^2 - aT)$. By previous Theorem \ref{52}, $\v(T)$ can be completed to a symplectic matrix $\alpha(T)$ which is congruent to identity modulo $(T^2 - aT)$. So we have $e_1\delta(T) = e_1 \alpha(T)$. Now $\delta(T)\alpha(T)^{-1}$ is also an isotopy between $I_{d + 1}$ and $I_2 \perp \sigma$. We have
$$\delta(T)\alpha(T)^{-1} = 
\begin{pmatrix}
1 & 0 & 0\\
* & 1 & *\\
* & 0 & \eta(T)
\end{pmatrix}$$
for some $\eta(T) \in \Sp_{d-1}(A[T], (T))$. Clearly $\eta(T)$ is a symplectic isotopy between $\eta(0) = I_{d-1}$ and $\eta(a) = \sigma$. Since $A$ is nonsingular 
by Theorem \ref{49} 
 $\eta(T) \in \ESp_{d-1}(A[T], (T))$. So $\sigma = \eta(a) \in \ESp_{d-1}(A, I)$. Therefore $\Sp_{d-1}(A, I) \cap \ESp_{d+1}(A, I) \subset \ESp_{d-1}(A, I)$. The reverse inclusion is trivial.
\end{proof}
 
Theorems \ref{52} and \ref{53} lead us to ask the following question.

\begin{question}\label{54}
Let $A$ be a non singular affine algebra of dimension $d$ over an algebraically closed field $k$ and $I$ an ideal of $A$. Then are the following true in general?
\begin{itemize}
\item[1]
If $d! \in k \setminus \{0\}$, then $\Um_d(A, I) = e_1\Sp_d(A, I)$.
\item[2]
If $(d+1)! \in  k \setminus \{0\}$, then $\Sp_{d-1}(A, I) \cap \ESp_{d+1}(A, I) = \ESp_{d-1}(A, I)$.
\end{itemize}
\end{question}


\section{an example}
Let $A$ be an affine algebra over a field $k$. Let $\v = (v_0, v_1, \ldots, v_n), \w = (w_0, w_1, \ldots, w_n)$ $\in \Um_{n+1}(A)$, $n = 2k +1 \geq 3$ be such that $\sum_{i = 0}^n v_iw_i = 1$. We can complete $\v$ to an elementary matrix in $\E_{n+1}(A_{v_0})$ and therefore to a matrix in $\ESp_{n+1}(A_{v_0})$ by Theorem \ref{eo11}. Let $\varepsilon_1 \in \ESp_{n+1}(A_{v_0})$ be such that $e_1 \varepsilon_1 =  \v$.  Since $ \v \w^t = 1$, we have $\varepsilon_1 \w^t = (1, *, *, \ldots, *)^t$. So we can suitably modify $\varepsilon_1$ by elementary symplectic action (multiplication by $se_{i1}(*) , 1 < i$ from the left) retaining its first row and further assume that $\varepsilon_1 \w^t = e_1^t$.

Since $v_0$ is in the Jacobson radical of $A_{1 + v_0A}$, we can also complete $\v$ to an elementary matrix in $\E_{n+1}(A_{1 + v_0A})$. By  Theorem \ref{eo11} we shall have $\varepsilon_2 \in \ESp_{n+1}(A_{1 + v_0A})$ such that $e_1 \varepsilon_2 =  \v$. After a suitable modification, if necessary, as described earlier we  assume that $\varepsilon_2 \w^t = e_1^t$.

Now $\varepsilon_1 \varepsilon_2^{-1} = \ESp_{n+1}(A_{v_0(1 + v_0A)})$ such that $e_1\varepsilon_1 \varepsilon_2^{-1}= e_1$ and $\varepsilon_1 \varepsilon_2^{-1}e_1^t = e_1^t$. So by Lemma \ref{rcm}  $\varepsilon_1 \varepsilon_2^{-1}$ will look like $I_2 \perp \gamma$ for some $\gamma \in \Sp_{n-1}(A_{v_0(1 + v_0A)}) \cap \ESp_{n+1}(A_{v_0(1 + v_0A)})$.

If $\v$ is not completable then $\gamma$ must not split into two matrices over $A_{v_0}$ and $A_{(1 + v_0A)}$. Therefore $\gamma$ is not contained in $\E_{n-1}(A_{v_0(1 + v_0A)})$. The following will follow from (\cite{sfmk}, Claim 4, 3rd paragraph of page no 1443).

\begin{proposition}\label{61} Let $p$ be any prime number. Then the following assertions hold true.
\begin{enumerate} 
\item
There exists an affine domain $A$  of dimension $p+1$ over a $C_1$ field $k$  and  $\v = (v_0, v_1, \ldots, v_p) \in \Um_{p+1}(A)$ which is not completable.
\item
There exists an affine domain of dimension $p+2$ over an algebraically closed field $k$ and a unimodular row $\v = (v_0, v_1, \ldots, v_p) \in \Um_{p+1}(A)$ which is not completable.  
\end{enumerate}
\end{proposition}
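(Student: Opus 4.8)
\medskip

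\noindent\emph{Proof proposal.} Both assertions are recollections of Mohan Kumar's construction, and the plan is to extract them from the analysis culminating in Claim~4 of \cite{sfmk}, which I would invoke as a black box rather than reprove. For the fixed prime $p$, that construction produces a smooth (hence, after passing to a connected component, integral) affine domain $A$ carrying a unimodular row $\v=(v_0,\dots,v_p)\in\Um_{p+1}(A)$ for which the associated rank $p$ stably free module $P=\ker(\v\colon A^{p+1}\to A)$ is not free; the non-freeness of $P$ --- equivalently, the non-completability of $\v$ --- is detected by a non-vanishing torsion obstruction of order $p$ living in the \'etale cohomology of the ambient variety, and this non-vanishing is exactly the content of Claim~4. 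Given the construction, what remains for part~(2) is bookkeeping: check that the ambient algebra can be taken to be a domain, that its Krull dimension is exactly $p+2$, and that the displayed row of length $p+1$ is the one treated by Claim~4.

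For part~(1), the relevant feature is that the sharp form of this phenomenon over a field of cohomological dimension $\le 1$ needs one less ambient dimension. I would either run Claim~4 directly over a $C_1$ field $K$ --- for instance $K=\overline{k}(C)$, the function field of a smooth affine curve over an algebraically closed field, which is $C_1$ by Tsen's theorem --- obtaining an affine $K$-domain $B$ of dimension $p+1$ with a non-completable row $\v\in\Um_{p+1}(B)$; or, matching more closely the likely organisation of \cite{sfmk}, prove this $(p+1)$-dimensional example over $K$ first and then \emph{deduce} part~(2) from it by spreading out: realise $B$ as the generic fibre of a flat family $\mathcal{A}\to U$ over an affine open $U\subseteq C$, so that $\mathcal{A}$ is an affine domain of dimension $(p+1)+1=p+2$ over $\overline{k}$, and observe that $\v$ spreads to a unimodular row over $\mathcal{A}$ which cannot be completable, since a completion over $\mathcal{A}$ would restrict to one over the generic fibre $B$, contradicting part~(1). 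Here one shrinks $U$ so as to preserve flatness, irreducibility of the generic fibre, and unimodularity of the row.

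The only genuinely hard input is Claim~4 of \cite{sfmk} itself --- the non-vanishing of the class obstructing the completion of $\v$ --- and I would simply quote it. The one step that merits care in the write-up is the generic-fibre/spreading-out passage between a $C_1$ base field and an algebraically closed one: one must check that the generic fibre stays an integral domain and that the row stays unimodular over a single affine open, and that the dimension of the total space comes out to $p+2$ rather than $p+1$. The remaining numerology --- relating the length of the row, the ambient dimension, and the rank of $P$ --- and the reduction to a domain are routine.
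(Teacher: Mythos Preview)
Your proposal is correct and matches the paper's approach: the paper gives no argument at all beyond the bare citation ``follows from \cite{sfmk}, Claim~4, 3rd paragraph of page~1443,'' so both you and the author treat Mohan Kumar's construction as a black box. Your elaboration on the bookkeeping (domain, dimension count, length of the row) and the spreading-out passage from the $C_1$ case to the algebraically closed case is extra detail the paper omits, but it is a sound way to organise the extraction and would make the citation more transparent.
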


  We shall choose $\v \in \Um_{p+1}(A)$ given by the first assertion and assume $n = p \geq 3$ in the above discussion. Let $B$ be obtained from $A$ by inverting all non zero polynomials in $k[v_0]$. Then $B$ is an affine domain of dimension $p$ over the $C_2$ field $k(v_0)$ and $A_{v_0(1 + v_0A)}$ is a further localization of $B$. So we have  $g \in B$ such that $\gamma \in \Sp_{p-1}(B_g) \cap \ESp_{p+1}(B_g)$. Note that $B_g$ is also an affine domain of dimension $p$ over the $C_2$ field $k(v_0)$. Also $\gamma$ is not even elementary as the unimodular row  is not completable.  Therefore Theorem \ref{53} is not true if the ground field is a $C_2$ field.

Now we shall choose the $\v \in \Um_{p+1}(A)$ given by the second assertion and $n=p \geq 3, p = 3 \vpmod 4$ in the  discussion preceding Proposition \ref{61}. We  have $\gamma \in \Sp_{p-1}(A_g) \cap \ESp_{p+1}(A_g)$ but $\gamma \not \in \ESp_{p-1}(A_g)$ for some $g \in A$. Note that $A_g$ is an affine domain of dimension $d = p +2, d = 1 \vpmod 4$. So $\gamma \in \Sp_{d-3}(A_g) \cap \ESp_{d-1}(A_g)$ but $\gamma \not \in \ESp_{d-3}(A_g)$. Therefore our estimate in Theorem \ref{53} is the best possible.

%
%

\begin{acknowledgement}
This paper is a part of my PhD thesis. I am sincerely grateful to my advisor Ravi A.Rao for asking me the above problem and answering patiently my innumerable questions raised in my mind during the preparation of this article. I am also thankful to SPM Fellowship, CSIR,  (SPM - 07 - /858(0051)/ 2008 - EMR - 1) for the financial support.  

\end{acknowledgement}

\end{document}